\newtheorem{lem}{Lemma}[section]
\newtheorem{defi}[lem]{Definition}
\newtheorem{theo}[lem]{Theorem}
\newtheorem{cor}[lem]{Corollary}
\newtheorem{rem}[lem]{Remark}
\newtheorem{prop}[lem]{Proposition}
\newcommand{\Hmmc}[2]{\leavevmode{\marginpar{\tiny%
\color{#1}$\hbox to 0mm{\hspace*{-0.5mm}$\leftarrow$\hss}%
\vcenter{\vrule depth 0.1mm height 0.1mm width \the\marginparwidth}%
\hbox to
0mm{\hss$\rightarrow$\hspace*{-0.5mm}}$\\\relax\raggedright #2}}}
\newcommand{\Hmm}[1]{\Hmmc{blue}{#1}}
\newcommand{\Hmmo}[1]{\Hmmc{red}{#1}}
\def\cred{\color{red}}
\def\cblack{\color{black}}
\def\ol{\overline}
\def\incl{\hookrightarrow}
\def\reals{\mathbb{R}}
\def\nat{\mathbb{N}}
\def\ga{\Gamma}
\def\gap{\ga_{\Phi}}
\def\gat{\ga_{\!t}}
\def\gan{\ga_{\!n}}
\def\gatp{\ga_{\!t,\Phi}}
\def\gatpchi{\ga_{\!t,\Phi_{\chi}}}
\def\ganp{\ga_{\!n,\Phi}}
\def\ganpchi{\ga_{\!n,\Phi_{\chi}}}
\def\om{\Omega}
\def\omp{\om_{\Phi}}
\def\omb{\ol{\om}}
\def\ombp{\omb_{\Phi}}
\def\n{\mathrm{n}}
\def\mcX{\mathcal{X}}
\def\sfL{\mathsf{L}}
\def\sfH{\mathsf{H}}
\def\sfC{\mathsf{C}}
\def\sfR{\mathsf{R}}
\def\sfD{\mathsf{D}}
\renewcommand{\L}{\sfL}
\renewcommand{\H}{\sfH}
\newcommand{\C}{\sfC}
\newcommand{\R}{\sfR}
\newcommand{\D}{\sfD}
\newcommand{\V}{\mathsf{V}}
\newcommand{\Pc}{\Phi_\chi}
\newcommand{\eps}{\varepsilon}
\DeclareMathOperator{\A}{A}
\DeclareMathOperator{\cA}{\mathcal{A}}
\DeclareMathOperator{\cL}{\mathcal{L}}
\DeclareMathOperator{\p}{\partial}
\DeclareMathOperator{\id}{id}
\DeclareMathOperator{\adj}{adj}
\DeclareMathOperator{\sym}{sym}
\DeclareMathOperator{\tr}{tr}
\DeclareMathOperator{\TAc}{\mathcal{T}_{\chi}}
\DeclareMathOperator{\Tc}{T_{\chi}}
\DeclareMathOperator{\TAtc}{\mathcal{T}_{\bar\chi}}
\DeclareMathOperator{\Ttc}{T_{\bar\chi}}
\DeclareMathOperator{\Jc}{\mathcal{I}_{\chi}}
\DeclareMathOperator{\Jtc}{\mathcal{I}_{\bar\chi}}
\DeclareMathOperator{\symtr}{symtr}
\DeclareMathOperator{\na}{\nabla}
\DeclareMathOperator{\rot}{rot}
\DeclareMathOperator{\divergence}{div}
\def\div{\divergence}
\newcommand{\wt}[1]{\widetilde{#1}}
\newcommand{\wh}[1]{\widehat{#1}}
\newcommand{\norm}[1]{|#1|}
\newcommand{\scp}[2]{\langle#1,#2\rangle}
\newcommand{\bscp}[2]{\big\langle#1,#2\big\rangle}
\newcommand{\Bscp}[2]{\Big\langle#1,#2\Big\rangle}
\title[Shape Derivatives of the  Eigenvalues of the de Rham Complex: Part II]
{Shape Derivatives\\ of the  Eigenvalues of the de Rham Complex\\  
for Lipschitz deformations and variable coefficients:\\
Part II}
\author{Pier Domenico Lamberti}
\author{Dirk Pauly}
\author{Michele Zaccaron}
\address{Dipartimento di Tecnica e Gestione dei Sistemi Industriali, University of Padova, Italy}
\email[Pier Domenico Lamberti]{lamberti@math.unipd.it}
\address{Institut f\"ur Analysis, Technische Universit\"at Dresden, Germany}
\email[Dirk Pauly]{dirk.pauly@tu-dresden.de}
\address{Aix Marseille Univ , CNRS, Centrale Marseille, Institut Fresnel, Marseille, France}
\email[Michele Zaccaron]{michele.zaccaron@univ-amu.fr}
\keywords{}
\subjclass{}
\date{\today}
\thanks{{\it Corresponding Author}: Michele Zaccaron}
\begin{document}

%%%%%%%%%%%%%%%%%%%%%%%%%%%%%%%%%%%%%%%%%%%%%%%%%%%%%%%%%%%%%%%%%%%%%%%%%%%%%%%%%%%%%%%%%%%%%%%%%%%%%

\begin{abstract}

In this second part of our series of papers, we develop an abstract
framework suitable for de Rham complexes that depend on a parameter
belonging to an arbitrary Banach space. Our primary focus is on spectral
perturbation problems and the differentiability of eigenvalues with
respect to perturbations of the involved parameters.

As a byproduct, we provide a proof of the celebrated Hellmann-Feynman
theorem for both simple and multiple eigenvalues of suitable families of
self-adjoint operators in Hilbert spaces, even when these operators
depend on possibly infinite-dimensional parameters.

We then apply this abstract machinery to the de Rham complex in three
dimensions, considering mixed boundary conditions and non-constant
coefficients. In particular, we derive Hadamard-type formulas for
Maxwell and Helmholtz eigenvalues. First, we compute the derivatives
under minimal regularity assumptions—specifically, Lipschitz
regularity—on both the domain and the perturbation, expressing the
results in terms of volume integrals. Second, under more regularity
assumptions on the domains, we reformulate these formulas in terms of
surface integrals.

%We study the eigenvalue problem for the De Rham complex on varying three dimensional domains. Our analysis includes the Helmholtz equation and the Maxwell system with mixed boundary conditions and non-constant coefficients. We provide Hadamard-type formulas for the shape derivatives under weak regularity assumptions on the domains and their perturbations. Proofs are based on abstract results adapted to varying Hilbert complexes. As a bypass product of our analysis we give a proof of the celebrated Hellmann-Feynman theorem both for simple and multiple eigenvalues of suitable families of self-adjoint operators in Hilbert space depending on possibly infinite dimensional parameters. 
\end{abstract}

%%%%%%%%%%%%%%%%%%%%%%%%%%%%%%%%%%%%%%%%%%%%%%%%%%%%%%%%%%%%%%%%%%%%%%%%%%%%%%%%%%%%%%%%%%%%%%%%%%%%%

\maketitle
\setcounter{tocdepth}{3}
{\small
\tableofcontents}

%%%%%%%%%%%%%%%%%%%%%%%%%%%%%%%%%%%%%%%%%%%%%%%%%%%%%%%%%%%%%%%%%%%%%%%%%%%%%%%%%%%%%%%%%%%%%%%%%%%%%

\section{Introduction}

We continue our analysis from the first part
where we have discussed the eigenvalue problem for the de Rham complex 
on variable (transplanted) domains in a general setting.

We use our notations from the first part.
Throughout this paper, let $(\om,\gat)$ be a bounded weak Lipschitz pair, where $\Omega$ is a bounded open set in $\mathbb{R}^3$ with 
boundary $\Gamma$. Moreover, let  $\gan :=\Gamma \setminus \overline\gat$ and  $\nu$, $\eps$, $\mu$, be admissible coefficients.
These assumptions will be sharpened later. 

In our first part we have shown that the de Rahm complex has countably many eigenvalues 
\begin{align}
\label{eq:intro:eigenval1}
0<\lambda_{\ell,\Phi,1}\leq\lambda_{\ell,\Phi,2}\leq\dots
\leq\lambda_{\ell,\Phi,k-1}\leq\lambda_{\ell,\Phi,k}\leq\dots\to\infty,\qquad
\ell\in\{0,1\}.
\end{align}
Here $\Phi:\om\to\Phi(\om)=\omp$ is a bi-Lipschitz transformation that 
models the variations of the domain $\om$,
and the indices $0$ and $1$ indicate Helmholtz and Maxwell eigenvalues, respectively. More precisely,  $\lambda_{0,\Phi,k}$ are the eigenvalues of the boundary value problem
\begin{equation}\label{helmprobl}
\begin{aligned}
-\nu^{-1}\div\eps\na u&=\lambda_{0}u
&\text{in }&\omp,\\
u&=0
&\text{on }&\gatp,\\
\n\cdot\eps\na u&=0
&\text{on }&\ganp ,
\end{aligned}
\end{equation}
and   $\lambda_{1,\Phi,k}$ are the eigenvalues of 
\begin{align}\label{maxprobl}
\begin{aligned}
\eps^{-1}\rot\mu^{-1}\rot E&=\lambda_{1}E
&\text{in }&\omp,\\
\n\times E&=0
&\text{on }&\gatp,\\
\n\times\mu^{-1}\rot E&=0
&\text{on }&\ganp .
\end{aligned}
\end{align}

Here 
$$\gap:=\Phi(\ga),\qquad
\gatp:=\Phi(\gat),\qquad
\ganp:=\Phi(\gan)$$
and $(\omp, \gatp)$ is also a weak Lipschitz pair.

Concerning the transformation $\Phi$, we recall that 
$\Phi\in\C^{0,1}(\omb,\ombp)$ and $\Phi^{-1}\in\C^{0,1}(\ombp,\omb)$ with\footnote{Note that the Jacobian determinant of a bi-Lipschitz diffeomorphism has a constant sign on the connected components of the domain, see \cite[Lemma~6.7]{res}, hence it is not restrictive to assume that it is positive almost everywhere.} 
$$J_{\Phi}=\Phi'=(\na\Phi)^{\top},\qquad
{\rm ess}\inf\det J_{\Phi}>0.$$ 
Such  bi-Lipschitz transformations will be called admissible and we write
$$\Phi\in\cL(\om).$$
Note that $\cL(\om)$ is understood as a subset of  $\C^{0,1}(\omb,\ombp)$ and  $\C^{0,1}(\omb,\ombp)$
is endowed with its standard Lipschitz norm (the sum of the sup norm of $|\Phi|$ and the Lipschitz constant of $|\Phi|$). 
For $\Phi\in\cL(\om)$ the inverse and adjunct matrix of $J_{\Phi}$ shall be denoted by
$$J_{\Phi}^{-1},\qquad
\adj J_{\Phi}:=(\det J_{\Phi})J_{\Phi}^{-1},$$
respectively.
We denote the composition with $\Phi$
by tilde, i.e., for any tensor field $\psi$ we define 
$$\wt{\psi}:=\psi\circ\Phi.$$

Let us fix a certain index $k$ of the sequence of eigenvalues and consider 
$$\lambda_{0,\Phi}:=\lambda_{0,\Phi,k},\qquad
\lambda_{1,\Phi}:=\lambda_{1,\Phi,k}.$$
Then by the conclusion of the Part I of this series of papers
the eigenvalues are given by the Rayleigh quotients of the respective eigenfields
$u$, $E$, $H$, or  
\begin{align*}
\tau^{0}_{\Phi}u=u\circ\Phi,\qquad
\tau^{1}_{\Phi}E=J_{\Phi}^{\top}(E\circ\Phi),\qquad
\tau^{2}_{\Phi}H=(\det J_{\Phi})J_{\Phi}^{-1}(H\circ\Phi),
\end{align*}
this is:
\begin{align}
\begin{aligned}
\label{eq:intro:eigenval2}
\lambda_{0,\Phi}
&=\frac{\scp{\eps\na_{\gat,\Phi}u}{\na_{\gat,\Phi}u}_{\L^{2}(\omp)}}
{\scp{\nu u}{u}_{\L^{2}(\omp)}}
&
\text{and}\quad
\lambda_{1,\Phi}
&=\frac{\scp{\mu^{-1}\rot_{\gat,\Phi}E}{\rot_{\gat,\Phi}E}_{\L^{2}(\omp)}}
{\scp{\eps E}{E}_{\L^{2}(\omp)}}\\
&=\frac{\scp{\eps_{\Phi}\na_{\gat}\tau^{0}_{\Phi}u}{\na_{\gat}\tau^{0}_{\Phi}u}_{\L^{2}(\om)}}
{\scp{\nu_{\Phi}\tau^{0}_{\Phi}u}{\tau^{0}_{\Phi}u}_{\L^{2}(\om)}}
&
&=\frac{\scp{\mu_{\Phi}^{-1}\rot_{\gat}\tau^{1}_{\Phi}E}{\rot_{\gat}\tau^{1}_{\Phi}E}_{\L^{2}(\om)}}
{\scp{\eps_{\Phi}\tau^{1}_{\Phi}E}{\tau^{1}_{\Phi}E}_{\L^{2}(\om)}}\\
&=\frac{\scp{\nu^{-1}\div_{\gan,\Phi}\eps H}{\div_{\gan,\Phi}\eps H}_{\L^{2}(\omp)}}
{\scp{\eps H}{H}_{\L^{2}(\omp)}}\\
&=\frac{\scp{\nu_{\Phi}^{-1}\div_{\gan}\eps_{\Phi}\tau^{1}_{\Phi}H}{\div_{\gan}\eps_{\Phi}\tau^{1}_{\Phi}H}_{\L^{2}(\om)}}
{\scp{\eps_{\Phi}\tau^{1}_{\Phi}H}{\tau^{1}_{\Phi}H}_{\L^{2}(\om)}}
\end{aligned}
\end{align}
Here
\begin{align*}
\eps_{\Phi}&=\tau^{2}_{\Phi}\eps\tau^{1}_{\Phi^{-1}}
=(\det J_{\Phi})J_{\Phi}^{-1}\wt{\eps}J_{\Phi}^{-\top},
&
\nu_{\Phi}&=\tau^{3}_{\Phi}\nu\tau^{0}_{\Phi^{-1}}
=(\det J_{\Phi})\wt{\nu},\\
\mu_{\Phi}&=\tau^{2}_{\Phi}\mu\tau^{1}_{\Phi^{-1}}
=(\det J_{\Phi})J_{\Phi}^{-1}\wt{\mu}J_{\Phi}^{-\top}
\end{align*}
and
$$\na\tau^{0}_{\Phi}=\tau^{1}_{\Phi}\na,\qquad
\rot\tau^{1}_{\Phi}=\tau^{2}_{\Phi}\rot,\qquad
\div\tau^{2}_{\Phi}=\tau^{3}_{\Phi}\div.$$
Clearly, we have in $\L^{2}_{\nu}(\omp)$ resp. $\L^{2}_{\eps}(\omp)$
\begin{align*}
-\nu^{-1}\div_{\ganp}\eps\na_{\gatp}u&=\lambda_{0,\Phi}u,
&
\eps^{-1}\rot_{\ganp}\mu^{-1}\rot_{\gatp}E&=\lambda_{1,\Phi}E,\\
-\na_{\gatp}\nu^{-1}\div_{\ganp}\eps H&=\lambda_{0,\Phi}H
\intertext{and in $\L^{2}_{\nu_{\Phi}}(\om)$ resp. $\L^{2}_{\eps_{\Phi}}(\om)$}
-\nu_{\Phi}^{-1}\div_{\gan}\eps_{\Phi}\na_{\gat}\tau^{0}_{\Phi}u
&=\lambda_{0,\Phi}\tau^{0}_{\Phi}u,
&
\eps_{\Phi}^{-1}\rot_{\gan}\mu_{\Phi}^{-1}\rot_{\gat}\tau^{1}_{\Phi}E
&=\lambda_{1,\Phi}\tau^{1}_{\Phi}E,\\
-\na_{\gat}\nu_{\Phi}^{-1}\div_{\gan}\eps_{\Phi}\tau^{1}_{\Phi}H
&=\lambda_{0,\Phi}\tau^{1}_{\Phi}H.
\end{align*}
Note that the eigenvalues $\lambda_{\ell,\Phi,k}$ are depending
not only on $\Phi$ (shape of the domain) but also on the mixed boundary conditions imposed on $\gat$ and $\gan$
and on the coefficients $\eps$, $\mu$, and $\nu$, which we do not indicate explicitly in our notations, i.e.,
$$\lambda_{0,\Phi,k}
=\lambda_{0,\Phi,k}(\om,\gat,\eps,\nu),\qquad
\lambda_{1,\Phi,k}
=\lambda_{1,\Phi,k}(\om,\gat,\eps,\mu).$$

%This paper is organized as follows. Section~\ref{presec} is devoted to notation and preliminaries on the Functional Analysis Toolbox.  

In the first part of this series of papers, we have found Hadamard-type formulas for the derivatives of $\lambda_{0,\Phi}$
 and $\lambda_{1,\Phi}$ with respect to $\Phi$ under the assumption that the corresponding (pull-backs of the)  eigenfunctions are differentiable with respect to $\Phi$. The formulas were obtained by differentiating  the Rayleigh quotients \eqref{eq:intro:eigenval2} with respect to $\Phi$. In particular, as for the classical Hellmann-Feynman Theorem, one realizes that the derivatives  of the eigenvectors with respect to $\Phi$ cancel out and do not appear in the formulas. 
 We note that the differentiability  assumption  for the eigenfunctions typically requires that the eigenvalue under consideration is simple, since multiple eigenvalues may bifurcate   into several branches, thus preventing their differentiability with respect to $\Phi$.   In the case of a family of transformations $\Phi_{\chi}$ depending smoothly on a real parameter $\chi$, one may relabel the eigenvalues and repeat the same formal argument for a branch of eigenvalues and eigenfunctions that are assumed to be differentiable with respect to $\chi$. In  general, this relabelling  is not possible for parameters $\chi$ belonging to a multi-dimensional space. 
 
In this second part of this series of papers  we consider transformations $\Phi_{\chi}$ depending on a parameter $\chi$ belonging to an arbitrary Banach space, possibly infinite dimensional, and  analyze this problem in detail. More precisely, we prove a differentiability result for the elementary symmetric functions of the eigenvalues, we provide Hadamard-type formulas for the corresponding directional derivatives and  discuss the bifurcation of multiple eigenvalues by proving a Rellich-type theorem with corresponding formulas for the derivatives.  First, the derivatives are computed under minimal, that is Lipschitz, regularity assumptions on $\Omega$ and $\Phi$ and are formulated in terms of volume integrals. Second, under more general regularity assumptions on $\Omega$, we express those formulas in terms of surface integrals. In particular, this is done by assuming the validity of the Gaffney inequality.  We call the formulas for the Maxwell eigenvalues `Hirakawa's formulas' since, as far as we know, they first appeared in \cite{hira}. 
These results are contained in Section \ref{applications} and are obtained as an application of a number of  theoretical results proved in Section \ref{sec:shape2sound}  
where we provide an abstract framework suitable for de Rham complexes depending on a possibly infinite-dimensional parameter $\chi$. We note that, as a bypass product of our analysis, we provide a rigorous proof of the Hellmann-Feynman Theorem for these eigenvalue problems, see Remark~\ref{rulethumb}.

\section{Perturbation Theory in an abstract framework}
\label{sec:shape2sound}

%\subsection{Theory} 

\label{perturbtheory}

Let $\H_0$ and $\H_1$ be fixed Hilbert spaces. We assume that, besides the scalar products $\scp{\cdot }{\cdot}_{\H_{0}}$ and  $\scp{\cdot }{\cdot}_{\H_{1}}$, we are also given two families of scalar products $\scp{\cdot }{\cdot}_{0,\chi } $ and $\scp{\cdot }{\cdot}_{1,\chi } $ defined on the vector spaces $\H_0$ and $\H_1$ respectively, that depend on a parameter $\chi$ belonging to some open subset $\mcX$ of a fixed real Banach space $X$.  
We assume that the norms associated with $\scp{\cdot }{\cdot}_{0,\chi } $ and $\scp{\cdot }{\cdot}_{1,\chi } $ are equivalent to those associated with  $\scp{\cdot }{\cdot}_{\H_{0}}$ and  $\scp{\cdot }{\cdot}_{\H_{1}}$ respectively. The resulting Hilbert spaces will be denoted by 
$\H_{0,\chi}$ and $\H_{1,\chi}$ and the scalar products $\scp{\cdot }{\cdot}_{0,\chi } $ and $\scp{\cdot }{\cdot}_{1,\chi } $ will also be denoted by 
 $\scp{\cdot }{\cdot}_{\H_{0,\chi}}$ and  $\scp{\cdot }{\cdot}_{\H_{1,\chi}}$.

We assume that for any $\chi\in\mcX$ we are given a densely defined and closed linear operator 
 $\A_{\chi}:D(\A_{\chi})\subset\H_{0,\chi}\to\H_{1,\chi}$ 
with domain of definition $D(\A_{\chi})$. 

Our main assumption is that $D(\A_{\chi})$ does not depend on $\chi$. To emphasise it, we denote by $\V$ the space $D(\A_{\chi})$ by setting
$\V=D(\A_{\chi})$. Since  $D(\A_{\chi})$ is naturally endowed with the graph norm of $\A_{\chi}$ which depends on $\chi$, we assume that those graph norms are equivalent and, unless otherwise indicated, we understand that $\V$ is endowed with one fixed graph norm chosen among them (for many  of our arguments, the specific choice is not relevant).    In particular, the topological dual $\V'$ is well-defined, as well as the analogous space $\V'_{\sharp}$ of
antilinear continuous maps from $\V$ to $\mathbb{C}$ (recall that $\V'$ and $\V'_{\sharp}$ are homeomorphic by conjugation). Note that since $\A_{\chi}$ is closed, $\V$  turns out to be complete, hence it is a Hilbert space continuously embedded into $\H_0$.

Recall that the Hilbert space adjoint $\A^{*}_{\chi}:D(\A^{*}_{\chi})\subset\H_{1,\chi}\to\H_{0,\chi}$
is well defined and characterised by
$$\forall\,x\in \V\quad
\forall\,y\in D(\A^{*}_{\chi})\qquad
\scp{\A_{\chi} x}{y}_{\H_{1,\chi}}=\scp{x}{\A^{*} y}_{\H_{0,\chi}}.$$
The operator  $\A^{*}_{\chi}$ is also  densely defined and  closed\footnote{Note that in principle $  D(\A^{*}_{\chi})$ may depend on $\chi$ but this does not affect  our analysis}.  Recall that  by $\cA_{\chi}$ we denote the reduced operator associated with $\A_{\chi}$. 

It is convenient to set $\Tc=\A^*_{\chi}\A_{\chi}$ and to recall from the first part that $\Tc$ is a non-negative self-adjoint operator densely defined 
in  $\H_{0,\chi}$.
 We shall always  assume that the embedding $D(\cA_{\chi})  \incl\H_{0} $ is compact.   This implies that 
 $$\sigma(\Tc   )\setminus\{0\}
=\{\lambda_{\chi, k}\}_{k\in\nat}
\subset(0,\infty)$$
with eigenvalues 
$0<\lambda_{\chi, 1}\le \lambda_{\chi , 2}\le \dots\le\lambda_{\chi, k-1}\le \lambda_{\chi, k}\le\dots\to\infty$
of finite multiplicity. Here we agree to repeat each eigenvalue in the sequence as many times as its multiplicity.

In order  to study the dependence of the resolvent of $\Tc$ upon variation of  $\chi$, we need to represent it  in a suitable way.  To do so, we  consider  the operator  $\TAc:\V\to\V'_{\sharp}$ which takes any $u\in \V$ to the operator $\TAc u$ defined by 
$$
\scp{\TAc u }{v}= \scp{\A_{\chi} u}{\A_{\chi} v}_{\H_{1,\chi}}
$$
for all $v\in V$.  In a similar way, we consider the operator $\Jc:\H_0\to\V'_{\sharp}$ which takes any $u\in \H_0$ to the operator $\Jc u$ defined by 
$$
\scp{\Jc  u }{v}= \scp{ u}{ v}_{\H_{0,\chi}}
$$
for all $v\in V$.    Then we can prove the following

\begin{lem}\label{res0}
If  $\zeta \in \mathbb{C}\setminus \sigma (\Tc)$  then the  operator $ \TAc-\zeta \Jc  $ is invertible and 
\begin{equation}\label{res1}
\left(\TAc-\zeta \Jc   \right)^{-1}\circ \Jc=(\Tc-\zeta I)^{-1}.
\end{equation}
%Moreover, if $\TAc $ and $\Jc$ are analytic functions (or functions of class $\C^r$ with $r\geq 1$, respectively) in the variable $\chi$, then also $(\Tc-\zeta I)^{-1}$ is an analytic function (or a function of class $\C^r$ with $r\geq 1$, respectively)  in the variable $\chi$. 
\end{lem}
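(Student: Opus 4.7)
The strategy rests on the algebraic identity $(\TAc - \zeta \Jc)|_{D(\Tc)} = \Jc \circ (\Tc - \zeta I)$: for $u \in D(\Tc)$ and $v \in \V$, the definition of the adjoint gives
$$\scp{(\TAc - \zeta \Jc) u}{v} = \scp{\A_\chi u}{\A_\chi v}_{\H_{1,\chi}} - \zeta \scp{u}{v}_{\H_{0,\chi}} = \scp{(\Tc - \zeta I) u}{v}_{\H_{0,\chi}}.$$
Granting the bijectivity of $\TAc - \zeta \Jc \colon \V \to \V'_\sharp$, the formula \eqref{res1} is then immediate: for any $f \in \H_0$ the element $u := (\Tc - \zeta I)^{-1} f$ lies in $D(\Tc) \subset \V$ and satisfies $(\TAc - \zeta \Jc) u = \Jc f$, so $(\TAc - \zeta \Jc)^{-1} \Jc f = (\Tc - \zeta I)^{-1} f$.

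For the injectivity of $\TAc - \zeta \Jc$, suppose $u \in \V$ is annihilated. Then $\scp{\A_\chi u}{\A_\chi v}_{\H_{1,\chi}} = \zeta \scp{u}{v}_{\H_{0,\chi}}$ for every $v \in \V$; the right-hand side is $\H_{0,\chi}$-continuous in $v$, so by the defining property of $\A^*_\chi$ one has $\A_\chi u \in D(\A^*_\chi)$ and $\Tc u = \zeta u$. Since $\zeta \notin \sigma(\Tc)$, this forces $u = 0$.

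For surjectivity I would pick any real $\zeta_0 < 0$ and observe that the bounded sesquilinear form $(u,v) \mapsto \scp{\A_\chi u}{\A_\chi v}_{\H_{1,\chi}} - \zeta_0 \scp{u}{v}_{\H_{0,\chi}}$ is coercive on $\V$ with its graph norm; Lax-Milgram then yields that $\TAc - \zeta_0 \Jc \colon \V \to \V'_\sharp$ is an isomorphism. Writing
$$\TAc - \zeta \Jc = (\TAc - \zeta_0 \Jc)\bigl[I - (\zeta - \zeta_0)(\TAc - \zeta_0 \Jc)^{-1} \Jc\bigr]$$
(with $\Jc$ precomposed with $\V \incl \H_0$ inside the bracket), bijectivity reduces by the Fredholm alternative to triviality of the kernel of the bracket, which coincides with $\ker(\TAc - \zeta \Jc)$ and is therefore trivial by the previous step.

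The main obstacle is the compactness of $(\TAc - \zeta_0 \Jc)^{-1} \Jc$ on $\V$, which amounts to compactness of $\V \incl \H_0$. The hypothesis provides only compactness of $D(\cA_\chi) \incl \H_0$, so when $\ker \A_\chi$ is infinite-dimensional one has to exploit the $\H_0$-orthogonal splitting $\V = \ker \A_\chi \oplus D(\cA_\chi)$ and treat the components separately: in that case $0 \in \sigma(\Tc)$ so the hypothesis forces $\zeta \ne 0$, the equation on $\ker \A_\chi$ is solved by Riesz representation (dividing by $\zeta$), and the Lax-Milgram--Fredholm argument is applied to the reduced operator $\cA_\chi$ on $D(\cA_\chi)$, where the compact embedding does hold.
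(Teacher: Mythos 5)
Your proof is correct, but the surjectivity argument takes a genuinely different route from the paper's. The paper first uses the Riesz representation theorem on $\V$ equipped with the graph inner product to write an arbitrary $F\in\V'_{\sharp}$ as $F=\TAc w+\Jc w$, then sets $u_{w}:=(\Tc-\zeta I)^{-1}w\in D(\Tc)\subset\V$, so that $(\TAc-\zeta\Jc)u_{w}=\Jc w$, and checks that $u=w+(\zeta+1)u_{w}$ is a preimage of $F$. In other words, it uses the hypothesis $\zeta\notin\sigma(\Tc)$ \emph{constructively}, feeding the already-existing resolvent $(\Tc-\zeta I)^{-1}$ back into the variational problem; no coercivity, no compactness, and no case distinction on $\ker\A_{\chi}$ are needed. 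Your Lax--Milgram/Fredholm route instead re-derives invertibility from scratch and uses the hypothesis only through the injectivity step; the price is the reliance on the compact embedding $D(\cA_{\chi})\incl\H_{0}$ (legitimate, since it is a standing assumption of the section) and the extra bookkeeping for a possibly infinite-dimensional $\ker\A_{\chi}$ --- which you correctly identify as the point where the naive argument breaks, since $(\TAc-\zeta_{0}\Jc)^{-1}\circ\Jc$ acts as $-\zeta_{0}^{-1}I$ on the kernel and is therefore not compact there, and your orthogonal splitting with a separate Riesz step on $\ker\A_{\chi}$ does repair it. What your approach buys is independence from the a priori existence of $(\Tc-\zeta I)^{-1}$ as a bounded operator: it is the kind of argument one would use to \emph{establish} that $\sigma(\Tc)\setminus\{0\}$ consists of isolated eigenvalues of finite multiplicity, rather than a lemma downstream of that fact. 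The injectivity step and the derivation of \eqref{res1} from bijectivity are essentially the same in both proofs (yours spells out why a kernel element of $\TAc-\zeta\Jc$ is an eigenvector of $\Tc$, which the paper leaves implicit).
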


\begin{proof} We begin by observing that given $F\in \V_{\sharp}'$ the equation 
$$
\TAc u-\zeta \Jc  u= F
$$
in the unknown $u\in \V$ can be formulated in the equivalent form 
$$
\scp{\A_{\chi} u}{\A_{\chi} v}_{\H_{1,\chi}}-\zeta \scp{u}{v}_{\H_{0,\chi}}=Fv
$$
for all $v\in \V$. It follows that if $\TAc u-\zeta \Jc  u= 0$ for some $u\in \V$ then $u=0$ otherwise $\zeta $ would be an eigenvalue of $\Tc$. Thus $\TAc -\zeta \Jc $ is injective. To prove that it is surjective, we first note that by the Riesz Theorem any functional $F\in  \V_{\sharp}'$ can be represented in the form 
$ F=\TAc w + \Jc  w $ for some $w\in \V$. Thus, we have to prove that given $w\in \V$ there exists $u\in \V$ such that 
\begin{equation}\label{res2}
\TAc u-\zeta \Jc  u= \TAc w+ \Jc  w
\end{equation}
Since $\zeta$ is in the resolvent set of $\Tc$ there exists $u_{w} \in V$ such that 
$$
\TAc u_{w}-\zeta \Jc  u_{w}= \Jc w.
$$
It follows that the solution $u$ to equation \eqref{res2} is given by $u=w+(\zeta +1)u_{w}$. 

The equality in \eqref{res1} is straightforward. 
\end{proof}

\begin{rem} Although the domain $\H_0$ of $(\Tc-\zeta I)^{-1}$ does not depend on $\chi$, the domain of  $\Tc-\zeta I$ may change when $\chi$ is perturbed.
The advantage of the representation formula \eqref{res1}  consists in the fact that the domains and codomains of the operators 
$\TAc-\zeta \Jc$ and $\Jc$ do not depend on $\chi$. This allows to 
easily prove regularity results for the dependence of  $(\Tc-\zeta I)^{-1}$ on $\chi$.  
\end{rem}

In order to study the regularity properties of the eigenvalues $\lambda_{\chi, k}$ we need appropriate regularity assumptions on the families of operators $\A_{\chi}$ and scalar products $\scp{\cdot}{\cdot}_{0,\chi}$, $\scp{\cdot}{\cdot}_{1,\chi}$. \\

The regularity assumption in the following definition is given with respect to the real structure of the Banach spaces involved, in which case differentials are required to be $\reals$-linear functionals (not necessarily $\mathbb{C}$-linear).  In particular, by a function of class $C^{\omega}$ we mean a  real-analytic function, i.e.,  the function is $C^{\infty}$  and locally representable by Taylor's series.

\begin{defi}\label{reg}
{\bf (Regularity assumption)}  We say that the family of operators and scalar products under consideration are of class $\C^r$ with $r\in \nat_0\cup\{\infty\}\cup\{\omega\}$ 
 if 
both the map from $\mathcal{X}$ to ${\mathcal{L}}(V, V'_{\sharp})$ which takes any $\chi\in \mathcal{X}$ to $\TAc$, and the map from $\mathcal{X}$ to ${\mathcal{L}}(\H_0, V'_{\sharp})$ which takes any $\chi\in \mathcal{X}$ to $\Jc$ are of class $\C^r$. 
\end{defi}

We also  recall the definition of gap between operators from \cite[Ch. IV]{K1976}

\begin{defi}\label{gap} Given a Banach space $Z$ and two closed subspaces $M,N$ of $Z$ the gap between $M,N$ is defined by 
$$
\hat\delta (M,N)= \max \{ \delta (M, N), \delta (N,M) \}
$$
where 
$$
\delta (M,N)= \sup_{u\in S_M }{\rm dist} (u, N),\ \ \delta (N,M)= \sup_{u\in S_N }{\rm dist} (u, M)
$$
and $S_M,S_N$ denote sets of unit vectors in $M$ and $N$ respectively.  

Given two closed operators $T$, $S$ acting between two fixed  Banach spaces $E,F$, the gap between $T$ and $S$ is defined by 
$$
\hat\delta(T,S)=\hat\delta (G(T),G(S))
$$
where $G(T)$ and $G(S)$ denote the graphs $T$ and $S$ respectively (considered as closed subspaces of the product space $Z=E\times F$).
\end{defi}

\begin{lem}\label{gapo1}  Assume that the family of operators and scalar products are of class  $\C^0$. Let $\bar\chi\in \mathcal{X}$ be fixed. Then 
$\hat \delta (\Tc,\Ttc) =o(1)$ as $\chi\to \bar \chi$.
\end{lem}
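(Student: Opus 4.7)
The plan is to reduce the claim to a norm-resolvent convergence statement via the representation formula~\eqref{res1} of Lemma~\ref{res0}, and then to invoke the standard fact that norm-resolvent convergence at a single point of the common resolvent set implies convergence in the gap (see \cite[Ch.~IV, Thm.~2.25]{K1976}).

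First, since $\Ttc$ is non-negative and self-adjoint, every $\zeta<0$ lies in $\mathbb{C}\setminus\sigma(\Ttc)$; fix one such $\zeta$. The $\C^{0}$ regularity assumption of Definition~\ref{reg} says that the maps $\chi\mapsto\TAc\in\mathcal{L}(\V,\V'_{\sharp})$ and $\chi\mapsto\Jc\in\mathcal{L}(\H_{0},\V'_{\sharp})$ are continuous at $\bar\chi$; hence so is $\chi\mapsto\TAc-\zeta\Jc$. Since the set of invertible bounded linear operators is open in $\mathcal{L}(\V,\V'_{\sharp})$ and inversion is continuous there, and since $\TAtc-\zeta\Jtc$ is invertible by Lemma~\ref{res0}, we obtain that $\TAc-\zeta\Jc$ is invertible for every $\chi$ sufficiently close to $\bar\chi$, and
$$\bigl(\TAc-\zeta\Jc\bigr)^{-1}\longrightarrow\bigl(\TAtc-\zeta\Jtc\bigr)^{-1}\quad\text{in }\mathcal{L}(\V'_{\sharp},\V).$$
Composing on the right with $\Jc$ and invoking~\eqref{res1}, we deduce that $\zeta\in\mathbb{C}\setminus\sigma(\Tc)$ for such $\chi$, and
$$(\Tc-\zeta I)^{-1}\longrightarrow(\Ttc-\zeta I)^{-1}\quad\text{in }\mathcal{L}(\H_{0},\V).$$
Since $\V\incl\H_{0}$ continuously, this is in particular norm-resolvent convergence in $\mathcal{L}(\H_{0})$.

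The final step is to translate the norm-resolvent convergence into gap convergence: by \cite[Ch.~IV, Thm.~2.25]{K1976}, the estimate $\bigl\|(\Tc-\zeta I)^{-1}-(\Ttc-\zeta I)^{-1}\bigr\|_{\mathcal{L}(\H_{0})}\to 0$ as $\chi\to\bar\chi$ forces $\hat\delta(\Tc,\Ttc)\to 0$, which is the claim.

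The main subtlety to keep in mind is that the scalar products on $\H_{0}$ and $\H_{1}$, as well as the graph norm on $\V$, all depend on $\chi$; naively, $\Tc$ and $\Ttc$ even act between different Hilbert spaces. This is handled harmlessly by the formulation of Lemma~\ref{res0}: the operators $\TAc-\zeta\Jc$ and $\Jc$ are defined on spaces whose underlying vector structure is independent of $\chi$, and the $\chi$-dependence of the inner products and graph norms is encoded into the operators themselves via the $\C^{0}$ assumption, so that the continuity of the resolvent in $\mathcal{L}(\H_{0})$ is genuine and not obscured by a moving ambient norm.
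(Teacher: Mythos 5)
Your argument is correct and follows essentially the same route as the paper: both proofs reduce the gap estimate to norm convergence of the resolvents at a fixed negative $\zeta$ via the representation formula \eqref{res1}, and obtain that convergence from the $\C^0$ assumption together with the continuity (indeed analyticity) of inversion between the fixed spaces $\V$ and $\V'_{\sharp}$. The only cosmetic difference is the final step: the paper assembles the explicit inequality $\hat\delta(\Tc,\Ttc)\le 4\,\|(\Tc+I)^{-1}-(\Ttc+I)^{-1}\|$ from Kato's IV-Theorems 2.17, 2.20 and 2.14, whereas you invoke the packaged equivalence of generalized (gap) convergence and norm-resolvent convergence.
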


\begin{proof} By \cite[Thm.~2.17, p.204]{K1976} it follows that $\hat \delta (\Tc,\Ttc) \le 4\hat \delta (\Tc+I,\Ttc +I)$ and by \cite[Thm.~2.20, p.205]{K1976}
$\delta (\Tc+I,\Ttc +I)=\delta ((\Tc+I)^{-1},(\Ttc +I)^{-1})$. Moreover, by \cite[Thm.~2.14, p.203]{K1976} we have that 
$\delta ((\Tc+I)^{-1},(\Ttc +I)^{-1})\le \|  (\Tc+I)^{-1} - (\Ttc +I)^{-1} \|$. Thus, by formula \eqref{res1}  we have 
$\hat \delta (\Tc,\Ttc)  \le 4
\|  \left(\TAc+ \Jc   \right)^{-1}\circ \Jc   -      \left(\TAtc + \Jtc   \right)^{-1}\circ \Jtc   \| $. By our regularity assumption and by observing that the inversion (for functions between fixed spaces) is an analytic operator, we conclude that $
\|  \left(\TAc+ \Jc   \right)^{-1}\circ \Jc   -      \left(\TAtc + \Jtc   \right)^{-1}\circ \Jtc   \| =o(1)$ as $\chi\to \bar \chi$ and the proof is complete. 
\end{proof}

Let $\chi\in \mathcal X$ and let   $\sigma (\Tc)=\Sigma'_{\chi}\cup \Sigma_{\chi} ''$ where $\Sigma' _{\chi}\cap \Sigma_{\chi} '' =\emptyset$. Assume that  $\Sigma'_{\chi}$ and $\Sigma_{\chi} ''$ are (closed) sets  separated by a simple, closed, rectifiable curve $\Gamma $ in the sense  that $\Sigma'_{\chi}$ is in the interior of $\Gamma$ and $\Sigma''_{\chi}$ in the exterior. The  Riesz Projector is defined as follows:
\begin{equation}\label{rieszpro}
P_{\Tc}=P_{\Tc,\Gamma}=-\frac{1}{2\pi } \int_{\Gamma} (\Tc-\zeta I)^{-1}d\zeta 
\end{equation}

We recall a few  properties of $P_{\Tc}$, see \cite[III-Thm.~6.17~and~p.~273]{K1976}:
\begin{itemize}
\item[(i)] $P_{\Tc}$ is an orthogonal projector  in $\H_{0,\chi}$.
\item[(ii)] $P_{\Tc}(\H_{0,\chi})$ and $(I-P_{\Tc})(\H_{0,\chi})$ are invariant subspaces for $\Tc$.
\item[(iii)] $\sigma \big({\Tc}_{\rvert_{P_{\Tc}(\H_{0,\chi})}  } \big) =\Sigma'_{\chi}$ and $\sigma \big(\Tc_{\rvert_{(I-P_{\Tc})(\H_{0,\chi})}  } \big) =\Sigma_{\chi} ''$.
\item[(iv)] If  $\Sigma'_\chi=\{\lambda_1, \dots , \lambda_m \}$ where $\lambda_1, \dots , \lambda_m$ are eigenvalues of  $\Tc$ then  $P_{\Tc}(\H_{0,\chi})$ is the  space generated by the corresponding eigenfunctions. 
\end{itemize}

%If $\Sigma'=\{\mu\}$ where $\mu $ is an isolated point in the spectrum $\sigma (T)$ of $T$ one can take  $r\in ]0, {\rm dist}(\mu, \sigma \setminus \{\mu\})[$ and let $\gamma$ be the circle centred at $\mu$ with radius $r$ and parametrized as usual by  $\gamma (\theta )=\mu +re^{i\theta}$ for all $\theta \in [0, 2\pi ]$. One says that $\gamma $ separated the spectrum of $T$ in the sense that $\mu$ lies inside the circle and $\sigma \setminus \{\mu \}$ lies outside (at a positive distance).

The following theorem is a restatement  of \cite[IV-Thm.~3.16]{K1976} and  \cite[IV-Thm.~3.11, VII-Thm. 1.7]{K1976}. 
For the sake of completeness, we provide some details of the proof. More details can be found in \cite{dalla} where the more general case of symmetrizable operators is considered.

\begin{theo}\label{mainpro} Assume that the family of operators and scalar products are of class  $\C^r$, $r\in \nat\cup\{\infty \}\cup\{\omega\}$. Let $\bar\chi\in\mathcal{X}$ be fixed and $\bar \lambda $ be an eigenvalue of $T_{\bar \chi}$.  Let $\Gamma$ be a curve separating $\Sigma_{{\bar\chi}}'=\{\bar\lambda\}$ from $\Sigma_{{\bar\chi}}''=\sigma (\Ttc) \setminus \{\bar\lambda\}$ as above. Assume that $\bar\lambda $ has multiplicity $m$ and $\bar\lambda=\lambda_{\bar\chi,k}=\dots = \lambda_{\bar\chi, k+m-1}$ for some $k\in \nat$.  Then there exists $\delta >0$ depending only on $T_{\bar \chi}$ and $\Gamma$ such that if $\| \chi-\bar\chi \|<\delta$ then $\Gamma$ separates also the spectrum of $\Tc$ into two parts: $\Sigma_{\chi}'$ in the interior,   $\Sigma_{\chi}''$ in the exterior of $\Gamma$.  Moreover,  $\Sigma_{\chi}'=\{\lambda_{\chi, k}, \dots , \lambda_{\chi, k+m-1}   \}$, the space generated by the eigenfunctions associated with  $\lambda_{\chi, k}, \dots , \lambda_{\chi, k+m-1}$  has dimension $m$   and coincides with $P_{\Tc,\Gamma}[\H_{0,\chi}]$. Finally, the map defined from $\{ \chi\in \mathcal{X}:\ \| \chi-\bar\chi\|<\delta  \}$ to ${\mathcal{L}}(\H_0, \H_0)$ which takes  $\chi$ to $P_{\Tc ,\Gamma}$  is of class  $\C^r$.
\end{theo}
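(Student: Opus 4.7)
The plan is to leverage the representation formula \eqref{res1} together with the gap estimate in Lemma~\ref{gapo1}: these allow the Riesz projector $P_{\Tc,\Gamma}$ to be rewritten as a contour integral whose integrand acts between $\chi$-independent spaces, so that its $\C^r$ regularity can be read off directly from the regularity assumption on $\TAc$ and $\Jc$. Accordingly, I would split the proof into three steps: showing that $\Gamma$ remains in the resolvent set of $\Tc$ near $\bar\chi$; establishing $\C^r$ regularity of $P_{\Tc,\Gamma}$; and extracting the dimensional and labelling information about $\Sigma'_\chi$.

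For the first step, since $\Gamma\cap\sigma(\Ttc)=\emptyset$, Lemma~\ref{res0} applied at $\bar\chi$ gives that $\TAtc-\zeta\Jtc$ is invertible for every $\zeta\in\Gamma$, and a compactness argument provides a uniform upper bound on $\|(\TAtc-\zeta\Jtc)^{-1}\|$. The $\C^0$ dependence of $\TAc$ and $\Jc$ on $\chi$, together with the openness of the set of invertible operators in $\mathcal{L}(\V,\V'_\sharp)$ and continuity of the inversion map there, produces $\delta>0$ such that $\TAc-\zeta\Jc$ is invertible with $\zeta$-uniformly bounded inverse whenever $\|\chi-\bar\chi\|<\delta$ and $\zeta\in\Gamma$. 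Lemma~\ref{res0} then yields $\Gamma\subset\mathbb{C}\setminus\sigma(\Tc)$, so the Riesz projector $P_{\Tc,\Gamma}$ is well defined by \eqref{rieszpro}. For the second step, I would use \eqref{res1} to rewrite the integrand in \eqref{rieszpro} as $(\TAc-\zeta\Jc)^{-1}\circ\Jc$, which, as an element of $\mathcal{L}(\H_0,\H_0)$, depends on $\chi$ only through $\TAc$ and $\Jc$. Since inversion of bounded operators between two fixed Banach spaces is a real-analytic map on its open domain of definition, $\chi\mapsto(\TAc-\zeta\Jc)^{-1}\in\mathcal{L}(\V'_\sharp,\V)$ is of class $\C^r$, uniformly for $\zeta\in\Gamma$. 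Standard differentiation under the contour integral then transfers this regularity to $\chi\mapsto P_{\Tc,\Gamma}\in\mathcal{L}(\H_0,\H_0)$.

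For the third step, continuity of $\chi\mapsto P_{\Tc,\Gamma}$ in the operator norm, together with the classical fact (cf.\ \cite[IV-Thm.~3.11]{K1976}) that a continuous family of bounded projectors has locally constant finite rank, yields $\dim P_{\Tc,\Gamma}[\H_{0,\chi}]=m$ for $\chi$ close to $\bar\chi$. By properties (ii)--(iv) of the Riesz projector, this range is spanned by the eigenfunctions of $\Tc$ corresponding to the eigenvalues enclosed by $\Gamma$, counted with multiplicity, so there are exactly $m$ such eigenvalues and they constitute $\Sigma'_\chi$. To identify them as $\lambda_{\chi,k},\dots,\lambda_{\chi,k+m-1}$, I would apply the same analysis to auxiliary rectifiable curves separating $\{\lambda_{\bar\chi,1},\dots,\lambda_{\bar\chi,k-1}\}$ and $\{\lambda_{\bar\chi,k+m},\lambda_{\bar\chi,k+m+1},\dots\}$ from $\bar\lambda$: the associated spectral subspaces of $\Tc$ have stable dimension, so none of these eigenvalues can cross $\Gamma$; by the monotone ordering of the $\lambda_{\chi,j}$, the $m$ eigenvalues of $\Tc$ inside $\Gamma$ are then precisely those with indices $k,\dots,k+m-1$.

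The main obstacle is the transfer of $\C^r$ regularity from the integrand to the contour integral in the second step: it requires uniform-in-$\zeta$ control, on the compact curve $\Gamma$, of all derivatives in $\chi$ of $(\TAc-\zeta\Jc)^{-1}\circ\Jc$ up to order $r$. In the real-analytic case $r=\omega$, this uniform control additionally yields convergent Taylor expansions of $P_{\Tc,\Gamma}$ via termwise contour integration, as in \cite[IV-Thm.~3.16, VII-Thm.~1.7]{K1976}.
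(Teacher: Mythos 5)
Your proposal is correct and follows essentially the same route as the paper: the $\C^r$ regularity of $P_{\Tc,\Gamma}$ is obtained exactly as in the text from the representation \eqref{res1}, the real-analyticity of inversion between the fixed spaces $\V$ and $\V'_{\sharp}$, and the continuity of contour integration, while the rank stability and the relabelling of $\Sigma_\chi'$ via auxiliary curves reproduce the paper's argument. The only deviation is in the first step, where you replace Lemma~\ref{gapo1} together with Kato's theorem on spectrum separation under gap-small perturbations by a direct Neumann-series argument for $\TAc-\zeta\Jc$ uniformly on $\Gamma$; this works, but it tacitly uses the converse of Lemma~\ref{res0} (invertibility of $\TAc-\zeta\Jc$ implies $\zeta\in\mathbb{C}\setminus\sigma(\Tc)$), which is not stated there and requires the observation that $\sigma(\Tc)$ consists only of eigenvalues, so that injectivity of $\TAc-\zeta\Jc$ already forces $\zeta\notin\sigma(\Tc)$.
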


\begin{proof}  By \cite[IV-Thm.~3.16]{K1976}  there exists $\bar \delta >0$ depending only on $\Ttc$ and $\Gamma$ such that if $\hat\delta (\Tc,\Ttc)<\bar \delta $ then  $\Gamma$ separates the spectrum of $\Tc$  in two parts: $\Sigma_{\chi}'$ in the interior and  $\Sigma_{\chi}''$ in the exterior of $\Gamma$. Moreover, the space $P_{\Tc}[\H_{0,\chi}]$ is isomorphic to $P_{\Ttc}[\H_{0,\bar\chi}]$ , hence has dimension $m$.  Since $P_{\Tc}[\H_{0,\chi}]$ is an invariant space for $\Tc$ and $\Tc$ is selfadjoint, we have that the restriction of $\Tc$ to $P_{\Tc}[\H_{0,\chi}]$  is diagonalisable, hence $P_{\Tc}[\H_{0,\chi}]$  has a basis of eigenfunctions of $\Tc$ corresponding to $m$ possibly multliple eigenvalues.  Moreover, the same argument applied to all eigenvalues of $\Ttc$ not exceeding $\bar \lambda$  allows to conclude that  by taking $\delta$ sufficiently small we have that  $\Sigma_{\chi}'=\{\lambda_{\chi, k}, \dots , \lambda_{\chi, k+m-1}   \}$.  Thus, in order to complete the proof of the first part of the statement it suffices to note that  by Lemma~\ref{gapo1} it follows that there exists $\delta>0$ such that if $\| \chi-\bar\chi\|<\delta $ then 
$\hat\delta (\Tc,\Ttc)<\bar \delta $.

The continuous dependence of $P_{\Tc}$ on $\chi$ is deduced from formula \eqref{rieszpro}  and is proved in \cite[IV-Thm.~3.16]{K1976}. In order to prove 
 the dependence o $P_{\Tc}$ on $\chi$ is of class $\C^r$, we use formula \eqref{res1}   and note that the two maps 
 $ \chi \mapsto \left(\TAc-\zeta \Jc   \right)$  and     $ \chi\mapsto \Jc   $ are of class $\C^r$ from $\mathcal{X}$ to the space   
  Banach space $\C(\Gamma , {\mathcal{L}}(V,V'_{\sharp})   )$ of continuous functions from $\Gamma $ to ${\mathcal{L}}(V,V'_{\sharp}) $ endowed with the natural sup-norm defined by  
$
\| f \|_{ \C(\Gamma , {\mathcal{L}}(V,V'_{\sharp})    ) }=\sup_{\zeta \in \Gamma }\| f(\zeta )  \|_{{\mathcal{L}}(V,V'_{\sharp})    }  \, .
$
Since the inversion operator is real-analytic it follows that the map $ \chi \mapsto \left(\TAc-\zeta \Jc   \right)^{-1}\circ \Jc$ is of class $\C^r$ in a neighborhood of $\bar\chi$. In order to conclude it suffices to observe that integration on $\Gamma$ defines a linear and continuous map, hence analytic map.  
\end{proof}

Under  the same assumptions of Theorem~\ref{mainpro} we set $F=\{k, \dots, k+m-1\}$ and  we consider the  elementary symmetric functions of the corresponding eigenvalues
\begin{equation}\label{symfunct}
\Lambda_{F,s}[\chi] := \sum_{\substack{j_1,\dots, j_s \in F\\j_1<\dots<j_s}} \lambda_{j_1,\chi} \cdot \cdot \cdot \lambda_{j_s,\chi}, \qquad s=1,\dots, m.
\end{equation}

We plan to prove that these functions are of class $\C^r$. To do so, we follow the argument in \cite{LLdC2004a}.  We fix  an orthonormal basis  $\bar u_1,\dots \bar u_m$ in $\H_{0,\bar\chi}$ of the eigenspace associated with
$\bar \lambda$ and we note that, possibly shrinking $\delta$, the vectors $P_{\Tc ,\Gamma}\bar u_1, \dots , P_{\Tc ,\Gamma}\bar u_m$ are linearly independent. 
Thus, it is possible to apply the Gram-Schmidt procedure to orthonormalize those vectors  in $\H_{0,\chi}$ and define a basis  of orthonormal vectors 
$
u_{\chi, 1}, \dots, u_{\chi,m}
$
in $\H_{0,\chi}$  for the space $P_{\Tc ,\Gamma}[\H_{0,\chi}]$. Since the Gram-Schmidt procedure involves elementary operations and the scalar products $\scp{u}{v}_{\H_{0,\chi}}$ with $v \in \V$ are of class $\C^r$ with respect to $\chi$ by our assumptions, it follows that $
u_{\chi, 1}, \dots, u_{\chi,m}
$ are of class $\C^r$  from a neighborhood of $\bar \chi $ in $\mathcal X$ to $\H_{0}$. 

In order to reduce our problem to finite dimension, it would be natural to consider the restriction of the operator $\Tc$ to the $m$-dimensional space 
$P_{\Tc ,\Gamma}[\H_{0,\chi}]$ and to represent it by a matrix with respect to the orthonormal basis defined above. However, since the domain of $\Tc$ depends on $\chi$, it is not straightforward to prove that the entries of that matrix are of class $\C^r$ with respect to $\chi$. For this reason, it is safer to use the resolvent of $\Tc$ rather than $\Tc$ itself. Thus we consider the resolvent operator for $\zeta =-1$ and we set 
$$
R_{\chi}= (\Tc +I)^{-1}\, .
$$ 

%We note that if we assume that the family of operators and scalar products are at least  of class  $\C^1$ then  $R_{\chi}$ is a $C^1$ function of $\chi$ by Lemma~\ref{res0} , hence we have no problem in using it for  reducing the problem to finite dimension. 

We denote by $\mathcal{S}=(\mathcal{S}_{hl})_{h,l=1,\dots m}$ the $m\times m$ symmetric matrix with respect to the basis $u_{\chi, 1}, \dots, u_{\chi,m}$ of the restriction of $R_{\chi}$ to the $m$-dimensional space 
$P_{\Tc ,\Gamma}[\H_{0,\chi}]$.  Clearly, we have 
\begin{equation}
{\mathcal{S}}_{hl}=\scp{R_{\chi} u_{   \chi, h}  }{u_{\chi , l}   }_{\H_{0,\chi}}  = \scp{\left(\TAc+ \Jc   \right)^{-1}\circ \Jc u_{   \chi, h}}{u_{\chi , l}}_{\H_{0,\chi}}.            \end{equation}

Recall that by Theorem~\ref{mainpro} the eigenvalues of the matrix $\mathcal{S}$ are given by  $(\lambda_{\chi, k}+1)^{-1}, \dots ,$ $ ( \lambda_{\chi, k+m-1}+1)^{-1}$.

Then it  is natural to consider  the functions 
\begin{equation}
    \hat{\Lambda}_{F,s}[\chi]:=\sum_{\substack{j_1,\dots, j_s \in F\\j_1<\dots<j_s}} (\lambda_{j_1,\chi}+1)\cdot \cdot \cdot (\lambda_{j_s,\chi}+1),
\end{equation}
 and to note that 
\begin{equation} \label{lambdatilde}
    \Lambda_{F,s}[\chi] = \sum_{p=0}^s (-1)^{s-p} \binom{|F|-k}{s-p} \hat{\Lambda}_{F,p}[\chi],
\end{equation}
where we have set $\Lambda_{F,0}= \hat{\Lambda}_{F,0}=1$.  Thus,  in order to study the dependence of the  functions $ \Lambda_{F,s}[\chi]$ on $\chi$, it is enough to consider the functions $\hat{\Lambda}_{F,s}[\chi]$. To do so, it is convenient  to set
\begin{equation}\label{lambdatildeemme}
    \widehat{M}_{F,s}[\chi]:=\sum_{\substack{j_1,\dots, j_s \in F\\j_1<\dots<j_s}} (\lambda_{j_1,\chi}+1)^{-1}\cdot \cdot \cdot (\lambda_{j_s,\chi}+1)^{-1},
\end{equation}
and to note that
\begin{equation} \label{lambdatildeemmecappuccio}
    \hat{\Lambda}_{F,s}[\chi]=\frac{ \widehat{M}_{F,m-s}[\chi]}{ \widehat{M}_{F,m}[\chi]} .
\end{equation}

We are now ready to prove the following

\begin{theo}\label{symtheo} Let the same assumptions of Theorem~\ref{mainpro}  hold. Then there exists an open neighborhood $ {\mathcal{X}}_{\bar \chi}$ of $\bar \chi$ in $\mathcal{X}$ such that the restriction of the functions   $\Lambda_{F,s}$ to $ {\mathcal{X}}_{\bar \chi}$ are of class $\C^r$ for all $s=1,\dots, m$. Moreover, the directional derivatives of the symmetric functions $\Lambda_{F,s}[\chi]$  at the point $\bar \chi$ in the direction $\dot\chi$  are given by the  formula
\begin{equation} \label{matrixrellichtrace}
\partial_{\dot\chi}  \Lambda_{F,s}[\chi]_{\big\rvert_{\chi=\bar\chi}} = \bar \lambda ^{s-1}\binom{m-1}{s-1}\sum_{h=1}^m \left(
 \partial_{\dot\chi} 
\scp{\A_{\chi} u_{  \bar  \chi, h}}{\A_{\chi} u_{  \bar  \chi, h}}_{\H_{1,\chi}}
-  \bar \lambda  \partial_{\dot\chi}    \scp{ u_{  \bar  \chi, h}}{ u_{  \bar  \chi, h}}_{\H_{0,\chi}}
\right)
\end{equation}
for $s=1,\dots, m$, where $u_{  \bar  \chi, 1},\dots, u_{  \bar  \chi, m}$ is an orthonormal basis in $\H_{0,\bar\chi}$ of eigenvectors associated with
$\bar \lambda$.

\end{theo}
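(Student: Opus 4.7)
The plan is to exploit the finite-dimensional matrix representation of $T_\chi$ on the invariant subspace $P_{\Tc,\Gamma}[\H_{0,\chi}]$ provided by Theorem~\ref{mainpro}, combined with a trace identity for elementary symmetric functions of the eigenvalues of a matrix at a scalar multiple of the identity. The Hellmann--Feynman type cancellation in the final formula will then come from differentiating the normalization of the eigenfunctions.

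For the $\C^r$ regularity of $\Lambda_{F,s}$, I would follow the setup already introduced in the paper: the entries $\mathcal{S}_{hl}(\chi)=\scp{(\TAc+\Jc)^{-1}\circ\Jc\, u_{\chi,h}}{u_{\chi,l}}_{\H_{0,\chi}}$ of the matrix $\mathcal{S}(\chi)$ are of class $\C^r$, since $\chi\mapsto(\TAc+\Jc)^{-1}\circ\Jc$ belongs to $\C^r(\mcX,\mathcal{L}(\H_0,\H_0))$ by the regularity assumption and the analyticity of inversion, the scalar products $\scp{\cdot}{\cdot}_{\H_{0,\chi}}$ are $\C^r$ in $\chi$, and $\chi\mapsto u_{\chi,h}$ is $\C^r$ with values in $\H_0$. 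Since by Theorem~\ref{mainpro} the eigenvalues of $\mathcal{S}(\chi)$ are the numbers $(\lambda_{\chi,j}+1)^{-1}$, $j\in F$, the function $\widehat{M}_{F,s}[\chi]$ is (up to sign) a coefficient of the characteristic polynomial of $\mathcal{S}(\chi)$, hence a polynomial in its entries, and therefore $\C^r$. The nonvanishing $\widehat{M}_{F,m}[\bar\chi]=(\bar\lambda+1)^{-m}\neq 0$ combined with \eqref{lambdatildeemmecappuccio} and \eqref{lambdatilde} then yields the $\C^r$ regularity of $\Lambda_{F,s}$ on a neighborhood of $\bar\chi$.

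For the derivative formula, I would reduce the problem to a trace. Consider the $\C^r$ matrix $N(\chi):=\mathcal{S}(\chi)^{-1}-I$, whose eigenvalues are precisely $\lambda_{\chi,k},\dots,\lambda_{\chi,k+m-1}$, so that $\Lambda_{F,s}[\chi]=e_s(N(\chi))$ and $N(\bar\chi)=\bar\lambda I_m$. A direct Taylor expansion of the characteristic polynomial shows that for any $\C^1$ family of matrices with $N(\bar\chi)=cI_m$,
\begin{equation*}
\partial_{\dot\chi}\,e_s(N(\chi))\big|_{\bar\chi}=\binom{m-1}{s-1}c^{s-1}\,\tr\bigl(\partial_{\dot\chi}N(\bar\chi)\bigr),
\end{equation*}
so the task reduces to computing $\tr(\partial_{\dot\chi}N(\bar\chi))$. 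Using that $u_{\chi,h}\in P_{\Tc,\Gamma}[\H_{0,\chi}]\subset\V$, we may write $N_{hl}(\chi)=\scp{\A_\chi u_{\chi,h}}{\A_\chi u_{\chi,l}}_{\H_{1,\chi}}$. Applying the Leibniz rule to $N_{hh}$ at $\bar\chi$ splits the derivative into an ``external'' contribution $\partial_{\dot\chi}\scp{\A_\chi u_{\bar\chi,h}}{\A_\chi u_{\bar\chi,h}}_{\H_{1,\chi}}$ and an ``internal'' contribution which, by the eigenvalue equation $\A_{\bar\chi}^{*}\A_{\bar\chi}u_{\bar\chi,h}=\bar\lambda u_{\bar\chi,h}$, equals $2\bar\lambda\,\mathrm{Re}\scp{\partial_{\dot\chi}u_{\chi,h}|_{\bar\chi}}{u_{\bar\chi,h}}_{\H_{0,\bar\chi}}$. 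Differentiating the normalization $\scp{u_{\chi,h}}{u_{\chi,h}}_{\H_{0,\chi}}\equiv 1$ at $\bar\chi$ gives
\begin{equation*}
2\,\mathrm{Re}\scp{\partial_{\dot\chi}u_{\chi,h}|_{\bar\chi}}{u_{\bar\chi,h}}_{\H_{0,\bar\chi}}=-\partial_{\dot\chi}\scp{u_{\bar\chi,h}}{u_{\bar\chi,h}}_{\H_{0,\chi}}\big|_{\bar\chi},
\end{equation*}
so the internal contribution collapses to $-\bar\lambda\,\partial_{\dot\chi}\scp{u_{\bar\chi,h}}{u_{\bar\chi,h}}_{\H_{0,\chi}}|_{\bar\chi}$. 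Summing over $h=1,\dots,m$ and multiplying by $\binom{m-1}{s-1}\bar\lambda^{s-1}$ produces exactly \eqref{matrixrellichtrace}.

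The main obstacle I anticipate is justifying the Leibniz step: a priori $\chi\mapsto u_{\chi,h}$ is only $\C^r$ into $\H_0$, whereas the form $\scp{\A_\chi u_{\chi,h}}{\A_\chi u_{\chi,l}}_{\H_{1,\chi}}$ requires a $\V$-valued differentiation. To overcome this, I would upgrade the regularity of the Riesz projector by observing that Lemma~\ref{res0} represents $(\Tc-\zeta I)^{-1}$ as $(\TAc-\zeta\Jc)^{-1}\circ\Jc$, in which $(\TAc-\zeta\Jc)^{-1}$ maps $\V'_\sharp$ into $\V$; integrating along $\Gamma$ shows that $\chi\mapsto P_{\Tc,\Gamma}$ is of class $\C^r$ from $\mcX$ into $\mathcal{L}(\H_0,\V)$. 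Running the Gram--Schmidt procedure on $P_{\Tc,\Gamma}\bar u_j$ with respect to the $\C^r$ scalar products $\scp{\cdot}{\cdot}_{\H_{0,\chi}}$ then yields vectors $u_{\chi,h}$ that are $\C^r$ with values in $\V$, which is precisely what the Leibniz computation requires.
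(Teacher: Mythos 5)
Your proof is correct, and its first half (the $\C^r$ regularity of $\Lambda_{F,s}$ via the coefficients of the characteristic polynomial of $\mathcal{S}$ together with \eqref{lambdatilde} and \eqref{lambdatildeemmecappuccio}) coincides with the paper's argument; for the derivative formula, however, you take a genuinely different route. The paper restricts to the line $\tau\mapsto\bar\chi+\tau\dot\chi$, invokes the Rellich-type Theorem~\ref{Rellichthm} to obtain one-sided derivatives of the relabelled eigenvalue branches, differentiates \eqref{symfunct} branch by branch, and identifies the resulting sum with the trace of the matrix \eqref{matrixrellich}; the Hellmann--Feynman cancellation is carried out inside the proof of Theorem~\ref{Rellichthm} on the resolvent matrix $\mathcal{S}_{hl}=\scp{R_\chi u_{\chi,h}}{u_{\chi,l}}_{\H_{0,\chi}}$, where the terms containing $\partial_{\dot\chi}u_{\chi,h}$ only ever appear under the bounded operator $R_{\bar\chi}$, so that $\H_0$-valued differentiability of the basis suffices. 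You instead work directly with $N(\chi)=\mathcal{S}(\chi)^{-1}-I$, apply the purely algebraic identity $\partial_{\dot\chi}e_s(N)=\binom{m-1}{s-1}\,\bar\lambda^{\,s-1}\tr\bigl(\partial_{\dot\chi}N\bigr)$ at $N(\bar\chi)=\bar\lambda I_m$ (checked on the principal minors: the differential of $\det$ at $cI_s$ is $c^{s-1}\tr$, and each index lies in $\binom{m-1}{s-1}$ of the $s$-subsets), and perform the cancellation on $N_{hh}=\scp{\A_\chi u_{\chi,h}}{\A_\chi u_{\chi,h}}_{\H_{1,\chi}}$ itself. This buys a proof that bypasses the Rellich theorem and the relabelling of branches entirely, and so treats all $r\in\nat\cup\{\infty\}\cup\{\omega\}$ and an arbitrary Banach parameter space uniformly; the price is that you must differentiate $u_{\chi,h}$ in the graph norm of $\V$ --- exactly the difficulty the authors avoid by ``using the resolvent of $\Tc$ rather than $\Tc$ itself'' --- and you correctly close this gap by observing that $(\TAc-\zeta\Jc)^{-1}$ maps $\V'_{\sharp}$ into $\V$, hence the Riesz projector is of class $\C^r$ into $\mathcal{L}(\H_0,\V)$ and Gram--Schmidt produces a $\V$-valued $\C^r$ basis. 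Both arguments are sound; yours is somewhat more self-contained at the cost of this extra regularity upgrade.
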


\begin{proof} Up to the sign, the functions $\widehat M_{F,s}[\chi]$ are the coefficients of the characteristic polynomial ${\rm det} ({\mathcal{S}}_{hk}-\lambda I  ) $ of the matrix $
{\mathcal{S}}_{hk}$. Since those coefficients are obtained as sums of products of the coefficients of $
{\mathcal{S}}_{hk}$ which are functions of class $\C^r$ in the variable $\chi$, for $\chi$ in a suitable neghborhood $ {\mathcal{X}}_{\bar\chi}$ of $\bar\chi$, it follows they are also of class $\C^r$ on  $ {\mathcal{X}}_{\bar\chi}$. By formulas \eqref{lambdatilde} and   \eqref{lambdatildeemmecappuccio} we deduce the same results for the functions $\Lambda_{F,s}[\chi]$ and the proof of the first part of the theorem is complete. In order to prove formula \eqref{matrixrellichtrace} it suffices to differentiate in formula \eqref{symfunct}.  This requires some care. Namely, we fix a direction $\dot\chi\in X$ and we consider the one parameter family $\mathbb{R} \ni \tau\mapsto \chi_{\tau}:=\bar \chi +\tau \dot\chi$. By  Theorem~\ref{Rellichthm} the eigenvalues 
$\lambda_{j,\chi_{\tau}}$ with $j\in F$ admit right (as well as left) derivatives  ${\frac{d  \lambda_{j,\chi_{\tau}}}{d\tau}}_{\vert_{\tau=0^+}}$. 
Thus by differentiating formula  \eqref{symfunct} we get 
\begin{equation}
\partial_{\dot\chi}  \Lambda_{F,s}[\chi]_{\big\rvert_{\chi=\bar\chi}}=   
  \bar\lambda^{s-1}\sum_{\substack{j_1,\dots, j_{s} \in F\\j_1<\dots<j_{s}}} {\frac{d  \lambda_{j_1,\chi_{\tau}}}{d\tau}}_{\vert_{\tau=0^+}}
 \!\!\!\! +\dots +{\frac{d  \lambda_{j_s,\chi_{\tau}}}{d\tau}}_{\vert_{\tau=0^+}}=\bar\lambda^{s-1}\binom{m-1}{s-1} \sum_{j\in F}  {\frac{d  \lambda_{j,\chi_{\tau}}}{d\tau}}_{\vert_{\tau=0^+}}.
\end{equation}
Finally, it suffices to observe that by Theorem~\ref{Rellichthm}  the sum in the last term  of the previous formula  is the trace of the matrix \eqref{matrixrellich}, that is  $\sum_{h=1}^m \left(
 \partial_{\dot\chi} 
\scp{\A_{\chi} u_{  \bar  \chi, h}}{\A_{\chi} u_{  \bar  \chi, h}}_{\H_{1,\chi}}
-  \bar \lambda  \partial_{\dot\chi}    \scp{ u_{  \bar  \chi, h}}{ u_{  \bar  \chi, h}}_{\H_{0,\chi}}
\right)
$.
\end{proof}

The following bifurcation result  is deduced by the classical Rellich theorem for symmetric matrixes which, up to ur knowledge, is know for perturbations of class $\C^1$ or analytic. Note that it is necessary to restrict to families of operators depending on one scalar parameter since counterexamples are well known for families depending on two or more parameters.

\begin{theo}[Rellich] \label{Rellichthm} Let the same assumptions of Theorem~\ref{mainpro} hold with $X={\mathbb{R}}$ and $r=1,\omega$. Then for $\chi$ sufficiently close to $\bar \chi$, the eigenvalues  $\lambda_{\chi, k}, \dots ,$ $  \lambda_{\chi, k+m-1}$ can be represented by $m$      functions $\chi \mapsto \zeta_{\chi, k}, \dots , \zeta_{\chi, k+m-1}$ of class $\C^r$, defined in a suitable neighborhood of $\bar \chi$.  Moreover, given an orthonormal basis of eigenvectors $u_{\bar\chi, 1}, \dots, u_{\bar\chi,m} $ in $\H_{0,\bar\chi}$   for the eigenspace of $\bar\lambda$,  the derivatives in the variable $\chi\in {\mathbb{R}}$ of those functions at the point $\bar \chi$  are given by the eigenvalues of the matrix   
\begin{equation}\label{matrixrellich}\left(  {\frac{d}{d\chi}}\biggl( \scp{\A_{\chi} u_{  \bar  \chi, h}}{\A_{\chi} u_{  \bar  \chi, l}}_{\H_{1,\chi}}\biggr)_{\big\vert_{\chi=\bar\chi}}
-  \bar \lambda  {\frac{d}{d\chi}}\biggl( \scp{ u_{  \bar  \chi, h}}{ u_{  \bar  \chi, l}}_{\H_{0,\chi}}\biggr)_{\big\vert_{\chi=\bar\chi}}
 \right)_{h,l=1,\dots , m}.
\end{equation}
\end{theo}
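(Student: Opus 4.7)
The plan is to reduce the problem to the classical finite-dimensional Rellich theorem applied to the matrix representing $\Tc$ on the $m$-dimensional invariant subspace $P_{\Tc,\Gamma}[\H_{0,\chi}]$, and then to identify its derivative at $\bar\chi$ with the matrix~\eqref{matrixrellich}. The preliminary step is a regularity upgrade for the Gram-Schmidt orthonormal frame $u_{\chi,1},\dots,u_{\chi,m}$: the discussion preceding Theorem~\ref{symtheo} only gives that $\chi\mapsto u_{\chi,h}$ is of class $\C^r$ into $\H_0$, but a closer inspection of~\eqref{res1} reveals that $(\Tc-\zeta I)^{-1}=(\TAc-\zeta\Jc)^{-1}\circ\Jc$ is in fact of class $\C^r$ from a neighborhood of $\bar\chi$ into $\cL(\H_0,\V)$, since the inverse on the right naturally takes values in $\V$. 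Integrating along $\Gamma$ yields the same regularity for $P_{\Tc,\Gamma}\in\cL(\H_0,\V)$, whence $u_{\chi,h}$ is of class $\C^r$ as a map into $\V$ as well. This upgrade is essential to make sense of $\A_\chi u_{\chi,h}$ and of its $\chi$-derivative.

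Next I would define the symmetric matrix
\begin{equation*}
M(\chi)_{hl}:=\scp{\A_\chi u_{\chi,h}}{\A_\chi u_{\chi,l}}_{\H_{1,\chi}},\qquad h,l=1,\dots,m.
\end{equation*}
Since $P_{\Tc,\Gamma}[\H_{0,\chi}]$ is invariant under the selfadjoint operator $\Tc$, the matrix $M(\chi)$ represents the restriction of $\Tc$ to this subspace in the orthonormal basis $u_{\chi,h}$, and by Theorem~\ref{mainpro} its eigenvalues are precisely $\lambda_{\chi,k},\dots,\lambda_{\chi,k+m-1}$. The $\C^r$ regularity of $u_{\chi,h}$ in $\V$, combined with the $\C^r$ regularity of $\TAc$ in $\cL(\V,\V'_{\sharp})$ postulated in Definition~\ref{reg}, yields that $\chi\mapsto M(\chi)$ is of class $\C^r$. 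The classical Rellich theorem for one-parameter families of real symmetric matrices (valid both in the $\C^1$ and the analytic setting) then furnishes the relabeled $\C^r$ branches $\chi\mapsto\zeta_{\chi,k+j-1}$, $j=1,\dots,m$.

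To identify the derivative matrix, I would differentiate $M_{hl}$ at $\bar\chi$ via the chain rule, writing $\dot u_h:=\frac{d u_{\chi,h}}{d\chi}\big|_{\chi=\bar\chi}\in\V$. The contribution from varying the basis vectors is
\begin{equation*}
\scp{\A_{\bar\chi}\dot u_h}{\A_{\bar\chi} u_{\bar\chi,l}}_{\H_{1,\bar\chi}}+\scp{\A_{\bar\chi} u_{\bar\chi,h}}{\A_{\bar\chi}\dot u_l}_{\H_{1,\bar\chi}},
\end{equation*}
which, using the eigenvalue equation $\A^{*}_{\bar\chi}\A_{\bar\chi} u_{\bar\chi,j}=\bar\lambda\, u_{\bar\chi,j}$ and the defining property of the Hilbert-space adjoint, collapses to $\bar\lambda(\scp{\dot u_h}{u_{\bar\chi,l}}_{\H_{0,\bar\chi}}+\scp{u_{\bar\chi,h}}{\dot u_l}_{\H_{0,\bar\chi}})$. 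Differentiating the orthonormality constraint $\scp{u_{\chi,h}}{u_{\chi,l}}_{\H_{0,\chi}}=\delta_{hl}$ at $\bar\chi$ rewrites this sum as $-\bar\lambda\frac{d}{d\chi}\scp{u_{\bar\chi,h}}{u_{\bar\chi,l}}_{\H_{0,\chi}}\big|_{\chi=\bar\chi}$, so that $M'(\bar\chi)$ coincides exactly with the matrix in~\eqref{matrixrellich}. Since $M(\bar\chi)=\bar\lambda\, I$ is a multiple of the identity, the $\bar\lambda$-eigenspace fills all of $\reals^m$, and classical Rellich identifies the derivatives $\zeta'_{j}(\bar\chi)$ with the eigenvalues of $M'(\bar\chi)$, completing the argument.

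The hard part will be the regularity upgrade of $u_{\chi,h}$ from $\H_0$ to $\V$: without it, neither the matrix $M(\chi)$ nor its differentiability can be established, and the whole finite-dimensional reduction breaks down. Once this upgrade is secured by the sharper reading of~\eqref{res1}, the remaining steps---matrix representation, invocation of classical Rellich, and the chain-rule computation---are routine consequences of the eigenvalue equation together with the normalization constraint.
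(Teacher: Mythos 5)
Your proposal is correct, but it takes a genuinely different route from the paper's. The authors explicitly decline to represent $\Tc$ itself on the invariant subspace, remarking that ``since the domain of $\Tc$ depends on $\chi$, it is not straightforward to prove that the entries of that matrix are of class $\C^r$''; they therefore pass to the resolvent, apply the classical Rellich theorem to the matrix $\mathcal{S}_{hl}=\scp{R_{\chi}u_{\chi,h}}{u_{\chi,l}}_{\H_{0,\chi}}$, recover the eigenvalue branches via $\zeta_{\chi,k}=\hat\zeta_{\chi,k}^{-1}-1$, and compute $\frac{d}{d\chi}\mathcal{S}$ through the representation \eqref{res1}, with the basis-derivative terms cancelling because $R_{\bar\chi}u_{\bar\chi,h}=(\bar\lambda+1)^{-1}u_{\bar\chi,h}$; the final matrix is then $-(\bar\lambda+1)^{2}\frac{d}{d\chi}\mathcal{S}$. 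You instead resolve the authors' stated obstruction head-on: the matrix entries only involve the quadratic form $\scp{\TAc u}{v}$, whose domain $\V$ is $\chi$-independent by hypothesis, and your regularity upgrade of $(\TAc-\zeta\Jc)^{-1}\circ\Jc$ to a $\C^r$ map into $\cL(\H_0,\V)$ (hence of $u_{\chi,h}$ into $\V$) is sound --- it is exactly the argument in the paper's appendix read off in the sharper target space. With that in hand your chain-rule computation, using $\A^*_{\bar\chi}\A_{\bar\chi}u_{\bar\chi,j}=\bar\lambda u_{\bar\chi,j}$ and the differentiated orthonormality constraint, legitimately lands on \eqref{matrixrellich} without the detour through the resolvent and the $(\bar\lambda+1)$ bookkeeping. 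What the paper's route buys is that it never needs more than $\H_0$-regularity of the eigenbasis and works entirely with bounded operators; what yours buys is directness and a transparent link to the Hellmann--Feynman computation of Remark~\ref{rulethumb}. One small point worth making explicit in your write-up: the Gram--Schmidt vectors are built from $P_{\Tc,\Gamma}\bar u_j\in\V$, so the normalizing scalar products $\scp{\cdot}{\cdot}_{\H_{0,\chi}}$ are indeed evaluated on $\V$-valued arguments, which is where Definition~\ref{reg} guarantees their $\C^r$ dependence.
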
  

\begin{proof}  
Since the parameter $\chi$ is assumed to be real and the entries of the matrix  ${\mathcal{S}}$ are functions  analytic in $\chi$ or of class $\C^1$ then the classical Rellich Theorem  is applicable  (see \cite[Thm. 1, p.33]{Re1969} and  to \cite[Ch. II, Thms. 6.1, 6.8]{K1976})   and it allows to represent the eigenvalues of ${\mathcal{S}}$   by means of $m$  functions $\chi \mapsto \hat \zeta_{\chi, k}, \dots , \hat \zeta_{\chi, k+m-1}$ which are analytic or of class $C^1$, depending on the regularity assumptions. 
 Recall that the numbers  $(\lambda_{\chi, k}+1)^{-1}, \dots ,$ $ ( \lambda_{\chi, k+m-1} +1)^{-1}$ are exactly the eigenvalues of the $m\times m$ symmetric matrix ${\mathcal{S}}$
defined above. Then, by setting $\zeta_{\chi, k}=  \hat \zeta_{\chi _k}^{-1}-1$   we conclude the proof of the first part of the statement.

By \cite[Ch. II, Thm. 5.4]{K1976} the derivatives  of $\hat \zeta_{\chi, k}, \dots , \hat \zeta_{\chi, k+m-1}$  with respect to $\chi$ at $\bar \chi$  are given by the eigenvalues of the matrix $\frac{d}{d\chi}{\mathcal{S}}$, which we now calculate. First of all 
we note that by differentiating ${\mathcal{S}}_{hl}$ with respect to $\chi $ at the point $\bar \chi$  we get
\begin{eqnarray}\label{dermat}\lefteqn{
\frac{d}{d\chi}{\mathcal{S}}_{hl}=   \scp{\frac{d}{d\chi}R_{\chi}  u_{  \bar  \chi, h}  }{   u_{\bar \chi , l}   }_{\H_{0,\bar \chi}}+
\frac{d}{d\chi}   \scp{R_{\bar\chi} u_{  \bar \chi, h}  }{u_{\bar\chi , l}   }_{\H_{0,\chi}}  }\nonumber \\ 
& &\qquad\qquad +
  \scp{R_{\bar\chi}   \frac{d}{d\chi}  u_{   \chi, h}  }{u_{\bar\chi , l}   }_{\H_{0,\bar \chi}}+  \scp{R_{\bar\chi}   u_{  \bar  \chi, h}  }{  \frac{d}{d\chi}  u_{\chi , l}   }_{\H_{0,\bar \chi}}  = \scp{\frac{d}{d\chi}R_{\chi}   u_{  \bar  \chi, h}  }{   u_{\bar \chi , l}   }_{\H_{0,\bar \chi}}
\end{eqnarray}
where we have used the fact that 
\begin{eqnarray}\frac{d}{d\chi}   \scp{R_{\bar\chi} u_{  \bar \chi, h}  }{u_{\bar\chi , l}   }_{\H_{0,\chi}} +
 \scp{R_{\bar\chi}  \frac{d}{d\chi}  u_{   \chi, h}  }{u_{\bar\chi , l}   }_{\H_{0,\bar \chi}}+  \scp{R_{\bar\chi}   u_{  \bar  \chi, h}  }{  \frac{d}{d\chi}  u_{\chi , l}   }_{\H_{0,\bar \chi}} \nonumber \\
 = (\bar \lambda +1)^{-1} (
\frac{d}{d\chi}   \scp{ u_{  \bar \chi, h}  }{u_{\bar\chi , l}   }_{\H_{0,\chi}} +
 \scp{  \frac{d}{d\chi}  u_{   \chi, h}  }{u_{\bar\chi , l}   }_{\H_{0,\bar \chi}}+  \scp{   u_{  \bar  \chi, h}  }{  \frac{d}{d\chi}  u_{\chi , l}   }_{\H_{0,\bar \chi}})  \nonumber  \\
 =
 (\bar \lambda +1)^{-1}  \frac{d}{d\chi}   (  \scp{ u_{   \chi, h}  }{u_{\chi , l}   }_{\H_{0,\chi}} )=  (\bar \lambda +1)^{-1} \lambda \frac{d}{d\chi} \delta_{hl}=0.
 \end{eqnarray}

  Using formula \eqref{dermat} we have
\begin{eqnarray} \lefteqn{ \frac{d}{d\chi}{\mathcal{S}}_{hl} =  \scp{\frac{d}{d\chi}R_{\chi}   u_{  \bar  \chi, h}  }{   u_{\bar \chi , l}   }_{\H_{0,\bar \chi}}=
 \scp{\frac{d}{d\chi}  \left(\TAc+ \Jc   \right)^{-1}\circ \Jc u_{  \bar  \chi, h}  }{   u_{\bar \chi , l}   }_{\H_{0,\bar \chi}} } \nonumber \\
 & & \quad
=-
 \scp{ \left(\TAtc+ \Jtc   \right)^{-1} \circ \frac{d}{d\chi}    \left(\TAc+ \Jc   \right)  \circ    \left(\TAtc+ \Jtc   \right)^{-1}  \circ \Jtc u_{  \bar  \chi, h}  }{   u_{\bar \chi , l}   }_{\H_{0,\bar \chi}}\nonumber   \\
& &\quad  +   \scp{  \left(\TAtc+ \Jtc   \right)^{-1}\circ   \frac{d}{d\chi} \Jc u_{  \bar  \chi, h}  }{   u_{\bar \chi , l}   }_{\H_{0,\bar \chi}} \nonumber  \\
& &\quad  = -(\bar \lambda +1)^{-2}
  \scp{ \frac{d}{d\chi}    \left(\TAc+ \Jc   \right)  u_{  \bar  \chi, h}  }{   u_{\bar \chi , l}   }   +  (\bar \lambda +1)^{-1} \scp{   \frac{d}{d\chi} \Jc u_{  \bar  \chi, h}  }{   u_{\bar \chi , l}   }.
\end{eqnarray}
Then the derivatives of the functions $ \zeta_{\chi, k}, \dots ,  \zeta_{\chi, k+m-1}$ are given by the eigenvalues of the matrix  $\frac{d}{d\chi}{\mathcal{S}}$ multiplied by $-(\bar \lambda +1)^2$ i.e., is the matrix with the following entries
\begin{eqnarray}
  \scp{ \frac{d}{d\chi}    \left(\TAc+ \Jc   \right)  u_{  \bar  \chi, h}  }{   u_{\bar \chi , l}   }  -  (\bar \lambda +1)\scp{   \frac{d}{d\chi} \Jc u_{  \bar  \chi, h}  }{   u_{\bar \chi , l}   }  \nonumber \\
  =  \scp{ \frac{d}{d\chi}    \TAc \ u_{  \bar  \chi, h}  }{   u_{\bar \chi , l}   } -  \bar \lambda \scp{  \frac{d}{d\chi}    \Jc u_{  \bar  \chi, h}  }{   u_{\bar \chi , l}   }
\end{eqnarray}
which coincides with those in the statement.
\end{proof}

\begin{rem}[\bf  Hellmann-Feymann Theorem] \label{rulethumb}
   Recall that the eigenvalues can be represented by means of the Rayleigh quotient as  follows
\begin{equation}
\label{ray}\lambda_{\chi ,k}=\frac{   \scp{\A_{\chi} u_{    \chi, k}}{\A_{\chi} u_{   \chi, k}}_{\H_{1,\chi}}
}{   \scp{ u_{   \chi, k}}{ u_{   \chi, k}}_{\H_{0,\chi}}   }  .
\end{equation} 
Assume that $\bar\lambda=\lambda_{\bar \chi ,k}$ is a simple eigenvalue.  Then  $\lambda_{ \chi ,k}$ is also simple for $\chi $ close to $\bar \chi$ and the derivative of  $\lambda_{ \chi ,k}$ at the point $\bar\chi$ is obtained by differentiating the quotient
\begin{equation}\label{rule1}\frac{   \scp{\A_{\chi} u_{  \bar  \chi, k}}{\A_{\chi} u_{  \bar  \chi, k}}_{\H_{1,\chi}}
}{   \scp{ u_{  \bar  \chi, k}}{ u_{  \bar  \chi, k}}_{\H_{0,\chi}}   }
\end{equation} 
with respect to $\chi$ at the point $\bar \chi$.  If $u_{  \bar\chi, k}$ is normalized by $  \scp{ u_{\bar\chi, k}  }{u_{\bar\chi , k}   }_{\H_{0,\bar \chi}}=1$ then such derivative in the direction $\dot\chi$  at the point $\bar \chi$ is given by 
\begin{equation}\label{rule2}
{\partial_{\dot\chi}\lambda_{\chi ,k}}_{\big\rvert_{\chi=\bar\chi}}=  \partial_{\dot\chi}   \scp{\A_{\chi} u_{  \bar  \chi, k}}{\A_{\chi} u_{  \bar  \chi, k}}_{\H_{1,\chi}} -  \bar \lambda  \partial_{\dot\chi}  \scp{ u_{  \bar  \chi, k}}{ u_{  \bar  \chi, k}}_{\H_{0,\chi}}  ,
\end{equation}
see formula \eqref{matrixrellichtrace}.
For multiple eigenvalues and $\chi$ real,  the derivatives of the branches of eigenvalues splitting $\bar \lambda $ are given by the matrix obtained in the same way, as in formula 
\eqref{matrixrellich}.

We note that formula \eqref{rule2} can be obtained in a direct way by assuming apriori that a normalized eigenfunction
$u_{    \chi, k}$ corresponding to $\lambda_{ \chi ,k}$  is differentiable with respect to $\chi$ at the point $\bar \chi$. Indeed, by differentiating formula \eqref{ray} and taking into account the normalization of  $u_{   \chi, k}$  we get
\begin{eqnarray*}\lefteqn{
{\partial_{\dot\chi}\lambda_{\chi ,k}}_{\big\rvert_{\chi=\bar\chi}}  =  \partial_{\dot\chi} \scp{\A_{\chi} u_{   \bar \chi, k}}{\A_{\chi} u_{ \bar  \chi, k}}_{\H_{1,\chi}}     +
\scp{\A_{\bar \chi} \partial_{\dot\chi} u_{    \chi, k}}{\A_{\bar\chi} u_{  \bar  \chi, k}}_{\H_{1,\bar\chi}}+\scp{\A_{\bar \chi} u_{   \bar \chi, k}}{\A_{\bar\chi} \partial_{\dot\chi} u_{   \chi, k}}_{\H_{1,\bar\chi}}  } \\ & & 
-\bar\lambda \left(  \partial_{\dot\chi}\scp{ u_{  \bar \chi, k}}{ u_{  \bar \chi, k}}_{\H_{0,\chi}}   +\scp{  \partial_{\dot\chi}u_{   \chi, k}}{ u_{   \chi, k}}_{\H_{0,\bar\chi}}           +\scp{ u_{   \chi, k}}{ \partial_{\dot\chi} u_{   \chi, k}}_{\H_{0,\bar\chi}}   \right)\\
& & 
= \partial_{\dot\chi} \scp{\A_{\chi} u_{   \bar \chi, k}}{\A_{\chi} u_{ \bar  \chi, k}}_{\H_{1,\chi}}   
 -\bar\lambda  \partial_{\dot\chi}\scp{ u_{  \bar \chi, k}}{ u_{  \bar \chi, k}}_{\H_{0,\chi}}  \\
& &  +2\Re\left( \scp{\A_{\bar \chi} u_{   \bar \chi, k}}{\A_{\bar\chi} \partial_{\dot\chi} u_{   \chi, k}}_{\H_{1,\bar\chi}}   -\bar\lambda\scp{ u_{   \chi, k}}{ \partial_{\dot\chi} u_{   \chi, k}}_{\H_{0,\bar\chi}}   \right)
 = \partial_{\dot\chi} \scp{\A_{\chi} u_{   \bar \chi, k}}{\A_{\chi} u_{ \bar  \chi, k}}_{\H_{1,\chi}}   \\ & & 
 -\bar\lambda  \partial_{\dot\chi}\scp{ u_{  \bar \chi, k}}{ u_{  \bar \chi, k}}_{\H_{0,\chi}}  
 +2\Re \scp{\A_{\bar\chi} ^* \A_{\bar \chi} u_{   \bar \chi, k} -\bar\lambda   u_{   \bar \chi, k} }{ \partial_{\dot\chi} u_{   \chi, k}}_{\H_{1,\bar\chi}}    \\
 & & 
 = \partial_{\dot\chi} \scp{\A_{\chi} u_{   \bar \chi, k}}{\A_{\chi} u_{ \bar  \chi, k}}_{\H_{1,\chi}}   
 -\bar\lambda  \partial_{\dot\chi}\scp{ u_{  \bar \chi, k}}{ u_{  \bar \chi, k}}_{\H_{0,\chi}}  
\end{eqnarray*} 
where we have used the fact that $  \A_{\bar\chi} ^* \A_{\bar \chi} u_{   \bar \chi, k} =\bar\lambda   u_{   \bar \chi, k} $. This formal procedure was used in the proof of our Theorem~4.1 in Part I of this series of papers and it is essentially the same used in the classical Hellmann-Feynman theorem. We refer to  \cite{este} for a recent discussion on this topic and references. 
\end{rem}

\section{Applications to the De Rham Complex}
\label{applications}

In this section we apply the abstract results obtained in Section \ref{perturbtheory} to the Maxwell and Helmholtz problems \eqref{maxprobl}, \eqref{helmprobl}. To do so, we shall make the following assumptions:

\begin{itemize}
\item the space of the parameters $\chi$  is a generic  real Banach space $X$ and in particular $\chi$ belongs to an open subset $\mathcal{X}$ of $X$; in the case of the Rellich-type Theorems \ref{theo:shapederivatives},  \ref{theo:shapederivativeshe} we shall consider the particular case $X=\mathbb{R}$;
\item we consider a family of  bi-Lipschitz transformations $\{\Pc\}_{\chi \in \mathcal{X}}\subset  \mathcal{L}(\Omega) $ where $  \mathcal{L}(\Omega)$ is as in the Introduction,  and we assume that the map $\chi \mapsto \Phi_{\chi}$
is of class $\C^r$ with $r\in \nat \cup\{\infty\}\cup\{\omega\}$;
\item we shall use the following notation:
\begin{equation}\label{defpsi}
 \wt\Psi :=  {\partial_{\dot\chi}\Phi_{\chi}}_{\big\rvert_{\chi=\bar\chi}}  \qquad \text{and} \qquad \Psi :=  \wt\Psi\circ \Phi_{\bar\chi}^{(-1)}
\end{equation}
to denote the directional derivative of $\Phi_{\chi}$ at the point $\bar \chi$ in the direction $\dot\chi$ and its push-forward via $\Phi_{\bar\chi}$, respectively.  (Note that if $X=\mathbb{R}$ then we shall talk about derivative rather than directional derivative.) 
\item we assume in this section that   $\eps,\mu$, as well as to the scalar function $\nu$, are defined in the whole of $\mathbb{R}^3$  and they are at least of class $\C^1$.  Moreover, we assume that the (real valued symmetric) matrices $\eps (x),\mu (x)$ are positive-definite for all $x\in \mathbb{R}^3$ and that $\nu $ is also positive. 
\end{itemize}

Later we shall specify the Hilbert spaces associated with any $\chi$,  depending on the problem under consideration. Before doing so 
we need some technical lemmas.

\begin{lem}\label{epsreg}
Assume that $M: \mathbb{R}^3 \to \mathbb{R}^{3 \times 3}$ is a map  of class $\C^r$ with $r\in \nat \cup\{\infty\}\cup\{\omega\}$. 
Consider the map
$$\mathcal{M}: \C(\overline{\Omega}; \mathbb{R}^3) \to \C(\overline{\Omega}; \mathbb{R}^{3 \times 3})$$
$$\Phi \mapsto M \circ \Phi.$$
Then $\mathcal{M}$ is  of class $\C^r$  and its Fr\'{e}chet differential $d_\Phi \mathcal{M}$ at a point  $\Phi \in \C(\overline{\Omega}; \mathbb{R}^3)$, applied to   $\Psi \in \C(\overline{\Omega}; \mathbb{R}^3)$, is given by the formula
$$d_\Phi \mathcal{M} (\Psi) = \left( (\nabla M_{i,j} \circ \Phi) \cdot \Psi \right)_{1\leq i,j \leq 3}.$$
\end{lem}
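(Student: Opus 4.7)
The plan is to treat the statement entry-by-entry and reduce everything to the scalar case of a single component $M_{ij} : \mathbb{R}^3 \to \mathbb{R}$. Since the target $\C(\overline{\Omega};\mathbb{R}^{3\times 3})$ is a finite product of copies of $\C(\overline{\Omega})$, it suffices to show that each scalar composition operator $\mathcal{M}_{ij} : \Phi \mapsto M_{ij}\circ \Phi$ is of class $\C^r$ with Fr\'echet differential $d_\Phi\mathcal{M}_{ij}(\Psi) = (\nabla M_{ij}\circ \Phi)\cdot \Psi$. The basic technical tool I will use throughout is that for any continuous $G : \mathbb{R}^3 \to \mathbb{R}^N$ and any fixed $\Phi_0 \in \C(\overline{\Omega};\mathbb{R}^3)$, the image $\Phi_0(\overline{\Omega})$ is compact, so $G$ is uniformly continuous on a compact neighborhood of it, and this converts pointwise estimates into sup-norm estimates.

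For the first-order case $r=1$, which also identifies the differential, I would fix $\Phi\in \C(\overline{\Omega};\mathbb{R}^{3})$ and apply the fundamental theorem of calculus along the segment joining $\Phi(x)$ to $\Phi(x)+\Psi(x)$ for $\|\Psi\|_{\infty}$ small. This gives pointwise
\begin{equation*}
M_{ij}(\Phi(x)+\Psi(x)) - M_{ij}(\Phi(x)) - \nabla M_{ij}(\Phi(x))\cdot\Psi(x) = \int_0^1 \bigl(\nabla M_{ij}(\Phi(x)+t\Psi(x)) - \nabla M_{ij}(\Phi(x))\bigr)\cdot \Psi(x)\, dt.
\end{equation*}
Taking the supremum in $x$ and exploiting the uniform continuity of $\nabla M_{ij}$ on a compact neighborhood of $\Phi(\overline{\Omega})$ bounds the right-hand side by $\omega(\|\Psi\|_{\infty})\|\Psi\|_{\infty}$ for a modulus $\omega$ vanishing at $0^+$, proving Fr\'echet differentiability with the stated formula. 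Continuity of $\Phi \mapsto d_\Phi \mathcal{M}_{ij}$ in $\mathcal{L}(\C(\overline{\Omega};\mathbb{R}^3),\C(\overline{\Omega}))$ then follows from the same uniform-continuity argument applied to $\nabla M_{ij}$ alone.

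For higher regularity $r\in\nat\cup\{\infty\}$ I would proceed by induction on $r$. The key observation is that $\Phi\mapsto d_{\Phi}\mathcal{M}_{ij}$ can be written, component by component, as the composition operator $\Phi\mapsto \partial_k M_{ij}\circ\Phi$ post-composed with the bounded bilinear pairing $(v,\Psi)\mapsto v\Psi_k$. Since $\partial_k M_{ij}$ is of class $\C^{r-1}$, the inductive hypothesis ensures that $d\mathcal{M}_{ij}$ is of class $\C^{r-1}$, hence $\mathcal{M}_{ij}$ is of class $\C^r$; the case $r=\infty$ follows at once. For the analytic case $r=\omega$, I would instead expand $M_{ij}$ in a convergent power series centered at each point of $\Phi_0(\overline{\Omega})$, use compactness to extract a uniform radius of convergence $\rho>0$, and show that the formal series $\sum_{|\alpha|\ge 0}\tfrac{1}{\alpha!}(D^{\alpha}M_{ij}\circ\Phi_0)(\Phi-\Phi_0)^{\alpha}$ converges absolutely in $\C(\overline{\Omega})$ whenever $\|\Phi-\Phi_0\|_{\infty}<\rho$, exhibiting $\mathcal{M}_{ij}$ as real-analytic near $\Phi_0$.

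The main obstacle is essentially bookkeeping: the higher multilinear differentials produced by the induction must be matched with the expected chain-rule expressions, and in the analytic case the formal Taylor series at $\Phi_0$ has to be shown to converge as a series of multilinear operators on $\C(\overline{\Omega};\mathbb{R}^3)$. Both issues reduce, however, to the same uniform-continuity/compactness principle used in the $r=1$ step, applied separately to each partial derivative $D^{\alpha}M_{ij}$, so no conceptually new idea is needed beyond that case.
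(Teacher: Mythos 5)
Your argument is correct, but it takes a genuinely different route from the paper's: the paper does not prove the lemma directly at all, it deduces it as a limiting case ($m=\alpha=\beta=0$) of Theorem~2.10 and Proposition~2.17 of Lanza de Cristoforis's work on composition operators in Schauder spaces, merely remarking that $\C(\overline{\Omega};\mathbb{R})$ is a Banach algebra and that an analytic $M$ restricted to a compact set lies in the required Roumieu class. What you do instead is give a self-contained elementary proof: componentwise reduction, the fundamental theorem of calculus along the segment plus uniform continuity of $\nabla M_{ij}$ on a compact neighbourhood of $\Phi(\overline{\Omega})$ for the $r=1$ case, a clean induction expressing $d\mathcal{M}_{ij}$ as the composition operator for $\partial_k M_{ij}$ followed by a bounded bilinear pairing for finite $r$, and a uniform-radius Taylor expansion (via compactness and Cauchy estimates) for $r=\omega$, where the Banach algebra structure of $\C(\overline{\Omega})$ is exactly what makes the homogeneous terms $(\Phi-\Phi_0)^{\alpha}$ well-defined and norm-summable. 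Your approach buys transparency and independence from the cited machinery at the cost of some bookkeeping; the paper's approach buys brevity and situates the lemma inside a framework that also covers the Schauder and Sobolev settings needed elsewhere in that literature. The only place where I would urge a little extra care is the analytic step: to get the uniform coefficient bound $|D^{\alpha}M_{ij}(y)|\le C\,\alpha!/\rho^{|\alpha|}$ on a compact neighbourhood you should pass through the holomorphic extension of $M_{ij}$ to a complex polydisc neighbourhood of $\Phi_0(\overline{\Omega})$ and apply the Cauchy estimates there, rather than arguing from real Taylor remainders alone; with that in place the absolute convergence of the series of homogeneous polynomials on a ball in $\C(\overline{\Omega};\mathbb{R}^3)$, and hence analyticity, follows as you describe.
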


\begin{proof}   This statement can be deduced as a limiting case of Theorem 2.10 and Proposition 2.17 in \cite{LA1998} (in the notation of  \cite{LA1998} one should set $m=\alpha=\beta=0$) the proof being obtained in the same way as a simpler application of the abstract results in \cite{LA2000} (for that purpose note that if $M$ is analytic then its restriction to any compact set belongs in the Roumieu class as required in Proposition 2.17 in \cite{LA1998}, see e.g., \cite[Thm.~2.17]{DALAMU2021}). We also refer to \cite{valent} for a direct approach to differentiability and analyticity results for the composition operator in Sobolev and Schauder spaces.   Note that $\C(\overline{\Omega}; \mathbb{R})$ is a Banach algebra (an important property for the proof of these results) regardless of the boundary regularity of $\Omega$. \end{proof}

Recall that $\tilde \eps= \eps \circ \Phi_\chi$.
We apply Lemma \ref{epsreg} to the maps  $\eps,\mu$ as well as to the scalar function $\nu$ (recall that we assume that they are of class $\C^1$). In particular it follows that 
\begin{equation}
{\partial_{\dot\chi} \wt \eps}_{\rvert_{\chi=\bar\chi}} = \left( (\nabla \eps_{i,j} \circ \Phi_{\bar\chi}) \cdot \wt \Psi \right)_{1\leq i,j, \leq 3}.
\end{equation}
Then we set 
\begin{equation}\label{dzeta}
\partial_\Psi \eps:=\bigl( {\partial_{\dot\chi} \wt \eps}_{\rvert_{\chi=\bar\chi}}  \bigr) \circ \Phi_{\bar\chi}^{-1} = \left( \nabla \eps_{i,j} \cdot \Psi \right)_{1\leq i,j, \leq 3}.
\end{equation}
Similarly, we set 
\begin{equation}\label{dzetamu}
\partial_\Psi \mu^{-1}:=\bigl( {\partial_{\dot\chi} \wt \mu^{-1}}_{\rvert_{\chi=\bar\chi}} \bigr) \circ \Phi_{\bar\chi}^{-1} = \left( \nabla \mu^{-1}_{i,j} \cdot \Psi \right)_{1\leq i,j, \leq 3}
\end{equation}
and 
\begin{equation*}
\partial_\Psi \nu :=  \bigl( {\partial_{\dot\chi} \wt \nu}_{\rvert_{\chi=\bar\chi}}  \bigr)  \circ \Phi_{\bar\chi}^{-1} =  \nabla \nu \cdot \Psi,
\end{equation*}
where $\wt\Psi$ and $\Psi$ are defined in \eqref{defpsi}.

Recall from the introduction the following definitions:
\begin{align*}
\eps_{\Phi}
=\tau^{2}_{\Phi}\eps\tau^{1}_{\Phi^{-1}}
&=(\det J_{\Phi})J_{\Phi}^{-1}\wt{\eps}J_{\Phi}^{-\top},
&
\nu_{\Phi}
=\tau^{3}_{\Phi}\nu\tau^{0}_{\Phi^{-1}}
&=(\det J_{\Phi})\wt{\nu},\\
\mu_{\Phi}^{-1}
=\tau^{1}_{\Phi}\mu^{-1}\tau^{2}_{\Phi^{-1}}
&=(\det J_{\Phi})^{-1}J_{\Phi}^{\top}\wt{\mu^{-1}}J_{\Phi}.
\end{align*}

\begin{lem} \label{lemmafrechet}   Assume that the maps $\nu:\mathbb{R} \to \mathbb{R}$,  $\eps , \mu : \mathbb{R}^3 \to \mathbb{R}^{3 \times 3}$ as well as
 the map $\chi \mapsto \Phi_{\chi}$
are  of  class $\C^r$ with $r\in \nat \cup\{\infty\} \cup\{\omega\}$.  Then  the maps
$
\chi \mapsto J_{\Phi_\chi},\  J_{\Phi_\chi}^{-1}, \eps_{\Phi_\chi},\  \mu^{-1}_{\Phi_\chi}
$
from $\mathcal{X}$ to  $\L^{\infty}(\Omega; \mathbb{R}^{3\times 3})$ and the maps 
 $\chi \mapsto \operatorname{det} J_{\Pc}, \ (\operatorname{det} J_{\Pc})^{-1}, \nu_{\Phi_\chi}$ from $\mathcal{X}$ to   $\L^{\infty}(\Omega; \mathbb{R})$
 are of class $\C^r$.  
\end{lem}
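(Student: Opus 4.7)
The plan is to reduce every claim to a finite combination of three primitive operations: (i) the bounded linear map $\Phi\mapsto(\na\Phi)^{\top}$ from $\C^{0,1}(\omb;\reals^{3})$ into $\L^{\infty}(\om;\reals^{3\times 3})$, which is well-defined by Rademacher's theorem; (ii) pointwise multiplication in the Banach algebra $\L^{\infty}(\om;\reals^{3\times 3})$ (and its scalar counterpart $\L^{\infty}(\om;\reals)$), which is bilinear and continuous, hence real-analytic; and (iii) the inversion $t\mapsto 1/t$ on the open subset of $\L^{\infty}(\om;\reals)$ consisting of functions bounded away from zero in the essential infimum, which is real-analytic. Since the composition of a $\C^{r}$ map with a real-analytic map is of class $\C^{r}$, it will suffice to exhibit each object in the statement as such a composition.

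First, the map $\chi\mapsto J_{\Pc}$ is the composition of the $\C^{r}$ map $\chi\mapsto\Pc$ (from $\mathcal{X}$ into $\C^{0,1}(\omb;\reals^{3})$) with the operator in (i), and hence is of class $\C^{r}$ into $\L^{\infty}(\om;\reals^{3\times 3})$. The determinant is a polynomial in the entries of $J_{\Pc}$, so applying (ii) finitely many times yields that $\chi\mapsto\det J_{\Pc}$ is $\C^{r}$ into $\L^{\infty}(\om;\reals)$. The admissibility condition $\mathrm{ess}\inf\det J_{\Phi_{\bar\chi}}>0$, combined with the continuity just established, shows that $\det J_{\Pc}$ is uniformly bounded below by a positive constant on a neighborhood of $\bar\chi$; on this neighborhood, the operation in (iii) is well-defined and real-analytic, so $\chi\mapsto(\det J_{\Pc})^{-1}$ is $\C^{r}$. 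Because $J_{\Pc}^{-1}=(\det J_{\Pc})^{-1}\,\adj J_{\Pc}$ and the adjugate is a polynomial in the entries, $\chi\mapsto J_{\Pc}^{-1}$ is also of class $\C^{r}$.

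Next, I address the compositions with $\eps$, $\mu^{-1}$ and $\nu$. By Lemma~\ref{epsreg}, the Nemytskii operator $\Phi\mapsto M\circ\Phi$ is $\C^{r}$ from $\C(\omb;\reals^{3})$ to $\C(\omb;\reals^{3\times 3})$ when $M$ is $\C^{r}$, and an analogous statement holds for the scalar $\nu$. Composing with the $\C^{r}$ map $\chi\mapsto\Pc$ (which takes values in $\C^{0,1}(\omb;\reals^{3})\subset\C(\omb;\reals^{3})$) and with the continuous linear embedding $\C(\omb)\hookrightarrow\L^{\infty}(\om)$ gives that $\chi\mapsto\wt{\eps}$, $\chi\mapsto\wt{\mu^{-1}}$ and $\chi\mapsto\wt{\nu}$ are of class $\C^{r}$ into the appropriate $\L^{\infty}$ spaces.

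Finally, each quantity in the statement is a finite product, in the Banach algebra $\L^{\infty}$, of factors already shown to be $\C^{r}$:
\begin{align*}
\eps_{\Pc}&=(\det J_{\Pc})\,J_{\Pc}^{-1}\,\wt{\eps}\,J_{\Pc}^{-\top},\\
\mu^{-1}_{\Pc}&=(\det J_{\Pc})^{-1}\,J_{\Pc}^{\top}\,\wt{\mu^{-1}}\,J_{\Pc},\\
\nu_{\Pc}&=(\det J_{\Pc})\,\wt{\nu},
\end{align*}
and the continuity and bilinearity of pointwise multiplication yield the $\C^{r}$-regularity of each. The only genuinely delicate point in the argument is the inversion step, which requires the uniform positivity of $\det J_{\Pc}$ on a neighborhood of $\bar\chi$; this is the single place where the admissibility assumption enters in an essential way, and it follows immediately from the $\L^{\infty}$-continuity of $\chi\mapsto\det J_{\Pc}$ together with the hypothesis at $\bar\chi$.
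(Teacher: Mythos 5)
Your proof is correct and follows essentially the same route as the paper's: the Jacobian operator is linear and continuous hence analytic, the determinant, adjugate and inversion are analytic operations, the compositions $\wt\eps$, $\wt{\mu^{-1}}$, $\wt\nu$ are handled by Lemma~\ref{epsreg}, and everything else is a product in the Banach algebra $\L^{\infty}$. The only difference is that you make explicit the neighborhood argument guaranteeing $\det J_{\Pc}$ stays uniformly bounded away from zero before inverting, a point the paper's terser proof leaves implicit.
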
 
\begin{proof} It is well-known that the determinant and the inverse of a matrix define real analytic operators. Moreover, the Jacobian 
operator  from $\C^{0,1}(\overline{\Omega}; \mathbb{R}^3)$  to  $\L^{\infty}(\Omega; \mathbb{R}^{3\times 3})$ which takes  $\Phi$ to $J_\Phi$ is a linear and continuous map, hence real-analytic.  It follows by our assumptions on $\nu,\eps,\mu$  and by Lemma~\ref{epsreg} that  the maps in the statement  are of class $\C^r$ in the variable $\chi$ since they are compositions of analytic maps and  maps of class $\C^r$.
\end{proof}

\subsection{Maxwell Eigenvalues} \label{maxshapeder}

In this subsection we consider the Maxwell problem \eqref{maxprobl}.   As mentioned in the introduction, we shall prove a formula for the shape derivatives of its eigenvalues  under assumptions on $\Omega$ and $\Phi$ which are weaker than those used  in \cite{LZ2021a}. Note that  in \cite[Thm. 4.5]{LZ2021a} the authors require additional hypothesis on the regularity of the eigenvectors, assuming them of class $\H^2$. 
Here we only assume the validity of the Gaffney inequality on the reference domain $\Omega_\Phi$ around which we differentiate.  This  inequality   for  general mixed boundary conditions  on a domain $\Omega_\Phi$ reads as follows. There exists a $C>0$ such that 
\begin{subequations}
\label{gaffneytoomuch}
\begin{equation}
\label{gaffneytoomuchelec}
\norm{u}_{\H^1(\Omega_\Phi)} \leq C \left( \norm{u}_{\L^2(\Omega_\Phi)} + \norm{\rot u}_{\L^2(\Omega_\Phi)} + \norm{\div \eps u}_{\L^2(\Omega_\Phi)} \right)
\end{equation}
for all $u \in \R_{\gatp}(\Omega_\Phi) \cap \eps^{-1} \D_{\ganp} (\Omega_\Phi)$, and
\begin{equation}
\label{gaffneytoomuchmagn}
\norm{u}_{\H^1(\Omega_\Phi)} \leq C \left(\norm{u}_{\L^2(\Omega_\Phi)} + \norm{\rot \mu^{-1} u}_{\L^2(\Omega_\Phi)} + \norm{\div  u}_{\L^2(\Omega_\Phi)} \right)
\end{equation}
for all $u \in \mu\,\R_{\ganp}(\Omega_\Phi) \cap \D_{\gatp} (\Omega_\Phi)$.
\end{subequations}

\begin{rem}   Assume that  $\Gamma_t$ and $\Gamma_n$ are separated (in other words $\Gamma=\Gamma_t\cup\Gamma_n$ and 
$\ol{\ga}_{t}\cap\ol{\ga}_{n}=\emptyset$). 
Then if    $\Omega_\Phi$ is a bounded domain of  class $\C^{1,1}$ or a bounded  convex domain, and the coefficients $\eps,\mu$ are of class $\C^1$,  the Gaffney inequalities \eqref{gaffneytoomuchelec}, \eqref{gaffneytoomuchmagn} hold on  $\Omega_\Phi$, see \cite{filonov}. We note that assuming that  $\gatp$ and $\ganp$ can be swapped then  \eqref{gaffneytoomuchmagn} follows from \eqref{gaffneytoomuchelec}.
\end{rem}

In view of  the notation and the results of  Section~\ref{sec:shape2sound}, we need  to  specialise the function spaces and the operators as follows:
\begin{itemize}
\item  we fix the ambient Hilbert spaces  by setting $\H_0 = \H_1= \L^2(\Omega)$ and we consider also the 
Hilbert spaces  $\H_{0,\chi} = \L^{2}_{\eps_{{\Phi}_{\chi}}}(\om)$  and $\H_{1,\chi} = \L^{2}_{\mu_{{\Phi}_{\chi}}}(\om)$  depending on $\chi$. Note that changing the value of $\chi$ gives equivalent scalar products;
\item  we consider the operator 
$$\A_\chi:=\mu_{\Phi_{\chi}}^{-1}\rot_{\gat}:\R_{\gat}(\om)\subset\L^{2}_{\eps_{\Phi_{\chi}}}(\om)\to\L^{2}_{\mu_{\Phi_{\chi}}}(\om)$$
as the pull-back of the operator
$$\A:=\mu^{-1}\rot_{\gatpchi}:\R_{\gatpchi}(\Omega_{\Phi_{\chi}})\subset\L^{2}_{\eps}(\Omega_{\Phi_{\chi}})\to\L^{2}_{\mu}(\Omega_{\Phi_{\chi}});$$
recall  their adjoints
$$\A_\chi^{*}=\eps_{\Phi_{\chi}}^{-1}\rot_{\gan}:\R_{\gan}(\om)\subset\L^{2}_{\mu_{\Phi_{\chi}}}(\om) \to\L^{2}_{\eps_{\Phi_{\chi}}}(\om),$$
$$\A^{*}=\eps^{-1}\rot_{\ganpchi}:\R_{\ganpchi}(\Omega_{\Phi_{\chi}})\subset\L^{2}_{\mu}(\Omega_{\Phi_{\chi}})\to\L^{2}_{\eps}(\Omega_{\Phi_{\chi}});$$
\item we consider the operator $\TAc :\R_{\gat}(\om) \to \R_{\gat}(\om)'_{\sharp}$ defined by
\begin{equation*}
\begin{split}
\scp{\TAc F}{G}
= \scp{\A_\chi F}{\A_\chi G}_{\L^{2}_{\mu_{\Pc}}(\om)}
%= \bscp{\rot F}{ \rot G}_{\L^{2}_{\mu_{\Pc}^{-1}}(\om)} 
&=  \bscp{ \mu_{\Pc}^{-1}\rot F}{ \rot G}_{\L^{2}(\om)} \\
&=\bscp{(\det J_{\Phi_\chi})^{-1}\wt{\mu}^{-1}J_{\Phi_\chi}\rot F}{J_{\Phi_\chi}\rot G}_{\L^{2}(\om)}
\end{split}
\end{equation*}
for all $F,G \in \R_{\gat}(\om)$; recall that  $\wt \mu =\mu \circ \Phi_{\chi}$;
\item we also consider  the operator $\mathcal{I}_\chi: \L^2(\Omega) \to \R_{\gat}(\om)'_{\sharp}$ defined by
\begin{equation*}
\scp{\mathcal{I}_\chi  F}{G} = \scp{F}{G}_{\L^{2}_{\eps_{\Pc}}(\om)} 
= \scp{\eps_{\Pc}F}{G}_{\L^{2}(\om)}
= \bscp{(\det J_{\Phi_\chi})\wt{\eps}J_{\Phi_\chi}^{-\top}F}{J_{\Phi_\chi}^{-\top}G}_{\L^{2}(\om)}
\end{equation*}
for all $F\in\L^2(\Omega)$ and $G\in \R_{\gat}(\om)$; recall that  $\wt \eps =\eps \circ \Phi_{\chi}$.
\end{itemize}

For the convenience of the reader, we recall that the transformation
\begin{equation*}
\tau^1_{\Phi_{\chi}}: \L^2_\eps(\Omega_{\Phi_{\chi}}) \to \L^2_{\eps_{\Phi_{\chi}}}(\Omega), \quad 
\tau^1_{\Phi_{\chi}}F = J_{\Phi_{\chi}}^\top \wt F
\end{equation*}
is an isometry, and if  $F \in D(\A)= \R_{\gatpchi}(\Omega_{\Phi_{\chi}})$,
 then $\tau^1_{\Phi_{\chi}}F \in  D(\A_{\chi}) = \R_{\gat}(\om)$ and 
\begin{equation*}
\rot  \tau^1_\Phi F = \tau^2_\Phi \rot F= (\adj J_\Phi)\widetilde{\rot T}.
\end{equation*} 

As in the previous section,  for a quantity $f_{\chi}$ depending on $\chi\in \mathcal X$,  we shall  use the symbol ${\partial _{\dot \chi}f_{\chi}}_{\vert_{\chi=\bar\chi}}$ (or  simply ${\partial _{\dot \chi}f_{\chi}}$)   to denote  the directional derivative of $f_{\chi}$ at  the fixed point $\bar \chi\in \mathcal X$, in the direction $\dot\chi\in X$.   Moreover,  for the following lemma we use the notation in \eqref{dzeta} and \eqref{dzetamu}. 

\begin{lem} \label{lemmaderchi} Assume that the maps $\eps, \mu : \mathbb{R}^3 \to \mathbb{R}^{3 \times 3}$, as well as
 the map $\chi \mapsto \Phi_{\chi}$,
are  of  class $\C^r$ with $r\in \nat \cup\{\infty\} \cup\{\omega\}$. 
Then the maps 
$\chi \mapsto \scp{   \mathcal{I}_\chi F  }{   G   }$ and $\chi \mapsto \scp{\mathcal{T}_\chi F }{G} $ are of class $\C^r$ and 
the following statements hold:
\begin{itemize}
\item[(i)] For all $F,G \in \L^2(\Omega)$ 
\begin{equation} \label{derIcal}
{\partial_{\dot \chi} \scp{   \mathcal{I}_\chi F  }{   G   } }_{\rvert_{\chi=\bar \chi}}= \scp{ \left(
\p_{\Psi}\eps+ (\div\Psi) \eps -2\sym(J_{\Psi}\eps) \right) \tau_{\Phi_{\bar\chi}^{-1}}^1 F  }{   \tau_{\Phi_{\bar\chi}^{-1}}^1 G   }_{\L^{2}(\Omega_{\Phi_{\bar\chi}})}.
\end{equation}
\item[(ii)] For all $F,G \in \R_{\gat}(\Omega)$
\begin{equation} \label{derTau}
{\partial_{\dot\chi}\scp{\mathcal{T}_\chi F }{G}}_{\rvert_{\chi=\bar \chi}} = \scp{ \left( \p_{\Psi}\mu^{-1}-(\mu^{-1}\div\Psi)+2\sym(\mu^{-1}J_{\Psi}) \right) \rot_{\ga_{\!t,\Phi_{\bar\chi}} } \tau_{\Phi_{\bar\chi}^{-1}}^1 F}{\rot_{\ga_{\!t,\Phi_{\bar\chi}} } \tau_{\Phi_{\bar\chi}^{-1}}^1 G}_{\L^{2}(\Omega_{\Phi_{\bar\chi}})}.
\end{equation}
\end{itemize}
\end{lem}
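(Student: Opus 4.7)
The $\C^r$ regularity of the two maps is a direct consequence of Lemma~\ref{lemmafrechet}: by their very definitions, $\scp{\Jc F}{G}$ and $\scp{\TAc F}{G}$ are continuous multilinear pairings of $\L^{\infty}(\om)$-valued quantities---$J_{\Pc}^{\pm\top}$, $\det J_{\Pc}$, $(\det J_{\Pc})^{-1}$, $\wt\eps$ or $\wt\mu^{-1}$---each of which depends on $\chi$ in class $\C^r$ by Lemma~\ref{lemmafrechet}, and composition with continuous multilinear maps preserves $\C^r$ regularity.

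To prove \eqref{derIcal} I expand $\scp{\Jc F}{G}=\bscp{(\det J_{\Pc})\wt\eps J_{\Pc}^{-\top}F}{J_{\Pc}^{-\top}G}_{\L^2(\om)}$ and differentiate at $\chi=\bar\chi$ by the product rule, collecting four contributions from the variations of $\det J_{\Pc}$, of $\wt\eps$, and of the two copies of $J_{\Pc}^{-\top}$. Writing $\bar J:=J_{\Phi_{\bar\chi}}$, the elementary building blocks are Jacobi's formula and the standard derivative of the inverse,
\[
\p_{\dot\chi}\det J_{\Pc}\big|_{\bar\chi}=(\det\bar J)\tr(\bar J^{-1}J_{\wt\Psi}),\qquad
\p_{\dot\chi}J_{\Pc}^{-\top}\big|_{\bar\chi}=-\bar J^{-\top}J_{\wt\Psi}^{\top}\bar J^{-\top},
\]
together with $\p_{\dot\chi}\wt\eps|_{\bar\chi}=(\p_\Psi\eps)\circ\Phi_{\bar\chi}$ from Lemma~\ref{epsreg} and \eqref{dzeta}. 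The next step is to perform the change of variables $y=\Phi_{\bar\chi}(x)$, which converts $\bar J^{-\top}F$ into $\tau^1_{\Phi_{\bar\chi}^{-1}}F$, absorbs one factor $\det\bar J$ into $dy$, and---through the chain-rule identity $J_{\wt\Psi}=(J_\Psi\circ\Phi_{\bar\chi})\bar J$ coming from $\wt\Psi=\Psi\circ\Phi_{\bar\chi}$---collapses $\tr(\bar J^{-1}J_{\wt\Psi})$ into $\div\Psi$ and $\bar J^{-\top}J_{\wt\Psi}^{\top}\bar J^{-\top}F$ into $J_\Psi^{\top}\tau^1_{\Phi_{\bar\chi}^{-1}}F$ on $\om_{\Phi_{\bar\chi}}$. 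Finally, the two contributions coming from the two $J_{\Pc}^{-\top}$-variations combine into $-\scp{(\eps J_\Psi^{\top}+J_\Psi\eps)\wt F}{\wt G}_{\L^2(\om_{\Phi_{\bar\chi}})}$, where $\wt F:=\tau^1_{\Phi_{\bar\chi}^{-1}}F$ and similarly for $\wt G$; using the symmetry of $\eps$, one has $\eps J_\Psi^{\top}+J_\Psi\eps=2\sym(J_\Psi\eps)$, which produces the $-2\sym(J_\Psi\eps)$ term in \eqref{derIcal}.

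Formula \eqref{derTau} is established along identical lines for $\scp{\TAc F}{G}=\bscp{(\det J_{\Pc})^{-1}\wt\mu^{-1}J_{\Pc}\rot F}{J_{\Pc}\rot G}_{\L^2(\om)}$, with the additional elementary variations $\p_{\dot\chi}(\det J_{\Pc})^{-1}|_{\bar\chi}=-(\det\bar J)^{-1}\tr(\bar J^{-1}J_{\wt\Psi})$ and $\p_{\dot\chi}\wt\mu^{-1}|_{\bar\chi}=(\p_\Psi\mu^{-1})\circ\Phi_{\bar\chi}$. To express the result on $\om_{\Phi_{\bar\chi}}$ I invoke the commutation identity $\rot\tau^1_{\Phi_{\bar\chi}^{-1}}H=\tau^2_{\Phi_{\bar\chi}^{-1}}\rot H=((\det\bar J)^{-1}\bar J\,\rot H)\circ\Phi_{\bar\chi}^{-1}$, which rewrites each factor $\bar J\,\rot F$ (resp.\ $\bar J\,\rot G$), after the change of variables, as $\det\bar J\cdot\rot_{\ga_{\!t,\Phi_{\bar\chi}}}\wt F$ (resp.\ $\det\bar J\cdot\rot_{\ga_{\!t,\Phi_{\bar\chi}}}\wt G$); the $\gat$-boundary condition is carried along automatically since $\tau^1$ preserves it. The two $J_{\Pc}$-variations then combine, by the symmetry of $\mu^{-1}$, into $2\sym(\mu^{-1}J_\Psi)$, while the $(\det J)^{-1}$-variation produces the $-\mu^{-1}\div\Psi$ contribution, yielding \eqref{derTau}.

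The only genuine obstacle is bookkeeping: one must keep the numerous factors of $\det\bar J$, $\bar J$, and $\bar J^{-\top}$ straight through successive pull-back and push-forward, and verify that the chain-rule identity $J_{\wt\Psi}=(J_\Psi\circ\Phi_{\bar\chi})\bar J$ systematically collapses each raw derivative on $\om$ into the intrinsic object $\div\Psi$, $J_\Psi$, or $J_\Psi^{\top}$ read on the physical domain $\om_{\Phi_{\bar\chi}}$.
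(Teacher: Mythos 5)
Your argument is correct and follows essentially the same route as the paper: the paper likewise obtains the $\C^r$ regularity from Lemma~\ref{lemmafrechet} and then differentiates the explicit expressions $\scp{\Jc F}{G}=\bscp{(\det J_{\Pc})\wt{\eps}J_{\Pc}^{-\top}F}{J_{\Pc}^{-\top}G}_{\L^{2}(\om)}$ and $\scp{\TAc F}{G}$ before changing variables back to $\Omega_{\Phi_{\bar\chi}}$ via $\tau^{1}_{\Phi_{\bar\chi}^{-1}}$ and $\tau^{2}_{\Phi_{\bar\chi}^{-1}}$. The only cosmetic difference is that the paper imports the already-packaged formulas for $\p_{\dot\chi}\eps_{\Phi_{\chi}}$ and $\p_{\dot\chi}\mu_{\Phi_{\chi}}^{-1}$ from the appendix of Part I, whereas you rederive them term by term (Jacobi's formula, derivative of the inverse, and the chain-rule identity $J_{\wt\Psi}=(J_{\Psi}\circ\Phi_{\bar\chi})J_{\Phi_{\bar\chi}}$), which is exactly the computation behind those cited formulas.
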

\begin{proof}
The regularity is easily deduced by Lemma \ref{lemmafrechet}. By the appendix of the first part we have
$$
{\p_{\dot \chi} \eps_{\Phi_{\chi}}}_{\rvert_{\chi=\bar \chi}}
=(\det J_{\Phi_{\bar\chi}})J_{\Phi_{\bar\chi}}^{-1}\Big(
\p_{\dot\chi }\wt{\eps}+(\wt{\div\Psi})\wt{\eps}-2\sym(\wt{J_{\Psi}}\wt{\eps})
\Big)J_{\Phi_{\bar\chi}}^{-\top}
$$
and
$$
{\p_{\dot\chi   }\mu_{\Phi_{\chi}}^{-1}}_{\rvert_{\chi=\bar \chi}}
=(\det J_{\Phi_{\bar\chi}})^{-1}J_{\Phi_{\bar\chi}}^{\top}\Big(
\p_{\dot\chi}\wt{\mu^{-1}}-(\wt{\div\Psi})\wt{\mu^{-1}}+2\sym(\wt{\mu^{-1}}\wt{J_{\Psi}})
\Big)J_{\Phi_{\bar\chi}}.
$$
Since
\begin{equation*}
\partial_{\dot\chi} \scp{   \mathcal{I}_\chi F  }{G} = \scp{\partial_{\dot\chi} \eps_{\Pc}F}{G}_{\L^{2}(\om)} \qquad \text{and} \qquad \partial_{\dot\chi}\scp{\mathcal{T}_\chi F}{G} =  \bscp{ \partial_{\dot\chi }\mu_{\Pc}^{-1}\rot F}{ \rot G}_{\L^{2}(\om)},
\end{equation*}
by changing variables and recalling that
\begin{equation*}
\tau^1_{\Phi^{-1}} F = \left( J_{\Phi}^{-\top} F \right) \circ \Phi^{-1}, \qquad \rot \tau^1_{\Phi^{-1}} F = \tau^2_{\Phi^{-1}} \rot F = \left((\det J_{\Phi})^{-1} J_{\Phi} \rot F \right) \circ \Phi^{-1}.
\end{equation*}
we conclude.
\end{proof}

By combining Theorems~\ref{symtheo} and Lemma~\ref{lemmaderchi}  we immediately deduce the following theorem. Note that no additional boundary regularity assumptions are used here. 
\begin{theo}\label{symtheomax}
Let $r\in \nat \cup\{\infty\}\cup\{\omega\}$ and  let $\eps$, $\mu$, and $\nu$ be of class $\C^r$. 
Let $\Omega$ be a bounded Lipschitz domain in $\mathbb{R}^3$ and let  $\{\Pc\}_{\chi \in \mathcal{X}}$
 be a family of  bi-Lipschitz transformations   $\Phi_\chi \in \mathcal{L}(\Omega)$  such that the map $\chi \mapsto \Phi_{\chi}$
is of class $\C^r$.  Let $\bar \chi\in \mathcal{X}$ be fixed.  Let $\bar \lambda$ be an eigenvalue of   the  Maxwell problem on $\Omega_{\Phi_{\bar\chi}}$. Assume $\bar \lambda$ has multiplicity $m$ and let
 $\bar\lambda=\lambda_{\bar\chi,k}=\dots = \lambda_{\bar\chi, k+m-1}$ for some $k \in \nat$. Let $F=\{k,k+1, \dots, k+m-1\}$ and let $\Lambda_{F,s}$ be the elementary symmetric functions of the Maxwell eigenvalues   defined as in \eqref{symfunct}. Then the functions $\Lambda_{F,s}$ are of class $\C^r$ in a neighborhood of $\bar\chi$ and  their directional derivatives   at the point $\bar \chi$ in the direction $\dot\chi$  are given by the  formula
\begin{equation*} 
\begin{split} 
& \bar \lambda ^{s-1}\binom{m-1}{s-1}\sum_{h=1}^m
\Big( \scp{ \left( \p_{\Psi}\mu^{-1}-(\mu^{-1}\div\Psi)+2\sym(\mu^{-1}J_{\Psi}) \right) \rot_{\ga_{\!t,\Phi_{\bar\chi}}} E_{\bar\chi,h}}{\rot_{\ga_{\!t,\Phi_{\bar\chi}}} E_{\bar\chi,h}}_{\L^{2}(\Omega_{\Phi_{\bar \chi}})} \\
&\qquad\quad\qquad\qquad\qquad\quad\qquad\qquad  - \bar \lambda \scp{ \left(
\p_{\Psi}\eps+ (\div\Psi) \eps -2\sym(J_{\Psi}\eps) \right)  E_{\bar\chi,h}  }{   E_{\bar\chi,h}   }_{\L^{2}(\Omega_{\Phi_{\bar\chi}})}\Big) \, ,
\end{split}
\end{equation*}
where  $\{E_{\bar\chi,h}\}_{h=1,\dots,m}$ is an orthonormal basis   in $\L^{2}_{\eps}(\Omega_{\Phi_{\bar\chi}})$ of the eigenspace  associated with $\bar\lambda$, and $\Psi$ is as in \eqref{defpsi}.
\end{theo}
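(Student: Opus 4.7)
The plan is to recognize that this theorem is essentially a corollary of the abstract Theorem~\ref{symtheo} once the Maxwell framework has been matched to the abstract setting, and then to convert the abstract derivatives into the claimed volume integrals using Lemma~\ref{lemmaderchi}. Accordingly, I would organize the proof into three steps.

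First, I would verify that the regularity hypothesis of Definition~\ref{reg} is satisfied in the Maxwell specialization defined in this subsection. The domain $\V = \R_{\gat}(\Omega)$ is independent of $\chi$, and the graph norms associated with $\A_\chi = \mu_{\Phi_\chi}^{-1} \rot_{\gat}$ are mutually equivalent because the matrices $\mu_{\Phi_\chi}^{-1}$ and $\eps_{\Phi_\chi}$ are uniformly bounded above and below on compact $\chi$-neighborhoods. The compactness of $D(\cA_\chi) \hookrightarrow \H_0$ is the standard Maxwell compact embedding on weak Lipschitz pairs recalled in Part~I. The maps $\chi \mapsto \TAc$ and $\chi \mapsto \Jc$ are, by their explicit definitions, polynomial expressions in $(\det J_{\Phi_\chi})^{\pm 1}$, $J_{\Phi_\chi}^{\pm 1}$, $\wt\mu^{-1}$, and $\wt\eps$; Lemma~\ref{lemmafrechet} guarantees that each of these ingredients is $\C^r$ into $\L^\infty(\Omega;\mathbb{R}^{3\times 3})$, so the bilinear forms $\TAc$ and $\Jc$ are $\C^r$ with values in $\mathcal{L}(\V,\V'_\sharp)$ and $\mathcal{L}(\H_0,\V'_\sharp)$ respectively.

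Second, I would invoke Theorem~\ref{symtheo} directly. This yields the $\C^r$-regularity of the symmetric functions $\Lambda_{F,s}$ near $\bar\chi$ and provides the abstract expression
\begin{equation*}
\partial_{\dot\chi}\Lambda_{F,s}[\chi]_{\rvert_{\chi=\bar\chi}}
= \bar\lambda^{s-1}\binom{m-1}{s-1}\sum_{h=1}^m \Bigl(
\partial_{\dot\chi}\scp{\A_\chi u_{\bar\chi,h}}{\A_\chi u_{\bar\chi,h}}_{\H_{1,\chi}}
- \bar\lambda\,\partial_{\dot\chi}\scp{u_{\bar\chi,h}}{u_{\bar\chi,h}}_{\H_{0,\chi}}\Bigr),
\end{equation*}
for any orthonormal basis $\{u_{\bar\chi,h}\}$ in $\H_{0,\bar\chi} = \L^2_{\eps_{\Phi_{\bar\chi}}}(\Omega)$ of the eigenspace associated with $\bar\lambda$. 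The two abstract pairings appearing here are, by construction, precisely $\scp{\TAc u_{\bar\chi,h}}{u_{\bar\chi,h}}$ and $\scp{\Jc u_{\bar\chi,h}}{u_{\bar\chi,h}}$.

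Third, I would translate each derivative via Lemma~\ref{lemmaderchi}. Applying (ii) with $F = G = u_{\bar\chi,h}$ rewrites the first term as the volume integral on $\Omega_{\Phi_{\bar\chi}}$ involving $\p_\Psi \mu^{-1} - (\mu^{-1}\div\Psi) + 2\sym(\mu^{-1} J_\Psi)$ acting on $\rot_{\ga_{\!t,\Phi_{\bar\chi}}}\tau^1_{\Phi_{\bar\chi}^{-1}} u_{\bar\chi,h}$, and applying (i) rewrites the second term as the integral of $\p_\Psi\eps + (\div\Psi)\eps - 2\sym(J_\Psi \eps)$ tested against $\tau^1_{\Phi_{\bar\chi}^{-1}} u_{\bar\chi,h}$. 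Since $\tau^1_{\Phi_{\bar\chi}^{-1}}: \L^2_{\eps_{\Phi_{\bar\chi}}}(\Omega) \to \L^2_\eps(\Omega_{\Phi_{\bar\chi}})$ is an isometry that carries the eigenspace of the pull-back problem to the eigenspace of the original Maxwell problem on $\Omega_{\Phi_{\bar\chi}}$, the functions $E_{\bar\chi,h} := \tau^1_{\Phi_{\bar\chi}^{-1}} u_{\bar\chi,h}$ form an orthonormal basis of the $\bar\lambda$-eigenspace in $\L^2_\eps(\Omega_{\Phi_{\bar\chi}})$, and substituting produces precisely the asserted formula.

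The only bookkeeping point that requires care — and which I view as the single minor obstacle rather than a deep one — is matching the normalization and the transport of eigenvectors across the pair of Hilbert spaces $\L^2_{\eps_{\Phi_{\bar\chi}}}(\Omega)$ and $\L^2_\eps(\Omega_{\Phi_{\bar\chi}})$ via $\tau^1_{\Phi_{\bar\chi}^{-1}}$, so that the abstract basis in Theorem~\ref{symtheo} corresponds to the physical basis $\{E_{\bar\chi,h}\}$ appearing in the statement. Once this identification is in place, the conclusion is a direct substitution and no additional regularity of $\Omega$ or $\Phi_{\bar\chi}$ beyond the bi-Lipschitz assumption is required.
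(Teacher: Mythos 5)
Your proposal is correct and follows exactly the route the paper takes: the paper deduces Theorem~\ref{symtheomax} "immediately" by combining the abstract Theorem~\ref{symtheo} with Lemma~\ref{lemmaderchi}, using the isometry $\tau^1_{\Phi_{\bar\chi}^{-1}}$ to transport the orthonormal eigenbasis (the same identification is spelled out in the paper's proof of Theorem~\ref{theo:shapederivatives}). Your additional verification of the $\C^r$-regularity hypothesis via Lemma~\ref{lemmafrechet} is exactly what the paper's setup of the Maxwell specialization relies on implicitly.
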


By Theorem~\ref{Rellichthm}   we also deduce the following theorem where in particular we recover the formulas in our Theorem~4.1 in Part I of this series of papers  when $\bar \lambda$ is simple. 

\begin{theo}  \label{theo:shapederivatives}
Let the same assumptions of Theorem~\ref{symtheomax} hold. Assume that $r=\{1,\omega\}$ and $X={\mathbb{R}}$.
Then for $\chi$ sufficiently close to $\bar \chi$, the eigenvalues  $\lambda_{\chi, k}, \dots ,$ $  \lambda_{\chi, k+m-1}$ can be represented by $m$      functions $\chi \mapsto \zeta_{\chi, k}, \dots , \zeta_{\chi, k+m-1}$ of class $\C^r$.  
Moreover the derivatives of those functions at $\bar \chi$ are given by the eigenvalues of the following matrix
\begin{equation} \label{formula:shapederivatives}
\begin{split}
&\Big( \scp{ \left( \p_{\Psi}\mu^{-1}-(\mu^{-1}\div\Psi)+2\sym(\mu^{-1}J_{\Psi}) \right) \rot_{\ga_{\!t,\Phi_{\bar\chi}}} E_{\bar\chi,h}}{\rot_{\ga_{\!t,\Phi_{\bar\chi}}} E_{\bar\chi,l}}_{\L^{2}(\Omega_{\Phi_{\bar \chi}})}  \\
& \qquad - \bar \lambda \scp{ \left(
\p_{\Psi}\eps+ (\div\Psi) \eps -2\sym(J_{\Psi}\eps) \right)  E_{\bar\chi,h}  }{   E_{\bar\chi,l}   }_{\L^{2}(\Omega_{\Phi_{\bar\chi}})} \Big)_{h,l=1,\dots,m},
\end{split}
\end{equation}
where  $\{E_{\bar\chi,h}\}_{h=1,\dots,m}$ is an orthonormal basis   in $\L^{2}_{\eps}(\Omega_{\Phi_{\bar\chi}})$ of the eigenspace  associated with $\bar\lambda$, and $\Psi$ is as in \eqref{defpsi}. 
\end{theo}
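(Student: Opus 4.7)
The plan is to obtain Theorem~\ref{theo:shapederivatives} as a direct specialization of the abstract Rellich Theorem~\ref{Rellichthm} to the Maxwell setting: almost all the technical input has been prepared in Lemmas~\ref{lemmafrechet} and \ref{lemmaderchi}, so the proof reduces to checking that the hypotheses of Theorem~\ref{Rellichthm} hold in this context, choosing a convenient orthonormal basis of the $\bar\lambda$-eigenspace, and translating the abstract matrix \eqref{matrixrellich} into the concrete form \eqref{formula:shapederivatives}.

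First I would verify the abstract hypotheses. The common domain $D(\A_\chi)=\R_{\gat}(\om)$ is independent of $\chi$, and the scalar products on $\L^2_{\eps_{\Pc}}(\om)$ and $\L^2_{\mu_{\Pc}}(\om)$ are equivalent, uniformly in $\chi$ locally around $\bar\chi$, to the standard $\L^2$ ones, because $\eps_{\Pc}$ and $\mu_{\Pc}^{-1}$ are uniformly bounded and uniformly positive thanks to $\operatorname{ess}\inf\det J_{\Pc}>0$ and the positivity of $\eps,\mu$; the corresponding graph norms are equivalent. The compactness of $D(\cA_\chi)\hookrightarrow\H_0$ is provided by Part I. Lemma~\ref{lemmafrechet} gives $\C^r$-regularity of $\chi\mapsto\eps_{\Pc},\mu_{\Pc}^{-1}$ in $\L^\infty(\om)$, so the maps $\chi\mapsto\TAc$ and $\chi\mapsto\Jc$ are of class $\C^r$ into ${\mathcal{L}}(\V,\V'_{\sharp})$ and ${\mathcal{L}}(\H_0,\V'_{\sharp})$ respectively, as required by Definition~\ref{reg}. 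With $X=\mathbb{R}$ and $r\in\{1,\omega\}$, Theorem~\ref{Rellichthm} applies and yields the $m$ functions $\chi\mapsto\zeta_{\chi,k},\dots,\zeta_{\chi,k+m-1}$ of class $\C^r$.

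Next I would select a basis by pull-back. Let $\{E_{\bar\chi,h}\}_{h=1}^m$ be an $\L^2_\eps(\om_{\Phi_{\bar\chi}})$-orthonormal basis of the $\bar\lambda$-eigenspace of the Maxwell problem on $\om_{\Phi_{\bar\chi}}$. Since $\tau^1_{\Phi_{\bar\chi}}$ is an isometry from $\L^2_\eps(\om_{\Phi_{\bar\chi}})$ onto $\L^2_{\eps_{\Phi_{\bar\chi}}}(\om)$ that maps $\R_{\gatpchi}(\om_{\Phi_{\bar\chi}})$ onto $\R_{\gat}(\om)$ and intertwines the two Maxwell operators (see the pull-back identities recalled before Lemma~\ref{lemmaderchi}), the vectors $u_{\bar\chi,h}:=\tau^1_{\Phi_{\bar\chi}}E_{\bar\chi,h}\in\R_{\gat}(\om)$ form an $\H_{0,\bar\chi}$-orthonormal basis of the $\bar\lambda$-eigenspace of $T_{\bar\chi}$, and $\tau^1_{\Phi_{\bar\chi}^{-1}}u_{\bar\chi,h}=E_{\bar\chi,h}$ by construction.

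Finally I would plug this basis into Lemma~\ref{lemmaderchi}. The lemma is stated for diagonal entries $F=G$, but the sesquilinear forms $\scp{\Jc\,\cdot\,}{\cdot}$ and $\scp{\TAc\,\cdot\,}{\cdot}$ are polynomial in $F$ and $G$, so polarization extends \eqref{derIcal} and \eqref{derTau} to the off-diagonal pairs $(u_{\bar\chi,h},u_{\bar\chi,l})$. Substituting the resulting expressions into the abstract matrix \eqref{matrixrellich} of Theorem~\ref{Rellichthm} produces precisely \eqref{formula:shapederivatives}. The main subtlety I anticipate is the bookkeeping of pull-back transformations — confirming the membership $u_{\bar\chi,h}\in\R_{\gat}(\om)$, transporting $\L^2_\eps$-orthonormality on $\om_{\Phi_{\bar\chi}}$ to $\L^2_{\eps_{\Phi_{\bar\chi}}}$-orthonormality on $\om$, and polarizing Lemma~\ref{lemmaderchi} — all of which follow in a routine way from the identities collected in the excerpt and introduce no genuine analytic difficulty.
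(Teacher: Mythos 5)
Your proposal is correct and follows essentially the same route as the paper: pull back an orthonormal eigenbasis of the $\bar\lambda$-eigenspace via $\tau^1_{\Phi_{\bar\chi}}$, compute the matrix entries with Lemma~\ref{lemmaderchi}, and invoke Theorem~\ref{Rellichthm}. The only inaccuracy is your remark that Lemma~\ref{lemmaderchi} is stated only for diagonal entries $F=G$ — it is in fact stated for arbitrary pairs $F,G$, so the polarization step is unnecessary (though harmless).
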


\begin{proof}
 Recall from Part I of this series of papers that  $F \in \R_{\gat}(\Omega)$ is an eigenvector of the operator  $\A^*_{\bar\chi}\A_{\bar\chi}$ associated with an eigenvalue  $\bar\lambda$  if and only if  $E=\tau_{\Phi^{-1}_{\bar \chi}}^1 F$ belongs to $\R_{\Gamma_{\!t,\Phi_{\bar \chi}}}(\Omega_{\Phi{\bar \chi}})$ and is an eigenvector of the operator $\A^*\A$ associated with the same eigenvalue $\bar\lambda$. Moreover, 
 $F_{\bar\chi , h}$, $h=1,\dots,m$ is an orthonormal  basis  in $\L^2_{\eps_{\Phi_{\bar \chi}}}(\Omega)$ of the eigenspace  of $\A_{\bar\chi}^*\A_{\bar \chi}$ associated with $\bar \lambda$ if and only if  $E_{\bar\chi,h}=\tau_{\Phi^{-1}_{\bar \chi}   }^1 F_{\bar\chi, h}$, $h=1,\dots,m$ is an orthonormal basis   in $\L^2_{\eps}(\Omega_{\Phi_{\bar\chi}})$ of the eigenspace of   $\A^*\A$ associated with $\bar\lambda$. 
By Lemma \ref{lemmaderchi} we have that
\begin{eqnarray}\label{derivativeinteg}
\lefteqn{
\partial_{\dot\chi}  \scp{    \TAc \ F_{  \bar\chi, h}  }{   F_{   \bar\chi, l}   }-  \bar\lambda  \partial_{\dot\chi} \scp{   \mathcal{I}_\chi F_{   \bar\chi, h}  }{   F_{   \bar\chi, l} } 
}\nonumber \\ 
& & = \scp{ \left( \p_{\Psi}\mu^{-1}-(\mu^{-1}\div\Psi)+2\sym(\mu^{-1}J_{\Psi}) \right) \rot_{\ga_{\!t,\Phi_{\bar\chi}}} E_{\bar\chi,h}}{\rot_{\ga_{\!t,\Phi_{\bar\chi}}} E_{\bar\chi,l}}_{\L^{2}(\Omega_{\Phi_{\bar \chi}})} \\
& & \quad - \bar \lambda \scp{ \left(
\p_{\Psi}\eps+ (\div\Psi) \eps -2\sym(J_{\Psi}\eps) \right)  E_{\bar\chi,h}  }{   E_{\bar\chi,l}   }_{\L^{2}(\Omega_{\Phi_{\bar\chi}})} \nonumber.
\end{eqnarray}
The proof follows by applying Theorem~\ref{Rellichthm}. 
\end{proof}

In order to write the formulas in the two previous statements by means of surface integrals we need additional assumptions as follows. 

\begin{cor}[Hirakawa's formula] \label{coroll:surfaceinthe} Assume that $\bar \lambda$ and  $\{E_{\bar\chi,h}\}_{h=1,\dots,m}$ are as in  Theorem~\ref{symtheomax} and assume that  the Gaffney inequalities \eqref{gaffneytoomuchelec}, \eqref{gaffneytoomuchmagn} hold in $\Omega_{\Phi_{\bar \chi}}$. 
Then the following statements hold. 
\begin{itemize}
\item[(i)]  The directional derivatives   at the point $\bar \chi$ in the direction $\dot\chi$ of the functions $\Lambda_{F,s}$ from Theorem~\ref{symtheomax} can be written as 
\begin{equation}\label{hirakawasym}
\begin{split}
& \bar \lambda ^{s-1}\binom{m-1}{s-1}\sum_{h=1}^m
\Bigg(
\int_{\Gamma_{n,\Phi_{\bar\chi}}} \left( (\mu^{-1} \rot E_{\bar\chi,h} \cdot \rot \ol{E_{\bar\chi,h}} )  -  \lambda (\eps E_{\bar\chi,h} \cdot \ol{E_{\bar\chi,h}} ) \right) (n \cdot \Psi)  d\sigma \\
& \qquad \qquad - \int_{\Gamma_{t,\Phi_{\bar\chi}}} \left( (\mu^{-1} \rot E_{\bar\chi,h} \cdot \rot \ol{E_{\bar\chi,h}} )  -  \lambda (\eps E_{\bar\chi,h} \cdot \ol{E_{\bar\chi,h}} ) \right) (n \cdot \Psi)  d\sigma
\Bigg) .
\end{split}
\end{equation}
\item[(ii)]
The derivatives   at a point  $\bar \chi\in \mathbb{R}$  of the  functions $\zeta_{\chi, k}, \dots , \zeta_{\chi, k+m-1}$  from  Theorem~\ref{theo:shapederivatives} are given by the eigenvalues of the following matrix:
\begin{equation}\label{hirakawa}
\begin{split}
\Bigg(
&\int_{\Gamma_{n,\Phi_{\bar\chi}}} \left( (\mu^{-1} \rot E_{\bar\chi,h} \cdot \rot \ol{E_{\bar\chi,l}} )  -  \lambda (\eps E_{\bar\chi,h} \cdot \ol{E_{\bar\chi,l}} ) \right) (n \cdot \Psi)  d\sigma \\
& \qquad \qquad - \int_{\Gamma_{t,\Phi_{\bar\chi}}} \left( (\mu^{-1} \rot E_{\bar\chi,h} \cdot \rot \ol{E_{\bar\chi,l}} )  -  \lambda (\eps E_{\bar\chi,h} \cdot \ol{E_{\bar\chi,l}} ) \right) (n \cdot \Psi)  d\sigma
\Bigg)_{h,l=1,\dots,m},
\end{split}
\end{equation}
.
\end{itemize}
\end{cor}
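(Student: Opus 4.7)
The strategy is to start from the volume-integral formula \eqref{formula:shapederivatives} of Theorem~\ref{theo:shapederivatives} (and its symmetric-function counterpart in Theorem~\ref{symtheomax}) and to convert it into a boundary-integral expression by means of Reynolds' transport theorem, integration by parts against the Maxwell equation, and the mixed boundary conditions. The preliminary step is to upgrade regularity: applying the Gaffney inequality \eqref{gaffneytoomuchelec} to the eigenvector $E_{\bar\chi,h}$, which lies in $\R_{\Gamma_{t,\Phi_{\bar\chi}}}(\Omega_{\Phi_{\bar\chi}})$ and satisfies $\div(\eps E_{\bar\chi,h})=0$ together with $n\cdot\eps E_{\bar\chi,h}=0$ on $\Gamma_{n,\Phi_{\bar\chi}}$ (both consequences of the Maxwell equation $\rot(\mu^{-1}\rot E_{\bar\chi,h})=\bar\lambda\,\eps E_{\bar\chi,h}$ together with the natural boundary condition $n\times\mu^{-1}\rot E_{\bar\chi,h}=0$ on $\Gamma_{n,\Phi_{\bar\chi}}$), yields $E_{\bar\chi,h}\in\H^{1}(\Omega_{\Phi_{\bar\chi}})$. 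Applying \eqref{gaffneytoomuchmagn} to $\rot E_{\bar\chi,h}$ (which satisfies $n\times\mu^{-1}\rot E_{\bar\chi,h}=0$ on $\Gamma_{n,\Phi_{\bar\chi}}$ and $n\cdot\rot E_{\bar\chi,h}=0$ on $\Gamma_{t,\Phi_{\bar\chi}}$) yields in turn $\rot E_{\bar\chi,h}\in\H^{1}(\Omega_{\Phi_{\bar\chi}})$, whence also $\mu^{-1}\rot E_{\bar\chi,h}\in\H^{1}(\Omega_{\Phi_{\bar\chi}})$. This $\H^{1}$ regularity is what makes the $\L^{2}$-traces appearing in \eqref{hirakawa} and \eqref{hirakawasym} well defined and legitimizes all the integration-by-parts steps below.

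Next I would interpret the volume integrand. The matrix expressions $\p_\Psi\eps+\eps\div\Psi-2\sym(J_\Psi\eps)$ and $\p_\Psi\mu^{-1}-\mu^{-1}\div\Psi+2\sym(\mu^{-1}J_\Psi)$ appearing in \eqref{formula:shapederivatives} are, by a direct computation (cf.\ the appendix of Part~I and the proof of Lemma~\ref{lemmaderchi}), precisely the $\chi$-derivatives at $\bar\chi$ of the Piola pullbacks $\eps_{\Phi_\chi\circ\Phi_{\bar\chi}^{-1}}$ and $\mu^{-1}_{\Phi_\chi\circ\Phi_{\bar\chi}^{-1}}$. Using the isometry of $\tau^{1}_{\Phi_\chi}$, each entry of \eqref{formula:shapederivatives} can then be rewritten as the $\chi$-derivative at $\chi=\bar\chi$ of $\int_{\Omega_{\Phi_\chi}}[\mu^{-1}\rot E_\chi\cdot\rot\bar E_\chi-\bar\lambda\,\eps E_\chi\cdot\bar E_\chi]\,dx$, where $E_\chi$ is the Eulerian Piola-type extension $(\tau^{1}_{\Phi_\chi})^{-1}(\tau^{1}_{\Phi_{\bar\chi}}E_{\bar\chi,h})$ (with the entry-$(h,l)$ version following by bilinearity). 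Reynolds' transport theorem then produces the boundary contribution $\int_{\partial\Omega_{\Phi_{\bar\chi}}}[\mu^{-1}\rot E\cdot\rot\bar E-\bar\lambda\,\eps E\cdot\bar E](n\cdot\Psi)\,d\sigma$, while the remaining interior contribution carrying the material derivative of $E$ is handled by integrating $\mu^{-1}\rot E\cdot\rot\dot{\bar E}$ by parts against the Maxwell equation $\rot(\mu^{-1}\rot E)=\bar\lambda\,\eps E$; the interior terms cancel in the Hellmann--Feynman fashion discussed in Remark~\ref{rulethumb}, leaving a surface correction of the form $-2\,\Re\int_{\partial\Omega_{\Phi_{\bar\chi}}}(n\times\mu^{-1}\rot E)\cdot\dot{\bar E}\,d\sigma$.

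Finally, the mixed boundary conditions distribute this correction asymmetrically between $\Gamma_{t,\Phi_{\bar\chi}}$ and $\Gamma_{n,\Phi_{\bar\chi}}$. On $\Gamma_{n,\Phi_{\bar\chi}}$ the factor $n\times\mu^{-1}\rot E$ vanishes, so the correction is zero there and only the pure Reynolds contribution survives, producing the $+\int_{\Gamma_n}$ term in \eqref{hirakawa}. On $\Gamma_{t,\Phi_{\bar\chi}}$ the identity $n\times E=0$ holds on \emph{every} moving boundary $\Gamma_{t,\Phi_\chi}$; differentiating this identity in $\chi$ ties the tangential part of $\dot{\bar E}$ back to $E$ and $\Psi$, and after substituting and applying tangential integration by parts on $\Gamma_{t,\Phi_{\bar\chi}}$ the correction becomes exactly $-2\int_{\Gamma_{t,\Phi_{\bar\chi}}}[\mu^{-1}\rot E\cdot\rot\bar E-\bar\lambda\,\eps E\cdot\bar E](n\cdot\Psi)\,d\sigma$. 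Combining, the identity $\int_{\partial\Omega_{\Phi_{\bar\chi}}}[\,\cdot\,]-2\int_{\Gamma_t}[\,\cdot\,]=\int_{\Gamma_n}[\,\cdot\,]-\int_{\Gamma_t}[\,\cdot\,]$ produces the asymmetric structure of assertion~(ii); assertion~(i) then follows by taking the trace of the matrix identity and invoking Theorem~\ref{symtheo}. The main obstacle will be precisely this last step: correctly identifying the $\Gamma_{t,\Phi_{\bar\chi}}$ correction as the full $-2\int_{\Gamma_t}[\,\cdot\,]$ surface term. This requires careful bookkeeping of the motion of the unit normal $n$, a tangential Stokes-type integration-by-parts on $\Gamma_t$, and the full strength of the $\H^{1}$ regularity granted by the Gaffney inequality; it is also where the assumption of Gaffney's inequality, as opposed to mere Lipschitz regularity, is essential.
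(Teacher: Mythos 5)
Your preliminary regularity step is exactly the one the paper uses: the eigenvectors lie in $D(\A)\cap R(\A^{*})$, hence satisfy the complementary natural conditions, and the two Gaffney inequalities \eqref{gaffneytoomuchelec}, \eqref{gaffneytoomuchmagn} give $E_{\bar\chi,h}\in\H^{1}(\Omega_{\Phi_{\bar\chi}})$ and $\rot E_{\bar\chi,h}\in\H^{1}(\Omega_{\Phi_{\bar\chi}})$, so all traces make sense. From that point on, however, you diverge from the paper and, more importantly, your route has a genuine gap. The paper does \emph{not} revisit the shape differentiation at all: the corollary is reduced to the purely static Lemma~\ref{lem:maxsurfaceint}, which takes the already-established volume expression \eqref{formula:shapederivatives} on the \emph{fixed} domain $\Omega_{\Phi_{\bar\chi}}$ and converts it into surface integrals by volume integration by parts alone --- writing $Q=\int_{\Omega_{\Phi}}(\p_{\Psi}\eps-2\sym(J_{\Psi}\eps))F\cdot\ol{G}$, using the identity $\rot(\Psi\times\rot F)=-\rot F\,\div\Psi+J_{\Psi}\rot F-J_{\rot F}\Psi$, Lagrange's triple-product formula, the eigenvalue equation $\rot\mu^{-1}\rot F=\lambda\eps F$, and the pointwise trace relations $\eps F\cdot n=0$, $n\times\mu^{-1}\rot F=0$ on $\Gamma_{n,\Phi}$ and $F=(F\cdot n)n$, $n\cdot\rot F=0$ on $\Gamma_{t,\Phi}$. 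Only the divergence theorem for $\H^{1}$ fields on a Lipschitz domain is needed, which is precisely what Gaffney supplies.

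Your proposal instead reinterprets each matrix entry as the derivative of a moving-domain integral of a transported field, applies Reynolds' transport theorem, and then must dispose of the material-derivative terms. The decisive step --- showing that the correction supported on $\Gamma_{t,\Phi_{\bar\chi}}$ equals exactly $-2\int_{\Gamma_{t,\Phi_{\bar\chi}}}\bigl(\mu^{-1}\rot E\cdot\rot\ol{E}-\bar\lambda\,\eps E\cdot\ol{E}\bigr)(n\cdot\Psi)\,d\sigma$ --- is the entire content of the result, and you leave it as an acknowledged ``obstacle'' rather than proving it. Moreover the tools you invoke for it (differentiating the condition $n\times E=0$ along the moving boundary, tracking the motion of the normal, and a tangential Stokes-type integration by parts on $\Gamma_{t,\Phi_{\bar\chi}}$) are not available under the hypotheses of the corollary: the boundary is only Lipschitz, so $n$ is merely an $\L^{\infty}$ field defined a.e., there is no smoothly varying normal to differentiate, and a surface integration-by-parts formula on an open subset of a Lipschitz boundary is not at your disposal. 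There is also a conceptual mismatch: the ``Hellmann--Feynman cancellation'' you invoke applies to derivatives of genuine eigenfunction families, whereas your $E_{\chi}$ is a fixed Lagrangian field pushed forward by $\Phi_{\chi}\circ\Phi_{\bar\chi}^{-1}$, so the interior cancellation must be argued separately (it does hold after integrating against the eigenvalue equation, but then it reproduces boundary terms you would again need to control on $\Gamma_{t}$). In short, your plan re-derives what Theorem~\ref{theo:shapederivatives} already provides and then defers the only nontrivial computation to machinery that the Lipschitz setting does not support; the paper's static volume integration by parts in Lemma~\ref{lem:maxsurfaceint} is the step you are missing.
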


The proof of Corollary  \ref{coroll:surfaceinthe} is an immediate application of Theorems \ref{symtheomax},\ref{theo:shapederivatives} and the following lemma.

\begin{lem} \label{lem:maxsurfaceint}
Suppose that  the  Gaffney inequalities \eqref{gaffneytoomuchelec}, \eqref{gaffneytoomuchmagn} hold   in $\Omega_{\Phi_{\bar\chi}}$. 
Suppose that $F,G \in \L^2_{\eps}(\Omega_{\Phi_{\bar\chi}})$ are Maxwell eigenfunctions associated with an eigenvalue  $\bar \lambda \ne 0$. 
Then 
\begin{equation*}
\begin{split}
&\scp{ \left( \p_{\Psi}\mu^{-1}-(\mu^{-1}\div\Psi)+2\sym(\mu^{-1}J_{\Psi}) \right) \rot_{\ga_{\!t,\Phi_{\bar\chi}}} F }{\rot_{\ga_{\!t,\Phi_{\bar\chi}}} G }_{\L^{2}(\Omega_{\Phi_{\bar \chi}})} \\
& \qquad \qquad - \bar \lambda \scp{ \left(
\p_{\Psi}\eps+ (\div\Psi) \eps -2\sym(J_{\Psi}\eps) \right)  F  }{  G   }_{\L^{2}(\Omega_{\Phi_{\bar\chi}})} \\
&= \int_{\Gamma_{n,\Phi_{\bar\chi}}} \left( (\mu^{-1} \rot F \cdot \rot \ol G)  -  \lambda (\eps F \cdot \ol G) \right) (n \cdot \Psi)  d\sigma \\
& \qquad \qquad - \int_{\Gamma_{t,\Phi_{\bar\chi}}} \left( (\mu^{-1} \rot F \cdot \rot \ol G)  -  \lambda (\eps F \cdot \ol G) \right) (n \cdot \Psi)  d\sigma,
\end{split}
\end{equation*}
\end{lem}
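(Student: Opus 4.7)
The strategy is a Rellich-type integration-by-parts identity on $\Omega_{\Phi_{\bar\chi}}$, made rigorous by Gaffney. The plan is to move from the volume integrals on the left-hand side to the surface integral on the right by combining the divergence theorem applied to a well-chosen scalar field with an integration by parts based on the Piola commutator, using the Maxwell eigenequations and the mixed boundary conditions. Because $F,G$ are Maxwell eigenfunctions for $\bar\lambda\neq 0$, the associated boundary conditions together with $\rot(\mu^{-1}\rot F) = \bar\lambda\,\eps F$ place them in $\R_{\Gamma_{t,\Phi_{\bar\chi}}}(\Omega_{\Phi_{\bar\chi}})\cap\eps^{-1}\D_{\Gamma_{n,\Phi_{\bar\chi}}}(\Omega_{\Phi_{\bar\chi}})$, so \eqref{gaffneytoomuchelec} upgrades them to $H^{1}(\Omega_{\Phi_{\bar\chi}})^{3}$. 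This is the only ingredient that justifies all the upcoming pointwise directional derivatives $\p_\Psi F,\p_\Psi \rot F,\ldots$ and their boundary traces.

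Define the scalar $\omega := \mu^{-1}\rot F\cdot\rot\ol G - \bar\lambda\,\eps F\cdot\ol G$ and apply the divergence theorem to $\omega\Psi$, obtaining
\begin{equation*}
\int_{\Omega_{\Phi_{\bar\chi}}}\bigl(\omega\,\div\Psi + \p_\Psi\omega\bigr)\,dx \;=\; \int_{\Gamma_{t,\Phi_{\bar\chi}}\cup\Gamma_{n,\Phi_{\bar\chi}}}\!\omega\,(n\cdot\Psi)\,d\sigma.
\end{equation*}
Expanding $\p_\Psi\omega$ by the product rule isolates the $(\p_\Psi\mu^{-1})\rot F\cdot\rot\ol G$ and $\bar\lambda(\p_\Psi\eps)F\cdot\ol G$ contributions already present on the left-hand side of the lemma, plus four remainder terms involving $\mu^{-1}\p_\Psi\rot F\cdot\rot\ol G$, $\bar\lambda\,\eps\p_\Psi F\cdot\ol G$ and their $G$-companions. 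For those, apply the Piola commutator
\begin{equation*}
\p_\Psi\rot F \;=\; \rot(J_\Psi^\top F + \p_\Psi F) + J_\Psi\rot F - (\div\Psi)\rot F,
\end{equation*}
obtained by differentiating the intertwining $\rot\tau^{1}_{\Phi} = \tau^{2}_{\Phi}\rot$ at $\Phi=\id$ in the direction $\Psi$, and integrate $\rot(J_\Psi^\top F + \p_\Psi F)$ by parts against $\mu^{-1}\rot\ol G$ using $\rot(\mu^{-1}\rot\ol G) = \bar\lambda\eps\ol G$. The symmetric-matrix identities $J_\Psi\eps + \eps J_\Psi^\top = 2\sym(J_\Psi\eps)$ and $\mu^{-1}J_\Psi + J_\Psi^\top\mu^{-1} = 2\sym(\mu^{-1}J_\Psi)$, which rely on the symmetry of $\eps$ and $\mu^{-1}$, then reproduce precisely the remaining $\sym$- and $\div\Psi$-contributions of the left-hand side of the lemma, so that all bulk integrals cancel against those of the left-hand side, leaving only the divergence-theorem boundary integral over $\Gamma_{t,\Phi_{\bar\chi}}\cup\Gamma_{n,\Phi_{\bar\chi}}$ plus a surface contribution concentrated on $\Gamma_{t,\Phi_{\bar\chi}}$ alone, since $n\times\mu^{-1}\rot\ol G = 0 = n\times\mu^{-1}\rot F$ on $\Gamma_{n,\Phi_{\bar\chi}}$.

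The proof is thus reduced to identifying this leftover boundary term,
\begin{equation*}
\mathcal R := \int_{\Gamma_{t,\Phi_{\bar\chi}}}\!\bigl((n\times(J_\Psi^\top F + \p_\Psi F))\cdot\mu^{-1}\rot\ol G + (n\times(J_\Psi^\top \ol G + \p_\Psi \ol G))\cdot\mu^{-1}\rot F\bigr)\,d\sigma,
\end{equation*}
with $2\int_{\Gamma_{t,\Phi_{\bar\chi}}}\omega(n\cdot\Psi)\,d\sigma$; adding $-\mathcal R$ to the divergence-theorem integral then produces exactly the asymmetric $\int_{\Gamma_{n,\Phi_{\bar\chi}}}-\int_{\Gamma_{t,\Phi_{\bar\chi}}}$ of the statement. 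For this last identification I would decompose $\Psi = \Psi_{T} + (n\cdot\Psi)n$ along $\Gamma_{t,\Phi_{\bar\chi}}$; use the $H^{1}$-form of $n\times F = n\times G = 0$ (which forces $F = (F\cdot n)n$ and $G = (G\cdot n)n$ pointwise on $\Gamma_{t,\Phi_{\bar\chi}}$); differentiate these identities tangentially to control $n\times\p_\tau F$ and $n\times\p_\tau G$ (the Weingarten contributions combining with the symmetry of $\eps$ and $\mu^{-1}$); and collapse the result to $\omega(n\cdot\Psi)$ by means of the surface decomposition of $\rot F$ and $\rot G$. This boundary manipulation, where the sign asymmetry between the two pieces of the boundary is actually produced, is the main technical obstacle of the proof; the preceding steps are essentially algebraic rearrangements once the Piola commutator is at hand.
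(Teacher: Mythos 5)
Your strategy is sound up to the point you yourself flag as the main obstacle, but that obstacle is precisely where the lemma lives, and your proposal does not close it. Concretely: the divergence theorem applied to $\omega\Psi$, the product-rule expansion of $\p_\Psi\omega$, the commutator for $\p_\Psi\rot F$, and the integration by parts against $\rot(\mu^{-1}\rot\ol G)=\bar\lambda\,\eps\ol G$ do make all bulk terms cancel correctly (the $\div\Psi$ and $\sym(\cdot)$ bookkeeping works out), and you are left with needing $\mathcal R=2\int_{\Gamma_{t,\Phi_{\bar\chi}}}\omega\,(n\cdot\Psi)\,d\sigma$. But this identity cannot hold pointwise: the integrand of $\mathcal R$ contains first derivatives of $\Psi$ on the boundary (through the $J_\Psi^\top F$ and $J_\Psi^\top\ol G$ inside $n\times(\cdot)$), while $\omega\,(n\cdot\Psi)$ contains none, so you would need a further \emph{surface} integration by parts along $\Gamma_{t,\Phi_{\bar\chi}}$, with potential contributions from the interface $\ol{\ga}_{t}\cap\ol{\ga}_{n}$. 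Moreover, the tools you propose for this step --- tangential differentiation of the trace identity $n\times F=0$ and ``Weingarten contributions'' --- presuppose a differentiable normal field, i.e.\ at least $\C^{1,1}$ boundary regularity, which is not assumed: the lemma is stated for a weak Lipschitz pair plus the Gaffney inequalities only. There is the further technical problem that $\tau F:=J_\Psi^\top F+\p_\Psi F$ is only in $\L^2$ with $\rot\tau F\in\L^2$, so $n\times\tau F$ is merely a distributional tangential trace; splitting that duality pairing into separate honest integrals over $\Gamma_t$ and $\Gamma_n$ and manipulating it pointwise is not justified. The proof is therefore genuinely incomplete, and incomplete exactly at the step that produces the sign asymmetry between the two boundary pieces.

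The paper's proof avoids this trap by arranging the volume integrations by parts so that $\Psi$ is never differentiated in a boundary term. It starts from $Q=\int(\p_\Psi\eps-2\sym(J_\Psi\eps))F\cdot\ol G$, moves the $\nabla\Psi$ factors off $\Psi$ inside the volume, uses the antisymmetrization identity $((\p_kF_i)-(\p_iF_k))\eps_{kj}\ol G_j\Psi_i=\rot F\cdot(\eps\ol G\times\Psi)$ together with the expansion of $\nabla(\eps F\cdot\ol G)\cdot\Psi$, then substitutes $\eps\ol G=\lambda^{-1}\rot\mu^{-1}\rot\ol G$ and uses $\rot(\Psi\times\rot F)=-\rot F\div\Psi+J_\Psi\rot F-J_{\rot F}\Psi$. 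Every boundary term that appears --- $(F\cdot\Psi)(\eps\ol G\cdot n)$, $(n\times\mu^{-1}\rot\ol G)\cdot(\Psi\times\rot F)$, and the final divergence-theorem terms --- carries $\Psi$ undifferentiated and is evaluated by pointwise vector algebra (Lagrange's triple product) from the boundary conditions $F=(F\cdot n)n$, $\eps F\cdot n=0$, $n\times\mu^{-1}\rot F=0$, $n\cdot\rot F=0$; the two Gaffney inequalities are used only to put $F,G$ and $\rot F,\rot G$ in $\H^1$ so that all traces are honest $\L^2$ functions. If you want to salvage your route, you would have to reorganize it so that the $J_\Psi^\top F$ part of $\tau F$ is integrated by parts back into the volume before any boundary trace is taken.
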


\begin{proof}
To ease the notation, throughout this proof we write  $\Phi, \lambda$ instead of $\Phi_{\bar\chi}, \bar \lambda$, and we use the Einstein notation omitting summation symbols.  

Observe also that the quantities $\Psi, \eps,\mu$ have  real-valued entries, thus they coincide with their complex conjugates. The complex conjugate of $G$ is denoted by $\overline{G}$.

Recall that since $F,G$ are eigenvectors  of $A^* A$ associated with a non-zero eigenvalue, they belong to $D(A) \cap R(A^*) \subset D(\rot_{\Gamma_{t,\Phi}}) \cap N(\div_{\ganp}\eps)$ by the Hilbert complex property (see Remarks 2.21 and 3.10 of Part I). Therefore $\div(\eps F) = \div(\eps G)=0$ in $\Omega_\Phi$ and $\eps F \cdot n = \eps \, G \cdot n=0$ on $\Gamma_{n,\Phi}$.

Moreover, since by Lemma 2.15 of Part I, the vector fields $\mu^{-1}\rot F,\mu^{-1}\rot G$ are eigenvectors of $A A^*$ (associated with the same non-zero eigenvalue), they belong to $D(A^*) \cap R(A) \subset  D(\rot_{\Gamma_{n,\Phi}}) \cap N(\div_{\Gamma_{t,\Phi}}\mu)$ (see the operator $A_2$ in the Remark 2.21 of Part I), hence in particular $n \cdot \rot F= n \cdot \rot G=0$  on $\Gamma_{t,\Phi}$. As a consequence, by the Gaffney inequality $\rot F, \rot \ol G \in   \mu\,\R_{\ganp}(\Omega_\Phi) \cap \D_{\gatp} (\Omega_\Phi)\subset \H^1(\Omega_\Phi)$.

We set
\begin{equation*}
Q:=\int_{\Omega_\Phi} \left( \partial_\Psi \eps - 2 \sym(J_{\Psi}\eps) \right) F \cdot \overline{G}\, dy.
\end{equation*}
Observe that since by hypothesis the Gaffney inequality holds in $\Omega_\Phi$, then $F,G \in \H^1(\Omega_\Phi)$. 
Integrating by parts  yield
\begin{equation*}
\begin{split}
&-\int_{\Omega_\Phi} \left( J_\Psi \, \eps +(J_\Psi \, \eps)^\top \right) F \cdot \ol G=-\int_{\Omega_\Phi} F_i (\partial_k \Psi_i) \eps_{kj} \ol G_j  - \int_{\Omega_\Phi} F_i (\partial_k \Psi_j) \eps_{ki} \ol G_j   \\
&\quad = \int_{\Omega_\Phi} (\partial_k F_i) \Psi_i \eps_{kj} \ol G_j + \int_{\Omega_\Phi} F_i \Psi_j \eps_{ki} (\partial_k \ol G_j)  + \int_{\Omega_\Phi} F \cdot \Psi \divergence(\eps \ol G) + \int_{\Omega_\Phi} \ol G \cdot \Psi  \divergence(\eps F) \\
&\qquad - \int_{\Gamma_\Phi} (F \cdot \Psi) (\eps \ol G \cdot n) d\sigma  - \int_{\Gamma_\Phi} (\ol G \cdot \Psi) (\eps F \cdot n) d\sigma \\
&\quad = \int_{\Omega_\Phi} (\partial_k F_i) \Psi_i \eps_{kj} \ol G_j + \int_{\Omega_\Phi} F_i \Psi_j \eps_{ki} (\partial_k \ol G_j) - \int_{\Gamma_\Phi} (F \cdot \Psi) (\eps \ol G \cdot n) d\sigma  - \int_{\Gamma_\Phi} (\ol G \cdot \Psi) (\eps F \cdot n) d\sigma .
\end{split}
\end{equation*} 
Furthermore we have that
\begin{equation*}
\sum_{i,j,k=1}^3 \left((\partial_k F_i)  - (\partial_i F_k) \right) \eps_{kj} \ol G_j  \Psi_i= \rot F \cdot (\eps \ol G \times \Psi)
\end{equation*}
Moreover observe that
\begin{equation*}
\begin{split}
\nabla (\eps F \cdot \ol G) \cdot \Psi &= (\partial_i F_k) \eps_{kj} \ol G_j \Psi_i + (\partial_i \ol G_j)\eps_{kj} F_k \Psi_i + (\partial_i \eps_{kj})F_k \ol G_j \Psi_i \\
&= (\partial_i F_k) \eps_{kj} \ol G_j \Psi_i+ (\partial_i \ol G_j)\eps_{kj} F_k \Psi_i+ F_k \ol G_j(\nabla \eps_{kj} \cdot \Psi)_{1\leq k,j \leq 3} \\
&= (\partial_i F_k) \eps_{kj} \ol G_j \Psi_i + (\partial_i \ol G_j)\eps_{kj} F_k  \Psi_i+ F \cdot (\partial_\Psi \eps)\ol G. \\
& = (\partial_i F_k) \eps_{kj} \ol G_j \Psi_i + (\partial_i \ol G_j)\eps_{kj} F_k  \Psi_i+ (\partial_\Psi \eps) F \cdot \ol G.
\end{split}
\end{equation*} 
Therefore
\begin{equation} \label{eq:Q1}
\begin{split}
Q&= \int_{\Omega_\Phi} \rot F \cdot (\eps \ol G \times \Psi) + \rot \ol G \cdot(\eps F \times \Psi)  + \nabla (\eps F \cdot \ol G) \cdot \Psi \\
 & \qquad - \int_{\Gamma_\Phi} \left( (F \cdot \Psi) (\eps \ol G \cdot n) + (\ol G \cdot \Psi) (\eps F \cdot n)\right) d\sigma \\
& = \int_{\Omega_\Phi} \eps \ol G \cdot (\Psi  \times \rot F) + \eps F \cdot (\Psi  \times \rot \ol G)  + \nabla (\eps F \cdot \ol G) \cdot \Psi - 2\int_{\Gamma_{t,\Phi}}   (\eps F \cdot \ol G) (n \cdot \Psi)d\sigma,
\end{split}
\end{equation}
where in the last equality  we have used the fact that $\eps \, F \cdot n=\eps \, \ol G \cdot n=0$ on $\Gamma_{n,\Phi}$, while $F=(F \cdot n)n$ and $\ol G=(\ol G \cdot n)n$ on $\Gamma_{t,\Phi}$.
By recalling that  $F$ and $G$ are eigenvectors of $\A^*\A = \eps^{-1}\rot \mu^{-1} \rot_{\Gamma_\Phi}$ and integrating by parts we get 
\begin{equation} \label{eq:Q2}
\begin{split}
&\int_{\Omega_\Phi} \eps \ol G \cdot (\Psi  \times \rot F) = \lambda^{-1}\int_{\Omega_\Phi}  \rot \mu^{-1} \rot \ol G \cdot (\Psi \times \rot F) \\
&=\lambda^{-1} \int_{\Omega_\Phi}  \mu^{-1} \rot \ol G \cdot \rot(\Psi \times \rot F) + \lambda^{-1} \int_{\Gamma_\Phi} (n \times \mu^{-1}\rot \ol G) \cdot (\Psi \times \rot F) d\sigma\\
&=-\lambda^{-1} \int_{\Omega_\Phi}  \mu^{-1} \rot \ol G \cdot \rot F \divergence\Psi + \lambda^{-1} \int_{\Omega_\Phi}  \mu^{-1} \rot \ol G \cdot J_\Psi \rot F \\
&\quad - \lambda^{-1} \int_{\Omega_\Phi} \mu^{-1} \rot \ol G \cdot J_{\rot F} \Psi  + \lambda^{-1} \int_{\Gamma_{t,\Phi}}  (\mu^{-1} \rot F \cdot \rot \ol G) (n \cdot \Psi) d\sigma  .
\end{split}
\end{equation} 
Here we  have used the fact that $\rot(\Psi \times \rot F) = - \rot F \, \divergence\Psi + J_\Psi \rot F - J_{\rot F} \Psi$ and that
\begin{equation*}
n \times \mu^{-1} \rot F = n \times \mu^{-1} \rot \ol G =0 
\end{equation*}
on $\Gamma_{n,\Phi}$, while on $\Gamma_{t,\Phi}$ we have that
\begin{equation*}
\begin{split}
(n \times \mu^{-1}\rot \ol G) \cdot (\Psi \times \rot F)&=  \Psi \cdot \left( \rot F \times ( n \times \mu^{-1} \rot \ol G) \right)\\
&= \Psi \cdot \left( (\rot F \cdot \mu^{-1} \rot \ol G) n - (\rot F \cdot n) \mu^{-1} \rot
\ol G \right) \\
&= (\rot F \cdot \mu^{-1} \rot \ol G) (n \cdot \Psi)\\
&= (\mu^{-1} \rot F \cdot \rot \ol G) (n \cdot \Psi)
\end{split}
\end{equation*}
by Lagrange's formula for the vector triple product and the fact that $n \cdot \rot F=0$  on $\Gamma_{t,\Phi}$.
%(cf. Remarks~\ref{rediv} and \ref{rediv10}). 
An analogous equality holds for 
\begin{equation} \label{eq:Q3}
\int_{\Omega_\Phi} \eps F \cdot (\Psi  \times \rot \ol G).
\end{equation} 
Observe that the Jacobians matrices $J_{\rot F}, J_{\rot \ol G}$ are well defined since  $\rot F, \rot \ol G \in  \H^1(\Omega_\Phi)$.   
By standard calculus and an approximation argument we also have that
\begin{equation} \label{eq:nablarotid}
\begin{split}
\nabla(\mu^{-1} \rot F \cdot \rot \ol G) \cdot \Psi = J_{\rot \ol G} \Psi \cdot \mu^{-1} \rot F +J_{\rot F} \Psi \cdot \mu^{-1} \rot \ol G +  (\partial_\Psi \mu^{-1})\rot F \cdot \rot \ol G.
\end{split}
\end{equation}
Hence by \eqref{eq:Q1}--\eqref{eq:nablarotid}
\begin{equation} \label{eqlambdaA}
\begin{split}
\lambda  Q &= - 2\int_{\Omega_\Phi} \mu^{-1}  \rot F  \cdot \rot \ol G \divergence \Psi + \int_{\Omega_\Phi} \left( \mu^{-1} J_\Psi + (\mu^{-1} J_\Psi)^\top \right) \rot F \cdot \rot \ol G  \\
&\quad -\int_{\Omega_\Phi}  \nabla (\mu^{-1} \rot F \cdot \rot \ol G) \cdot \Psi + \int_{\Omega_\Phi} (\partial_\Psi \mu^{-1})(\rot F) \cdot \rot \ol G + \lambda \int_{\Omega_\Phi} \nabla(\eps F \cdot \ol G) \cdot \Psi \\
&\quad +2 \int_{\Gamma_{t,\Phi}}  (\mu^{-1} \rot F \cdot \rot \ol G) (n \cdot \Psi)  d\sigma - 2 \lambda \int_{\Gamma_{t,\Phi}}   (\eps F \cdot \ol G) (n \cdot \Psi)  d\sigma.
\end{split}
\end{equation}
Applying the divergence theorem we obtain
\begin{equation} \label{eq:finalQ}
\begin{split}
\lambda Q &=- \int_{\Omega_\Phi} \mu^{-1} \rot F \cdot \rot \ol G \divergence \Psi + \int_{\Omega_\Phi} \left( \mu^{-1} J_\Psi + (\mu^{-1} J_\Psi)^\top \right) \rot F \cdot \rot \ol G \\
&\quad + \int_{\Omega_\Phi} (\partial_\Psi \mu^{-1}) (\rot F) \cdot \rot \ol G  -\lambda \int_{\Omega_\Phi} (\eps F \cdot \ol G) \divergence\Psi \\
&\quad - \int_{\Gamma_\Phi} (\mu^{-1} \rot F \cdot \rot \ol G)  (n \cdot \Psi)  d\sigma +\lambda \int_{\Gamma_\Phi}  (\eps F \cdot \ol G) (n \cdot \Psi)  d\sigma \\
&\quad +2 \int_{\Gamma_{t,\Phi}} (\mu^{-1} \rot F \cdot \rot \ol G) (n \cdot \Psi)  d\sigma - 2 \lambda \int_{\Gamma_{t,\Phi}}  (\eps F \cdot \ol G) (n \cdot \Psi)  d\sigma\\
&=- \int_{\Omega_\Phi} \mu^{-1} \rot F \cdot \rot \ol G \divergence \Psi + \int_{\Omega_\Phi} \left( \mu^{-1} J_\Psi + (\mu^{-1} J_\Psi)^\top \right) \rot F \cdot \rot \ol G \\
&\quad + \int_{\Omega_\Phi} (\partial_\Psi \mu^{-1}) (\rot F) \cdot \rot \ol G  -\lambda \int_{\Omega_\Phi} (\eps F \cdot \ol G) \divergence\Psi \\
&\quad - \int_{\Gamma_{n,\Phi}}  (\mu^{-1} \rot F \cdot \rot \ol G)  (n \cdot \Psi)  d\sigma +\lambda \int_{\Gamma_{n,\Phi}}   (\eps F \cdot \ol G) (n \cdot \Psi)  d\sigma \\
&\quad +\int_{\Gamma_{t,\Phi}}  (\mu^{-1} \rot F \cdot \rot \ol G)  (n \cdot \Psi) d\sigma -  \lambda \int_{\Gamma_{t,\Phi}}  (\eps F \cdot \ol G) (n \cdot \Psi)  d\sigma.
\end{split}
\end{equation}
Finally, from \eqref{eq:finalQ} we deduce that
\begin{equation*}
\begin{split}
&\scp{ \left( \p_{\Psi}\mu^{-1}-(\mu^{-1}\div\Psi)+2\sym(\mu^{-1}J_{\Psi}) \right) \rot F}{\rot  G}_{\L^{2}(\Omega_{\Phi})} \\
& \qquad \qquad - \lambda \scp{ \left(
\p_{\Psi}\eps+ (\div\Psi) \eps -2\sym(J_{\Psi}\eps) \right)  F  }{   G  }_{\L^{2}(\Omega_{\Phi})} \\
&=  \int_{\Omega_\Phi}  \left( \partial_\Psi \mu^{-1}  - \mu^{-1} \div \Psi +\mu^{-1} J_\Psi + (\mu^{-1} J_\Psi)^\top\right) \rot F \cdot \rot \ol G -\lambda Q - \lambda \int_{\Omega_\Phi}  \divergence \Psi \, \eps F \cdot \ol G  \\
&= \int_{\Gamma_{n,\Phi}} \left( (\mu^{-1} \rot F \cdot \rot \ol G)  -  \lambda (\eps F \cdot \ol G) \right) (n \cdot \Psi)  d\sigma \\
& \qquad \qquad - \int_{\Gamma_{t,\Phi}} \left( (\mu^{-1} \rot F \cdot \rot \ol G)  -  \lambda (\eps F \cdot \ol G) \right) (n \cdot \Psi)  d\sigma,
\end{split}
\end{equation*}
and the proof is complete.
\end{proof}

\subsection{Helmholtz Eigenvalues} \label{helmshapeder}

In this subsection we focus on the Helmholtz eigenproblem \eqref{helmprobl}.
We follow the notation and the results of  Section~\ref{sec:shape2sound} by specialising the function spaces and the operators as follows:
\begin{itemize}
\item  we fix the ambient Hilbert spaces  by setting $\H_0 =\L^2(\Omega)$, $\H_1= \L^2(\Omega)$ (with $\H_1$ being a space of vector valued functions, i.e.  $\H_1= \H_0^3$) and we consider also the 
Hilbert spaces  $\H_{0,\chi} =\L^{2}_{\nu_{\Phi_\chi}}(\om)$ and $\H_{1,\chi} = \L^{2}_{\eps_{{\Phi}_{\chi}}}(\om)$  depending on $\chi$.   Note that changing the value of $\chi$ gives equivalent scalar products;
\item  we consider the operator 
$$\A_{\chi}:=\na_{\gat}:\H^{1}_{\gat}(\om)\subset\L^{2}_{\nu_{\Phi_{\chi}}}(\om)\to\L^{2}_{\eps_{\Phi_{\chi}}}(\om)$$
as the pull-back of the operator
$$\A:=\na_{\gatpchi}:\H^{1}_{\gatpchi}(\Omega_{\Phi_{\chi}})\subset\L^{2}_{\nu}(\Omega_{\Phi_{\chi}})\to\L^{2}_{\eps}(\Omega_{\Phi_{\chi}});$$
recall  their adjoints
$$\A_{\chi}^{*}=-\nu_{\Phi_{\chi}}^{-1}\div_{\gan}\eps_{\Phi_{\chi}}:\eps_{\Phi_{\chi}}^{-1}\D_{\gan}(\om)\subset\L^{2}_{\eps_{\Phi_{\chi}}}(\om)\to\L^{2}_{\nu_{\Phi_{\chi}}}(\om),$$
$$\A^{*}=-\nu^{-1}\div_{\ganpchi}\eps:\eps^{-1}\D_{\ganpchi}(\Omega_{\Phi_{\chi}})\subset\L^{2}_{\eps}(\Omega_{\Phi_{\chi}})\to\L^{2}_{\nu}(\Omega_{\Phi_{\chi}});$$ 
\item we consider the operator $\TAc :\H^{1}_{\gat}(\om) \to \H^{1}_{\gat}(\om)'_{\sharp}$ defined by
\begin{equation*}
\begin{split}
\scp{\TAc f }{g}&= \scp{\A_\chi f}{\A_\chi g}_{\L^{2}_{\mu_{\Pc}}(\om)} 
= \bscp{\nabla f}{ \nabla g}_{\L^{2}_{\eps_{\Phi_\chi}}(\om)} 
=  \bscp{ \eps_{\Phi_\chi} \nabla f}{ \nabla g}_{\L^{2}(\om)} \\
&= \bscp{(\det J_{\Phi_\chi})\wt{\eps}J_{\Phi_\chi}^{-\top}\nabla f}{J_{\Phi_\chi}^{-\top}\nabla g}_{\L^{2}(\om)}
\end{split}
\end{equation*}
for all $f,g \in \H^{1}_{\gat}(\om)$; recall  that  $\wt \eps =\eps \circ \Phi_{\chi}$;
\item we also consider  the operator $\mathcal{I}_\chi: \L^2(\Omega) \to \H^{1}_{\gat}(\om)'_{\sharp}$ defined by
\begin{equation*}
\scp{\mathcal{I}_\chi  f }{g} 
= \scp{ f}{ g}_{\L^{2}_{\nu_{\Pc}}(\om)} 
= \scp{\nu_{\Pc}f}{g}_{\L^{2}(\om)} 
=\bscp{(\det J_{\Phi_\chi})\wt{\nu}f}{g}_{\L^{2}(\om)}
\end{equation*}
for all $f \in \L^2(\Omega)$ and $g \in \H^{1}_{\gat}(\om)$; recall that  $\wt \nu =\nu \circ \Phi_{\chi}$.
\end{itemize}

We have the equivalent to Lemma \ref{lemmaderchi}. Recall that  $\Psi$ is defined in \eqref{defpsi}.

\begin{lem} \label{lemmaderchihelm} 
Assume that the map $\nu:\mathbb{R}^3 \to \mathbb{R}$ as well as
 the map $\chi \mapsto \Phi_{\chi}$
are  of  class $\C^r$ with $r\in \nat \cup\{\infty\} \cup\{\omega\}$.
Then $\chi \to \scp{   \mathcal{I}_\chi F  }{   G   }$ and $\chi \to \scp{\mathcal{T}_\chi F }{G} $ are of class $\C^r$ and the following statements hold:
\begin{itemize}
\item[(i)] For all $f,g \in \L^2(\Omega)$ 
\begin{equation}
{\partial_{\dot\chi} \scp{   \mathcal{I}_\chi f  }{  g  }}_{\vert_{\chi=\bar\chi}}
= \scp{ \left( \p_{\Psi}\nu+(\div\Psi)\nu \right) \tau_{\Phi_{\bar\chi}^{-1}}^0 f  }{   \tau_{\Phi_{\bar\chi}^{-1}}^0 g   }_{\L^{2}(\Omega_{\Phi_{\bar\chi}})}.
\end{equation}
\item[(ii)] For all $f,g \in \H^{1}_{\gat}(\om)$
\begin{equation} 
{\partial_{\dot\chi}\scp{\mathcal{T}_\chi f }{g}}_{\vert_{\chi=\bar\chi}}
= \scp{ \left(\p_{\Psi  }\eps+(\div\Psi)\eps-2\sym(J_{\Psi}\eps) \right) \na_{\ga_{\!t,\Phi_{\bar\chi}}} \tau_{\Phi_{\bar\chi}^{-1}}^0 f}{\na_{\ga_{\!t,\Phi_{\bar\chi}} } \tau_{\Phi_{\bar\chi}^{-1}}^0 g}_{\L^{2}(\Omega_{\Phi_{\bar\chi}})}.
\end{equation}
\end{itemize}
\end{lem}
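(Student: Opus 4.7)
The plan is to mirror verbatim the argument used for the Maxwell operator in Lemma~\ref{lemmaderchi}, replacing $\mu^{-1}\rot_{\gat}$ by $\na_{\gat}$ and the matrix coefficient $\mu$ by the scalar coefficient $\nu$. Since $\nu$ is scalar, the derivative of $\nu_{\Pc}$ loses the symmetrisation correction, making part~(i) notably simpler; part~(ii) is structurally identical to the derivative of $\scp{\mathcal{I}_\chi \cdot}{\cdot}$ from the Maxwell lemma, with $\eps$ playing the role it already plays there.

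First I would establish the $\C^r$-regularity of $\chi\mapsto\scp{\mathcal{I}_\chi f}{g}$ and $\chi\mapsto\scp{\mathcal{T}_\chi f}{g}$ as an immediate consequence of Lemma~\ref{lemmafrechet}: writing
$$\scp{\mathcal{I}_\chi f}{g}=\int_\Omega(\det J_{\Phi_\chi})\wt\nu\, f\, g\, dx,\qquad \scp{\mathcal{T}_\chi f}{g}=\int_\Omega(\det J_{\Phi_\chi})J_{\Phi_\chi}^{-1}\wt\eps\, J_{\Phi_\chi}^{-\top}\na f\cdot\na g\, dx,$$
both pairings are bilinear forms whose coefficients $\nu_{\Phi_\chi},\eps_{\Phi_\chi}\in\L^\infty(\Omega)$ depend of class $\C^r$ on $\chi$ by Lemma~\ref{lemmafrechet}. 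I would then compute the directional derivatives at $\bar\chi$ by differentiating under the integral sign and invoking the formulas from the appendix of Part~I that were already used in the Maxwell case, namely
$${\p_{\dot\chi}\nu_{\Phi_\chi}}_{\rvert_{\chi=\bar\chi}}=(\det J_{\Phi_{\bar\chi}})\bigl(\p_{\dot\chi}\wt\nu+(\wt{\div\Psi})\wt\nu\bigr)$$
together with the analogous formula for $\p_{\dot\chi}\eps_{\Phi_\chi}$ quoted in the proof of Lemma~\ref{lemmaderchi}. Finally, a change of variables $y=\Phi_{\bar\chi}(x)$ transports the integrals onto $\Omega_{\Phi_{\bar\chi}}$: the factor $\det J_{\Phi_{\bar\chi}}$ is absorbed by the Jacobian of the transformation, $\wt\nu,\wt\eps$ become $\nu,\eps$, the tilde $\chi$-derivatives transform into $\p_\Psi\nu,\p_\Psi\eps$ by the very definition of the push-forward recalled in \eqref{dzeta}, $\wt{\div\Psi}$ becomes $\div\Psi$, the product $\wt{J_\Psi}J_{\Phi_{\bar\chi}}^{-1}$ becomes $J_\Psi$ (since $J_\Psi=(J_{\wt\Psi}J_{\Phi_{\bar\chi}}^{-1})\circ\Phi_{\bar\chi}^{-1}$), and the transformation rule $\na(\tau^0_{\Phi_{\bar\chi}^{-1}}f)=(J_{\Phi_{\bar\chi}}^{-\top}\na f)\circ\Phi_{\bar\chi}^{-1}$ absorbs the two outer factors $J_{\Phi_{\bar\chi}}^{-1},J_{\Phi_{\bar\chi}}^{-\top}$ coming from $\p_{\dot\chi}\eps_{\Phi_\chi}$. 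Collecting these identifications yields the two displayed formulas.

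No serious obstacle is anticipated: every ingredient is already available from Part~I or from the analogous Maxwell computation. The only point requiring some attention is the bookkeeping in the change of variables, in particular verifying the correct order of the factors in $\wt{J_\Psi}\wt\eps$ so that the symmetrisation $\sym(J_\Psi\eps)$ matches the statement---exactly the same step carried out in the (commented) detailed Maxwell argument, which transfers here unchanged and in fact simplifies for part~(i) where no symmetrisation is present.
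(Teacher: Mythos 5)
Your proposal is correct and follows essentially the same route as the paper: the regularity is read off from Lemma~\ref{lemmafrechet}, the derivatives $\p_{\dot\chi}\nu_{\Phi_\chi}$ and $\p_{\dot\chi}\eps_{\Phi_\chi}$ are taken from the appendix of Part~I, and a change of variables together with the rule $\na\tau^0_{\Phi^{-1}}f=\tau^1_{\Phi^{-1}}\na f=(J_{\Phi}^{-\top}\na f)\circ\Phi^{-1}$ yields the two formulas. Your observation that part~(ii) reuses the same coefficient computation as the Maxwell $\mathcal{I}_\chi$ is exactly the shortcut the paper relies on.
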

\begin{proof}
The regularity is easily deduced by Lemma \ref{lemmafrechet}. In order to prove the formulas in the statement, 
it suffices to use the appendix of the first part, perform a change of variables, and  recall that 
\begin{equation*}
\tau^0_{\Phi^{-1}} f =  f  \circ \Phi^{-1}, \qquad \nabla \tau^0_{\Phi^{-1}} f
 = \tau^1_{\Phi^{-1}} \nabla f 
=  \left( J_{\Phi}^{-\top} \nabla f \right) \circ \Phi^{-1}.
\end{equation*}
\end{proof}

The following theorem is an immediate consequence of Theorem~\ref{symtheo} and Lemma~\ref{lemmaderchihelm}.
\begin{theo}\label{symtheomaxhe}
Let $r\in \nat \cup\{\infty\}\cup\{\omega\}$ and  let $\eps$, $\mu$, and $\nu$ be of class $\C^r$. 
Let $\Omega$ be a bounded Lipschitz domain in $\mathbb{R}^3$ and let  $\{\Pc\}_{\chi \in \mathcal{X}}$
 be a family of  bi-Lipschitz transformations   $\Phi_\chi \in \mathcal{L}(\Omega)$  such that the map $\chi \mapsto \Phi_{\chi}$
is of class $\C^r$.  Let $\bar \chi\in \mathcal{X}$ be fixed.  Let $\bar \lambda$ be an eigenvalue of  the Helmholtz problem  on $\Omega_{\Phi_{\bar\chi}}$. Assume $\bar \lambda$ has multiplicity $m$ and let
 $\bar\lambda=\lambda_{\bar\chi,k}=\dots = \lambda_{\bar\chi, k+m-1}$ for some $k \in \nat$. Let $F=\{k,k+1, \dots, k+m-1\}$ and let $\Lambda_{F,s}$ be the elementary symmetric functions of the Helmholtz ,  defined as in \eqref{symfunct}.   Then the functions $\Lambda_{F,s}$ are of class $\C^r$ in a neighborhood of $\bar\chi$ and  their directional derivatives   at the point $\bar \chi$ in the direction $\dot\chi$  are given by the  formula
\begin{equation*}
\begin{split} 
& \bar \lambda ^{s-1}\binom{m-1}{s-1}\sum_{h=1}^m
\Big( \scp{ \left(\p_{\Psi  }\eps+(\div\Psi)\eps-2\sym(J_{\Psi}\eps) \right) \na_{\ga_{\!t,\Phi_{\bar\chi}}} u_{\bar\chi,h}}{\na_{\ga_{\!t,\Phi_{\bar\chi}}} u_{\bar\chi,h} }_{\L^{2}(\Omega_{\Phi_{\bar \chi}})}\\
&\qquad\quad\qquad\qquad\qquad\quad\qquad
\qquad - \bar \lambda \scp{ \left( \p_{\Psi}\nu+(\div\Psi)\nu \right) u_{\bar\chi,h} }{ u_{\bar\chi,h}  }_{\L^{2}(\Omega_{\Phi_{\bar \chi}})} \Big) \, ,
\end{split}
\end{equation*}
where  $\{u_{\bar\chi,h}\}_{h=1,\dots,m}$ is an orthonormal basis   in $\L^{2}_{\nu}(\Omega_{\Phi_{\bar\chi}})$ of the eigenspace  associated with $\bar\lambda$, and $\Psi$ is as in \eqref{defpsi}.
\end{theo}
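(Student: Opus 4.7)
The plan is to mirror, step by step, the argument just used for the Maxwell case in Theorem~\ref{symtheomax}, replacing the operator $\rot_{\gat}$ with $\na_{\gat}$ and the coefficients $(\mu,\eps)$ with $(\eps,\nu)$. First I would verify that the Helmholtz setup fits the abstract framework of Section~\ref{perturbtheory}: the operator $\A_{\chi}=\na_{\gat}$ acting between $\L^{2}_{\nu_{\Phi_{\chi}}}(\om)$ and $\L^{2}_{\eps_{\Phi_{\chi}}}(\om)$ is densely defined and closed with $\chi$-independent domain $\V=\H^{1}_{\gat}(\om)$; the graph norms are equivalent because the coefficients $\eps_{\Phi_{\chi}}$, $\nu_{\Phi_{\chi}}$ are uniformly bounded away from $0$ and $\infty$ locally in $\chi$; and the compactness of the embedding $D(\cA_{\chi})\hookrightarrow\H_{0}$ follows from Weber-type compactness as in Part~I.

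Next I would check the regularity assumption of Definition~\ref{reg}: by Lemma~\ref{lemmafrechet} (applied to $\nu$ and $\eps$) together with Lemma~\ref{lemmafrechet}, the maps $\chi\mapsto\eps_{\Phi_{\chi}}$ and $\chi\mapsto\nu_{\Phi_{\chi}}$ are of class $\C^{r}$ as maps into $\L^{\infty}(\om)$, hence $\chi\mapsto\TAc$ and $\chi\mapsto\Jc$ are of class $\C^{r}$ into the respective spaces of bounded linear operators. This is exactly the content one needs in order to invoke Theorem~\ref{symtheo}.

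Theorem~\ref{symtheo} then gives two things at once: the $\C^{r}$ regularity of the elementary symmetric functions $\Lambda_{F,s}$ on a neighborhood of $\bar\chi$, and the formula
\begin{equation*}
\partial_{\dot\chi}\Lambda_{F,s}[\chi]_{\big\rvert_{\chi=\bar\chi}}
=\bar\lambda^{s-1}\binom{m-1}{s-1}\sum_{h=1}^{m}
\Bigl(\partial_{\dot\chi}\scp{\A_{\chi}u_{\bar\chi,h}}{\A_{\chi}u_{\bar\chi,h}}_{\H_{1,\chi}}
-\bar\lambda\,\partial_{\dot\chi}\scp{u_{\bar\chi,h}}{u_{\bar\chi,h}}_{\H_{0,\chi}}\Bigr).
\end{equation*}
To translate the two scalar products into integrals on $\Omega_{\Phi_{\bar\chi}}$, I would apply Lemma~\ref{lemmaderchihelm}(i),(ii) with $F=G=u_{\bar\chi,h}$ (viewed as pulled back via $\tau^{0}_{\Phi_{\bar\chi}}$), and use the fact that the map $\tau^{0}_{\Phi_{\bar\chi}^{-1}}$ is an isometry which carries the chosen orthonormal basis in $\L^{2}_{\nu_{\Phi_{\bar\chi}}}(\om)$ onto an orthonormal basis $\{u_{\bar\chi,h}\}_{h=1,\dots,m}$ in $\L^{2}_{\nu}(\Omega_{\Phi_{\bar\chi}})$ of the $\bar\lambda$-eigenspace of the Helmholtz operator. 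Substituting these two expressions into the abstract formula yields exactly the claimed formula in the statement.

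The only non-routine point I anticipate is the bookkeeping around the two basis transports (from $u_{\bar\chi,h}\in\L^{2}_{\nu_{\Phi_{\bar\chi}}}(\om)$ to $u_{\bar\chi,h}\in\L^{2}_{\nu}(\Omega_{\Phi_{\bar\chi}})$) and making sure that the orthonormality on the pulled-back side really corresponds to the one on $\Omega_{\Phi_{\bar\chi}}$ with respect to the $\nu$-weighted inner product; this, however, is just the isometry property of $\tau^{0}_{\Phi_{\bar\chi}}$. No additional boundary regularity on $\Omega$ is needed at this stage, exactly as in the Maxwell counterpart.
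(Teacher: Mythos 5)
Your proposal is correct and follows essentially the same route as the paper, which states this theorem as an immediate consequence of Theorem~\ref{symtheo} combined with Lemma~\ref{lemmaderchihelm}, using precisely the isometry property of $\tau^{0}_{\Phi_{\bar\chi}^{-1}}$ to transport the orthonormal basis of the eigenspace (the same bookkeeping is spelled out explicitly in the proof of Theorem~\ref{theo:shapederivativeshe}). The only cosmetic slip is the duplicated citation of Lemma~\ref{lemmafrechet}, where the first occurrence should presumably be Lemma~\ref{epsreg}; this does not affect the argument.
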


By Theorem~\ref{Rellichthm}    we also deduce the following theorem where in particular we recover the formulas in our Theorem~4.1 in Part I of this series of papers when $\bar \lambda$ is simple.

\begin{theo}  \label{theo:shapederivativeshe}
Let the same assumptions of Theorem~\ref{symtheomaxhe} hold. Assume that $r=\{1,\omega\}$ and $X={\mathbb{R}}$.
Then for $\chi$ sufficiently close to $\bar \chi$, the eigenvalues  $\lambda_{\chi, k}, \dots ,$ $  \lambda_{\chi, k+m-1}$ can be represented by $m$      functions $\chi \mapsto \zeta_{\chi, k}, \dots , \zeta_{\chi, k+m-1}$ of class $\C^r$.  
Moreover the derivatives of those functions at $\bar \chi$ are given by the eigenvalues of the following matrix
\begin{equation*}
\begin{split}
&\Big( \scp{ \left(\p_{\Psi  }\eps+(\div\Psi)\eps-2\sym(J_{\Psi}\eps) \right) \na_{\ga_{\!t,\Phi_{\bar\chi}}} u_{\bar\chi,h}}{\na_{\ga_{\!t,\Phi_{\bar\chi}}} u_{\bar\chi,l} }_{\L^{2}(\Omega_{\Phi_{\bar \chi}})} \\
&\qquad - \bar \lambda \scp{ \left( \p_{\Psi}\nu+(\div\Psi)\nu \right) u_{\bar\chi,h} }{ u_{\bar\chi,l}  }_{\L^{2}(\Omega_{\Phi_{\bar \chi}})} \Big)_{h,l=1,\dots,m},
\end{split}
\end{equation*}
where  $\{u_{\bar\chi,h}\}_{h=1,\dots,m}$ is an orthonormal basis   in $ \L^{2}_{\nu}(\Omega_{\Phi_{\bar\chi}})$ of the eigenspace  associated with $\bar\lambda$. 
\end{theo}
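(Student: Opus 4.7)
The plan is to follow essentially verbatim the scheme used for Theorem~\ref{theo:shapederivatives} in the Maxwell setting, since Theorem~\ref{theo:shapederivativeshe} is the exact Helmholtz counterpart and all the abstract machinery has already been set up in Section~\ref{sec:shape2sound}. The key observation is that, with the specialisation of the Hilbert spaces $\H_{0,\chi}=\L^{2}_{\nu_{\Phi_{\chi}}}(\om)$, $\H_{1,\chi}=\L^{2}_{\eps_{\Phi_{\chi}}}(\om)$, and the operator $\A_{\chi}=\na_{\gat}$ described just before Lemma~\ref{lemmaderchihelm}, the family $(\A_{\chi},\Jc,\TAc)$ satisfies the regularity assumption of Definition~\ref{reg} with exponent $r$. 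This is a direct consequence of Lemma~\ref{lemmafrechet} applied to $\nu$ and $\eps$, combined with the standing hypothesis that $\chi\mapsto \Phi_{\chi}$ is of class $\C^{r}$; concretely, the maps $\chi\mapsto \scp{\Jc f}{g}$ and $\chi\mapsto \scp{\TAc f}{g}$ are compositions of analytic operations (determinant, inversion, transposition) with $\C^{r}$ maps.

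Next, I would invoke Theorem~\ref{Rellichthm} in this specialised setting. Since by hypothesis $X=\mathbb{R}$ and $r\in\{1,\omega\}$, the theorem produces $m$ functions $\chi\mapsto \zeta_{\chi,k},\dots,\zeta_{\chi,k+m-1}$ of class $\C^{r}$ representing the eigenvalue branches, and identifies their derivatives at $\bar\chi$ with the eigenvalues of the $m\times m$ symmetric matrix
\[
\Bigl(\partial_{\dot\chi}\scp{\TAc F_{\bar\chi,h}}{F_{\bar\chi,l}} -\bar\lambda\,\partial_{\dot\chi}\scp{\Jc F_{\bar\chi,h}}{F_{\bar\chi,l}}\Bigr)_{h,l=1,\dots,m},
\]
where $F_{\bar\chi,1},\dots,F_{\bar\chi,m}$ is an orthonormal basis of the eigenspace of $\A^{*}_{\bar\chi}\A_{\bar\chi}$ in $\H_{0,\bar\chi}=\L^{2}_{\nu_{\Phi_{\bar\chi}}}(\om)$ associated with $\bar\lambda$.

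The last step is to rewrite this matrix on the transplanted domain $\Omega_{\Phi_{\bar\chi}}$. Using the pull-back correspondence established in Part~I, $E_{\bar\chi,h}:=\tau^{0}_{\Phi^{-1}_{\bar\chi}}F_{\bar\chi,h}$ is an orthonormal basis in $\L^{2}_{\nu}(\Omega_{\Phi_{\bar\chi}})$ of the eigenspace of $\A^{*}\A=-\nu^{-1}\div_{\ganp}\eps\na_{\gatp}$ for $\bar\lambda$ (here I relabel $E_{\bar\chi,h}$ as $u_{\bar\chi,h}$ to match the statement). Applying Lemma~\ref{lemmaderchihelm} (i) and (ii) to each pair $(F_{\bar\chi,h},F_{\bar\chi,l})$ directly converts the abstract matrix entries into the volume integrals on $\Omega_{\Phi_{\bar\chi}}$ written in the statement.

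I do not expect any real obstacle; the proof is a clean specialisation of the abstract Rellich Theorem~\ref{Rellichthm} combined with the differentiability computation of Lemma~\ref{lemmaderchihelm}. The only slightly delicate point is the bookkeeping in identifying orthonormal bases: ensuring that the basis $\{F_{\bar\chi,h}\}$ of the eigenspace of $\A^{*}_{\bar\chi}\A_{\bar\chi}$ used in Theorem~\ref{Rellichthm} corresponds under the isometry $\tau^{0}_{\Phi^{-1}_{\bar\chi}}:\L^{2}_{\nu_{\Phi_{\bar\chi}}}(\om)\to\L^{2}_{\nu}(\Omega_{\Phi_{\bar\chi}})$ to the orthonormal basis $\{u_{\bar\chi,h}\}$ on the transplanted domain appearing in the statement. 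Once this identification is fixed, the formula is immediate.
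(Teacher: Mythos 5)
Your proposal is correct and follows essentially the same route as the paper's own proof: identify the eigenfunctions of $\A^{*}_{\bar\chi}\A_{\bar\chi}$ with those of $\A^{*}\A$ on $\Omega_{\Phi_{\bar\chi}}$ via the pull-back $\tau^{0}_{\Phi^{-1}_{\bar\chi}}$, compute the derivative matrix entries with Lemma~\ref{lemmaderchihelm}, and conclude by Theorem~\ref{Rellichthm}. No gaps.
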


\begin{proof}
Recall from Part I that  $v \in \H^{1}_{\gat}(\om)$ is an eigenfunction of the operator  $\A^*_{\bar\chi}\A_{\bar\chi}$ associated with an eigenvalue  $\bar\lambda$  if and only if  $\tau_{\Phi^{-1}_{\bar \chi}}^1 v$ belongs to $\H^{1}_{\ga_{\!t,\Phi_{\bar\chi}}}(\Omega_{\Phi{\bar \chi}})$ and is an eigenfunction of the operator $\A^*\A$ associated with the same eigenvalue $\bar\lambda$. Moreover, 
$v_{\bar\chi , h}$, $h=1,\dots,m$ is an orthonormal  basis  in $\L^{2}_{\nu_{\Phi_{\chi}}}(\om)$ of the eigenspace  of $\A_{\bar\chi}^*\A_{\bar \chi}$ associated with $\bar \lambda$ if and only if  $u_{\bar\chi,h}=    \tau_{\Phi^{-1}_{\bar \chi}   }^0 v_{\bar\chi, h}$, $h=1,\dots,m$ is an orthonormal basis   in $\L^{2}_{\nu}(\Omega_{\Phi_{\chi}})$ of the eigenspace of   $\A^*\A$ associated with $\bar\lambda$. 
By Lemma \ref{lemmaderchihelm} we have that
\begin{eqnarray}\label{derivativeinteghelm}
\lefteqn{
\partial_{\dot\chi}  \scp{    \TAc \ v_{  \bar\chi, h}  }{   v_{   \bar\chi, l}   }-  \bar\lambda  \partial_{\dot\chi} \scp{   \mathcal{I}_\chi v_{   \bar\chi, h}  }{   v_{   \bar\chi, l} } 
}\nonumber \\ 
& & = \scp{ \left(\p_{\Psi  }\eps+(\div\Psi)\eps-2\sym(J_{\Psi}\eps) \right) \na_{\ga_{\!t,\Phi_{\bar\chi}}} u_{\bar\chi,h}}{\na_{\ga_{\!t,\Phi_{\bar\chi}}} u_{\bar\chi,l} }_{\L^{2}(\Omega_{\Phi_{\bar \chi}})} \\
& & \quad - \bar \lambda \scp{ \left( \p_{\Psi}\nu+(\div\Psi)\nu \right) u_{\bar\chi,h} }{ u_{\bar\chi,l}  }_{\L^{2}(\Omega_{\Phi_{\bar \chi}})} \nonumber
\end{eqnarray}
which combined with  Theorem~\ref{Rellichthm}  completes the proof.
\end{proof}

%\begin{rem} Note that we need to assume full Dirichlet or Neumann boundary conditions, i.e. either $\Gamma_t=\Gamma$ or $\Gamma_t = \emptyset$ (they are dual to each other and they give the same Maxwell eigenvalues). This is because if $\Omega$ is a bounded Lipschitz domain, either convex or of class $\C^{1,\alpha}$ with $\alpha \in (1/2,1)$, then the Gaffney inequality for the Hilbert spaces $R_{\Gamma}(\Omega) \cap \eps^{-1}\D(\Omega)$ and $R(\Omega) \cap \eps^{-1}\D_{\Gamma}(\Omega)$ is valid. Note that there is no Gaffney inequality of the mixed boundary conditions space $R_{\Gamma_t}(\Omega) \cap \eps^{-1}\D_{\Gamma_n}(\Omega)$ because that would imply continuity of the traces at the boundary, which is impossible. \end{rem}

\begin{cor} \label{coroll:surfaceint} Assume that $\bar \lambda$ and  $\{u_{\bar\chi,h}\}_{h=1,\dots,m}$ are as in  Theorem~\ref{symtheomaxhe} and assume in addition that  $u_{\bar\chi,h} \in \H^2(\Omega_{\Phi_{\bar \chi}})$ for all $h=1,\dots,m$.
Then the following statements hold. 
\begin{itemize}
\item[(i)]  The directional derivatives   at the point $\bar \chi$ in the direction $\dot\chi$ of the functions $\Lambda_{F,s}$ from Theorem~\ref{symtheomaxhe} can be written as 
\begin{equation}
\begin{split}
 & \bar \lambda ^{s-1}\binom{m-1}{s-1}\sum_{h=1}^m
\Bigg(
\int_{\Gamma_{n,\Phi_{\bar\chi}}} \left( \eps \nabla u_{\bar\chi,h} \cdot \nabla \overline{u_{\bar\chi,h}} - \bar \lambda \, \nu \, u_{\bar\chi,h} \, \overline{u_{\bar\chi,h}} \right) (\Psi \cdot n) d\sigma \\
&\qquad \qquad \qquad  \qquad \qquad-\int_{\Gamma_{t,\Phi_{\bar\chi}}} (\eps \nabla u_{\bar\chi,h} \cdot \nabla \overline{u_{\bar\chi,h}}) (\Psi \cdot n) d\sigma
\Bigg).
\end{split}
\end{equation}
\item[(ii)]
The derivatives   at a point  $\bar \chi\in \mathbb{R}$  of the  functions $\zeta_{\chi, k}, \dots , \zeta_{\chi, k+m-1}$  from  Theorem~\ref{theo:shapederivativeshe} are given by the eigenvalues of the following matrix:
\begin{equation}\label{hirakawahe}
\begin{split}
& \Bigg( 
\int_{\Gamma_{n,\Phi_{\bar\chi}}} \left( \eps \nabla u_{\bar\chi,h} \cdot \nabla \overline{u_{\bar\chi,l}} - \bar \lambda \, \nu \, u_{\bar\chi,h} \, \overline{u_{\bar\chi,l}} \right) (\Psi \cdot n) d\sigma   \\
& \qquad\qquad\qquad\qquad\qquad\qquad-\int_{\Gamma_{t,\Phi_{\bar\chi}}} (\eps \nabla u_{\bar\chi,h} \cdot \nabla \overline{u_{\bar\chi,l}}) (\Psi \cdot n) d\sigma
\Bigg)_{h,l=1,\dots,m} .
\end{split}
\end{equation}
\end{itemize}
\end{cor}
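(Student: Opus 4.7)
The plan is to mimic precisely the passage from Theorems \ref{symtheomax}, \ref{theo:shapederivatives} to Corollary \ref{coroll:surfaceinthe}. Both items (i) and (ii) will follow immediately once the following Helmholtz analogue of Lemma \ref{lem:maxsurfaceint} is established: for any two eigenfunctions $u,v\in\H^{1}_{\gatp}(\Omega_{\Phi_{\bar\chi}})\cap \H^{2}(\Omega_{\Phi_{\bar\chi}})$ of the Helmholtz problem \eqref{helmprobl} associated with the same eigenvalue $\bar\lambda\ne 0$,
\begin{align*}
& \scp{(\p_{\Psi}\eps+(\div\Psi)\eps-2\sym(J_{\Psi}\eps))\nabla u}{\nabla v}_{\L^{2}(\Omega_{\Phi_{\bar\chi}})}
  -\bar\lambda\,\scp{(\p_{\Psi}\nu+(\div\Psi)\nu)\,u}{v}_{\L^{2}(\Omega_{\Phi_{\bar\chi}})}\\
&\quad= \int_{\Gamma_{n,\Phi_{\bar\chi}}} \left(\eps\nabla u\cdot\nabla\bar v - \bar\lambda\,\nu\,u\,\bar v\right)(\Psi\cdot n)\,d\sigma
  -\int_{\Gamma_{t,\Phi_{\bar\chi}}}(\eps\nabla u\cdot\nabla\bar v)(\Psi\cdot n)\,d\sigma.
\end{align*}
Indeed, plugging this identity with $(u,v)=(u_{\bar\chi,h},u_{\bar\chi,h})$ into the formula of Theorem \ref{symtheomaxhe} yields (i), and with $(u,v)=(u_{\bar\chi,h},u_{\bar\chi,l})$ into Theorem \ref{theo:shapederivativeshe} yields (ii); the additional $\H^2$ hypothesis on the eigenfunctions is precisely what justifies all the integrations by parts below.

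To prove the identity I would proceed in the spirit of Lemma \ref{lem:maxsurfaceint}, dropping the subscript $\bar\chi$ for brevity. The computation is driven by four ingredients: the Dirichlet condition $u=v=0$ on $\Gamma_{t,\Phi}$, the Neumann condition $n\cdot\eps\nabla u=n\cdot\eps\nabla\bar v=0$ on $\Gamma_{n,\Phi}$, the interior equations $-\div(\eps\nabla u)=\bar\lambda\nu u$ and $-\div(\eps\nabla\bar v)=\bar\lambda\nu\bar v$, and the $\H^2$ hypothesis (which makes all Hessians legitimate $L^2$ objects and all boundary traces well defined). The starting point is the pointwise identity
\[
\nabla(\eps\nabla u\cdot\nabla\bar v)\cdot\Psi
= (\p_\Psi\eps)\nabla u\cdot\nabla\bar v
 + (\eps\nabla\bar v)\cdot(J_{\nabla u}\Psi)
 + (\eps\nabla u)\cdot(J_{\nabla\bar v}\Psi).
\]
Integrating over $\Omega_\Phi$ and applying the divergence theorem on the left extracts the boundary term $\int_{\Gamma_\Phi}(\eps\nabla u\cdot\nabla\bar v)(\Psi\cdot n)\,d\sigma$ and the interior contribution $-\int_{\Omega_\Phi}(\eps\nabla u\cdot\nabla\bar v)\,\div\Psi$. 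Next I integrate by parts a second time in each of the two Hessian terms, using the eigenvalue equations to turn $\div(\eps\nabla u)$ and $\div(\eps\nabla\bar v)$ into $-\bar\lambda\nu u$ and $-\bar\lambda\nu\bar v$; this produces the commutator piece $-2\sym(J_\Psi\eps)\nabla u\cdot\nabla\bar v$ (using $\eps^\top=\eps$) that completes the left-hand side of the claim, together with two mass-type contributions of the form $\bar\lambda\nu(u\,\Psi\cdot\eps\nabla\bar v+\bar v\,\Psi\cdot\eps\nabla u)$ and boundary pieces carrying $\Psi\cdot\eps\nabla u$ and $\Psi\cdot\eps\nabla\bar v$, which vanish on $\Gamma_{n,\Phi}$ by the Neumann condition and are killed on $\Gamma_{t,\Phi}$ because $u=\bar v=0$ there. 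Finally, the mass-type contributions are combined with $\bar\lambda(\p_\Psi\nu+(\div\Psi)\nu)u\bar v$ by writing the latter as $\bar\lambda\div(\nu u\bar v\,\Psi) - \bar\lambda\nu\,\Psi\cdot\nabla(u\bar v)$ and applying the divergence theorem once more: the interior pieces cancel exactly, leaving the boundary term $-\bar\lambda\int_{\Gamma_\Phi}\nu u\bar v(\Psi\cdot n)\,d\sigma$, whose restriction to $\Gamma_{t,\Phi}$ vanishes by $u=\bar v=0$. This is exactly the source of the asymmetry between the two surface integrals in the statement.

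The main obstacle is pure bookkeeping, as in the Maxwell case: one must arrange the successive integrations by parts so that the three tensor pieces $\p_\Psi\eps$, $(\div\Psi)\eps$, $-2\sym(J_\Psi\eps)$ on the gradient side and $\p_\Psi\nu$, $(\div\Psi)\nu$ on the mass side all appear with the correct signs, and so that the boundary contributions from $\Gamma_{t,\Phi}$ and $\Gamma_{n,\Phi}$ combine with the right relative signs dictated by the mixed boundary conditions. Compared with Lemma \ref{lem:maxsurfaceint}, no Gaffney inequality is needed since the $\H^2$-regularity of the eigenfunctions is assumed directly, so the scalar analysis is strictly simpler than the vectorial one, but the combinatorics of the boundary terms is essentially the same.
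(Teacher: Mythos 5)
Your overall strategy is exactly the paper's: the corollary is reduced to a bilinear surface-integral identity for pairs of Helmholtz eigenfunctions (the paper's Lemma~\ref{lem:helmsurfaceint}), which is then fed into Theorems~\ref{symtheomaxhe} and~\ref{theo:shapederivativeshe}, and that identity is proved by two rounds of integration by parts using the interior equation $-\div(\eps\nabla u)=\bar\lambda\nu u$ and the mixed boundary conditions, with the $\H^2$ hypothesis legitimising the Hessians and traces. Up to the order in which the derivatives are moved around, your computation and the paper's produce the same intermediate identity
\begin{equation*}
\int_{\Gamma_\Phi}\bigl(\eps\nabla u\cdot\nabla\bar v-\bar\lambda\nu u\bar v\bigr)(\Psi\cdot n)\,d\sigma
-\int_{\Gamma_\Phi}(\Psi\cdot\nabla\bar v)(\eps\nabla u\cdot n)\,d\sigma
-\int_{\Gamma_\Phi}(\Psi\cdot\nabla u)(\eps\nabla\bar v\cdot n)\,d\sigma.
\end{equation*}

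There is, however, a concrete error in your boundary bookkeeping. You assert that the cross boundary terms (the last two integrals above) vanish on $\Gamma_{t,\Phi}$ ``because $u=\bar v=0$ there.'' They do not: the Dirichlet condition only kills the tangential gradient, so on $\Gamma_{t,\Phi}$ one has $\nabla u=(\partial_n u)n$ and $\nabla\bar v=(\partial_n\bar v)n$, whence each cross term equals $(\eps n\cdot n)\,\partial_n u\,\partial_n\bar v\,(\Psi\cdot n)=(\eps\nabla u\cdot\nabla\bar v)(\Psi\cdot n)$. It is precisely the combination $1-1-1=-1$ of these non-vanishing contributions with the first integral that produces the minus sign in front of $\int_{\Gamma_{t,\Phi}}(\eps\nabla u\cdot\nabla\bar v)(\Psi\cdot n)\,d\sigma$ in the statement; if the cross terms were killed on $\Gamma_{t,\Phi}$ as you claim, your computation would end with $+\int_{\Gamma_{t,\Phi}}$ instead of $-\int_{\Gamma_{t,\Phi}}$, i.e.\ the wrong formula. (The asymmetry you attribute to the $\nu$-term only explains why $\nu u\bar v$ appears solely on $\Gamma_{n,\Phi}$, not the sign flip of the gradient term.) A second, smaller slip: the mass-type interior terms generated by the second integration by parts are $\bar\lambda\nu\bar v\,(\Psi\cdot\nabla u)+\bar\lambda\nu u\,(\Psi\cdot\nabla\bar v)$, with no $\eps$; otherwise they could not cancel $\bar\lambda\nu\,\Psi\cdot\nabla(u\bar v)$ exactly as you need. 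With these two points corrected, your argument coincides with the paper's proof.
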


The proof of Corollary  \ref{coroll:surfaceint} is an immediate application of Thorem \ref{theo:shapederivatives} and the following lemma.

\begin{lem}  \label{lem:helmsurfaceint}
Suppose that $v,w \in \L^{2}_{\nu}(\Omega_{\Phi_{\bar\chi}})$ are Helmholtz eigenfunctions associated with $0 \neq\bar \lambda$. Suppose that $v,w$ are $\H^2$-regular.
Then 
\begin{equation}
\begin{split}
&\scp{ \left(\p_{\Psi  }\eps+(\div\Psi)\eps-2\sym(J_{\Psi}\eps) \right) \na_{\ga_{\!t,\Phi_{\bar\chi}}} v}{\na_{\ga_{\!t,\Phi_{\bar\chi}}} w }_{\L^{2}(\Omega_{\Phi_{\bar \chi}})} \\
&\qquad \qquad  - \bar \lambda \scp{ \left( \p_{\Psi}\nu+(\div\Psi)\nu \right) v }{ w  }_{\L^{2}(\Omega_{\Phi_{\bar \chi}})} \\
& = \int_{\Gamma_{n,\Phi_{\bar\chi}}} \left( \eps \nabla v \cdot \nabla \overline{w} - \bar \lambda \,\nu \, v \, \overline{w} \right) (\Psi \cdot n) d\sigma -\int_{\Gamma_{t,\Phi_{\bar\chi}}} (\eps \nabla v \cdot \nabla \overline{w}) (\Psi \cdot n) d\sigma.
\end{split}
\end{equation}
\end{lem}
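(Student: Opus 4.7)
The plan is to mirror the proof of Lemma \ref{lem:maxsurfaceint}, exploiting the fact that the Helmholtz setting is essentially the same type of bilinear-form identity but simpler: the operator $\A = \nabla$ is first order with no cross-coupling of components, and both $v$ and $w$ are scalars in $\H^2(\Omega_{\Phi_{\bar\chi}})$. Write $\Phi$, $\lambda$, $\Gamma$, $\Gamma_t$, $\Gamma_n$, $\Omega$ in place of the subscripted objects to lighten notation. The eigenvalue equation gives $-\div(\eps\na v)=\lambda\nu v$ and $-\div(\eps\na\overline w)=\lambda\nu\overline w$ pointwise in $\Omega$, together with $v=w=0$ on $\Gamma_t$ and $\eps\na v\cdot n=\eps\na w\cdot n=0$ on $\Gamma_n$. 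In particular $\na v = (\p_n v)\,n$ and $\na\overline w = (\p_n\overline w)\,n$ along $\Gamma_t$.

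First I would introduce the auxiliary quantity
\begin{equation*}
Q := \int_{\Omega}\Bigl(\p_\Psi\eps - 2\sym(J_\Psi\eps)\Bigr)\na v\cdot\na\overline w\,dy
\end{equation*}
and compute it exactly as in equations \eqref{eq:Q1}--\eqref{eq:Q2} of the Maxwell proof, but with $F=\na v$, $G=\na w$. The integration by parts in the $-2\sym(J_\Psi\eps)$ term now uses the pointwise identity
\begin{equation*}
\na(\eps\na v\cdot\na\overline w)\cdot\Psi
= (\p_\Psi\eps)\na v\cdot\na\overline w
+ \eps\na v\cdot J_{\na\overline w}\Psi
+ \eps\na\overline w\cdot J_{\na v}\Psi,
\end{equation*}
valid because $v,w\in\H^2(\Omega)$ and $\eps$ is symmetric. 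This yields an expression for $Q$ as a sum of a gradient-of-product term, terms involving $\div(\eps\na v)$ and $\div(\eps\na\overline w)$ produced by integration by parts, and boundary contributions. On the divergence terms I substitute the eigenvalue equation to pick up $\lambda\nu v$ and $\lambda\nu\overline w$, and I use the boundary conditions to simplify the boundary pieces: on $\Gamma_n$ the normal component of $\eps\na v$ and $\eps\na\overline w$ vanishes; on $\Gamma_t$ the factors $v$ and $\overline w$ themselves vanish while $\na v$ and $\na\overline w$ are parallel to $n$.

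Next I would handle the $\lambda$-piece of the LHS by the same recipe: from
\begin{equation*}
\na(\nu v\overline w)\cdot\Psi
=(\p_\Psi\nu)v\overline w+\nu(\na v\cdot\Psi)\overline w+\nu v(\na\overline w\cdot\Psi)
\end{equation*}
and the divergence theorem applied to $\nu v\overline w\,\Psi$, I get
\begin{equation*}
\int_\Omega\bigl(\p_\Psi\nu + (\div\Psi)\nu\bigr)v\overline w\,dy
= \int_\Gamma \nu v\overline w\,(n\cdot\Psi)\,d\sigma
- \int_\Omega \nu\bigl((\na v\cdot\Psi)\overline w+v(\na\overline w\cdot\Psi)\bigr)dy.
\end{equation*}
The volume remainder here is exactly the contribution that pairs with the $\lambda\nu v$, $\lambda\nu\overline w$ terms produced in the computation of $Q$ when one invokes the eigenvalue equation, so these volume terms cancel against one another. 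Finally I would add $-\lambda$ times this identity to the expression for $Q$ (to reproduce the full LHS of the lemma, modulo the $(\div\Psi)\eps$ term, which comes out of combining $Q$ with $\int_\Omega\eps\na v\cdot\na\overline w\,\div\Psi$ via another divergence-theorem step), and read off the surface integrals. The boundary conditions convert $\int_\Gamma$ into $\int_{\Gamma_n}+\int_{\Gamma_t}$ with precisely the signs stated: on $\Gamma_t$ the $\nu v\overline w$-boundary piece drops because $v=w=0$, and on $\Gamma_n$ the normal flux conditions remove the stray mixed contributions, leaving the signs $+\int_{\Gamma_n}-\int_{\Gamma_t}$ required by the statement.

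The main obstacle I expect is the careful accounting of boundary terms when moving derivatives off of $\Psi$ onto $\eps\na v$, $\eps\na\overline w$ and $\nu v\overline w$: each integration by parts produces a boundary integral on the whole of $\Gamma$, and only by repeatedly using the decomposition $\Gamma=\Gamma_t\cup\Gamma_n$ together with $v=w=0$ on $\Gamma_t$ and $\eps\na v\cdot n=\eps\na w\cdot n=0$ on $\Gamma_n$ does one see that the surviving boundary contributions collapse into the two clean pieces on the right-hand side. All the bulk manipulations are purely algebraic consequences of $v,w\in\H^2(\Omega)$ and the $\C^1$ regularity of $\eps,\nu$.
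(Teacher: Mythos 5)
Your proposal is correct and follows essentially the same route as the paper: integration by parts on the $\sym(J_\Psi\eps)$ and $\p_\Psi\nu$ terms, substitution of the eigenvalue equation $-\div(\eps\na v)=\bar\lambda\nu v$, and the boundary conditions $v=w=0$ on $\Gamma_t$ (forcing $\na v,\na w$ parallel to $n$) and $\eps\na v\cdot n=\eps\na w\cdot n=0$ on $\Gamma_n$ to collapse the boundary terms into the stated $+\int_{\Gamma_n}-\int_{\Gamma_t}$ combination. The only difference is organizational: you package the computation around a quantity $Q$ as in the Maxwell lemma, whereas the paper computes $\int J_\Psi\eps\,\na v\cdot\na\overline w$ and its transpose directly (exchanging second derivatives in the latter), but the underlying identities and cancellations are identical.
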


\begin{proof}
To ease the notation,  throughout this proof we write  $\Phi, \lambda$ instead of $\Phi_{\bar\chi}, \bar{\lambda}$, and we use the Einstein notation omitting the summation symbols.

First recall that $(\partial_\Psi \eps)_{kj} = \nabla \eps_{kj} \cdot \Psi = \Psi_i  \, \partial_i \eps_{kj}$ (cf. also formula \eqref{dzeta}) and that $-\div(\eps \nabla v)= \nu \, \lambda \, v$. Observe also that the quantities $\Psi, \nu, \eps$ have  real-valued entries, thus they coincide with their complex conjugates. The complex conjugate of $w$ is denoted by $\overline{w}$.

By integrating by parts we have
\begin{equation} \label{form:1}
\begin{split}
\int_{\omp} J_{\Psi}\eps \ \nabla v \cdot \nabla \overline{w} & = \int_{\omp}  (\partial_k \Psi_i) \eps_{kj} \, \partial_j v \, \partial_i \overline{w} = \int_{\Gamma_\Phi}  \Psi_i \, \eps_{kj} \, \partial_j v \, \partial_i \overline{w} \, n_k \, d\sigma -\int_{\omp} \Psi_i  \, (\partial_k \eps_{kj}) \, \partial_j v \, \partial_i \overline{w} \\
& \qquad - \int_{\omp} \Psi_i \, \eps_{kj} \, (\partial_k \partial_j v) \, \partial_i \overline{w}  - \int_{\omp} \Psi_i \, \eps_{kj} \, \partial_j v \, (\partial_k\partial_i \overline{w})  \\
&=\int_{\Gamma_\Phi}  (\Psi \cdot \nabla \overline{w})(\eps \, \nabla v \cdot n) d\sigma - \int_{\omp} (\Psi \cdot \nabla \overline{w}) \div(\eps \nabla v)  - \int_{\omp} \Psi_i \, \eps_{kj} \, \partial_j v \, (\partial_k\partial_i \overline{w})\\
&=\int_{\Gamma_\Phi}  (\Psi \cdot \nabla \overline{w})(\eps \, \nabla v \cdot n) d\sigma + \lambda \int_{\omp} (\Psi \cdot \nabla \overline{w}) \nu \, v  - \int_{\omp} \Psi_i \, \eps_{kj} \, \partial_j v \, (\partial_k\partial_i \overline{w}).
\end{split}
\end{equation}
Similarly, integrating by parts, exchanging the second-order derivatives, and then integrating by parts once again, we get
\begin{equation} \label{form:2}
\begin{split}
\int_{\omp} (J_{\Psi}\eps)^\top \ \nabla v \cdot \nabla \overline{w} & = \int_{\omp} J_{\Psi}\eps \ \nabla \overline{w} \cdot \nabla v \\
&=\int_{\Gamma_\Phi}  (\Psi \cdot \nabla v)(\eps \, \nabla \overline{w} \cdot n) d\sigma +  \lambda \int_{\omp} (\Psi \cdot \nabla v) \nu \, \overline{w}   - \int_{\omp} \Psi_i \, \eps_{kj} \, \partial_j \overline{w} \, (\partial_k\partial_i v) \\
&=\int_{\Gamma_\Phi}  (\Psi \cdot \nabla v)(\eps \, \nabla \overline{w} \cdot n) d\sigma +  \lambda \int_{\omp} (\Psi \cdot \nabla v) \nu \, \overline{w}   - \int_{\omp} \Psi_i \, \eps_{kj} \, \partial_j \overline{w} \, (\partial_i \partial_k v) \\
&=\int_{\Gamma_\Phi}  (\Psi \cdot \nabla v)(\eps \, \nabla \overline{w} \cdot n) d\sigma +  \lambda \int_{\omp} (\Psi \cdot \nabla v) \nu \, \overline{w}  + \int_{\omp} \div \Psi \, \eps \, \nabla \overline{w} \cdot \nabla v \\
& \qquad +\int_{\omp} \partial_\Psi \eps \, \nabla \overline{w} \cdot \nabla v  + \int_{\omp} \Psi_i \, \eps_{kj} \, (\partial_i \partial_j \overline{w}) \partial_k v -   \int_{\Gamma_\Phi} (\eps \, \nabla \overline{w} \cdot \nabla v) (\Psi \cdot n) d\sigma.
\end{split}
\end{equation}
Moreover, making  again use of the integration by parts we have that
\begin{equation}  \label{form:3}
\begin{split}
\int_{\omp} (\partial_\Psi \nu) \, v \, \overline{w} &= \int_{\omp}  \Psi_i \, \partial_i \nu \, v \, \overline{w} \\
&= -\int_{\omp} \div \Psi \, \nu \, v \, \overline{w} -\int_{\omp} (\Psi \cdot \nabla v) \, \nu \, \overline{w}  - \int_{\omp} (\Psi \cdot \nabla \overline{w}) \, \nu \, v + \int_{\Gamma_\Phi} \nu \, v \, \overline{w} \, (\Psi \cdot n).
\end{split}
\end{equation}
Hence, recalling that $\eps$ is a symmetric matrix and that the summation indices are mute, and using \eqref{form:1}, \eqref{form:2}, \eqref{form:3}, we obtain that
\begin{equation*}
\begin{split}
&\scp{ \left(\p_{\Psi  }\eps+(\div\Psi)\eps-2\sym(J_{\Psi}\eps) \right) \nabla v}{\nabla w }_{\L^{2}(\omp)}  -  \lambda \scp{ \left( \p_{\Psi}\nu+(\div\Psi)\nu \right) v }{ w}_{\L^{2}(\omp)} \\
&=\int_{\omp} \left(\partial_\Psi \eps + \div \Psi \, \eps - J_\Psi \eps - (J_\Psi \eps)^\top \right) \nabla v \cdot \nabla\overline{w} - \lambda \int_{\omp} \left(  \partial_\Psi \nu + \div \Psi \, \nu \right) v \, \overline{w}\\
&= \int_{\Gamma_\Phi} (\eps \, \nabla v \cdot \nabla \overline{w} -  \lambda \, \nu \, v \, \overline{w}) (\Psi \cdot n) d\sigma -\int_{\Gamma_\Phi} (\Psi \cdot \nabla \overline{w}) (\eps \, \nabla v \cdot n) d\sigma -\int_{\Gamma_\Phi} (\Psi \cdot \nabla v) (\eps \, \nabla \overline{w} \cdot n).
\end{split}
\end{equation*}
Recall now the boundary conditions, i.e. $v,w=0$ on $\Gamma_{t,\Phi}$ and $n \cdot \eps \, \nabla u = n \cdot \eps \, \nabla v=0$ on $\Gamma_{n,\Phi}$. Therefore on $\Gamma_{t,\Phi}$ we have that $\nabla v, \nabla w$ are parallel to the normal $n$, that is $\nabla v = (\nabla v \cdot n) n$ and $\nabla w = (\nabla w \cdot n)n$ on $\Gamma_{t,\Phi}$. Then it is easy to see that 
\begin{equation*}
\begin{split}
& \int_{\Gamma_\Phi} (\eps \, \nabla v \cdot \nabla \overline{w} - \lambda \, \nu \, v \, \overline{w}) (\Psi \cdot n) d\sigma -\int_{\Gamma_\Phi} (\Psi \cdot \nabla \overline{w}) (\eps \, \nabla v \cdot n) d\sigma -\int_{\Gamma_\Phi} (\Psi \cdot \nabla v) (\eps \, \nabla \overline{w} \cdot n)\\
&\quad =\int_{\Gamma_{n,\Phi}} \left( \eps \nabla v \cdot \nabla \overline{w} -  \lambda \,\nu \, v \, \overline{w} \right) (\Psi \cdot n) d\sigma -\int_{\Gamma_{t,\Phi}} (\eps \nabla v \cdot \nabla \overline{w}) (\Psi \cdot n) d\sigma.
\end{split}
\end{equation*}
\end{proof}

\begin{rem}   
If  $\Gamma_t$ and $\Gamma_n$ are separated (in other words $\Gamma=\Gamma_t\cup\Gamma_n$ 
and $\ol{\ga}_{t}\cap\ol{\ga}_{n}=\emptyset$),  $\Omega_\Phi$ is a bounded domain of  class $\C^{1,1}$ and the coefficients $\eps,\nu$ are of class $\C^1$  then  the eigenfunctions  of   the Helmholtz problem  on $\Omega_{\Phi}$   belong to $\H^2(\Omega_\Phi)$, see e.g., \cite{gilbarg, tro}.
\end{rem}

\section*{Acknowledgments}

The authors are thankful to Prof. Massimo Lanza de Cristoforis for useful discussions and for bringing to their attention  items \cite{LA1998},
\cite{LA2000} in the reference list. 
The first  named author acknowledges support by the project ``Perturbation problems and
asymptotics for elliptic differential equations: variational and potential theoretic methods'' funded by the MUR ``Progetti di Ricerca di Rilevante Interesse
Nazionale'' (PRIN) Bando 2022 grant no. 2022SENJZ3; he is also a member of the Gruppo Nazionale per l'Analisi  Matematica, la Probabilit\`{a} e le loro Applicazioni (GNAMPA) of the Istituto Nazionale di Alta Matematica (INdAM).
The third named author is very thankful for the kind 
invitation of the second author and the great hospitality of the 
Fakult\"at Mathematik, Institut f\"ur Analysis of the Technische Universit\"at Dresden
during his visiting period 
when some problems discussed in this paper where addressed.
He also acknowledges  financial support by ``Fondazione Ing. Aldo Gini'' for  his visit in Dresden.
His work was partially realized during his stay
at the Czech Technical University in Prague with the support of the EXPRO grant No.
20-17749X of the Czech Science Foundation. He has also received support from the French
government under the France 2030 investment plan, as part of the Initiative d’Excellence d’Aix-Marseille Université – A*MIDEX AMX-21-RID-012.

%%%%%%%%%%%%%%%%%%%%%%%%%%%%%%%%%%%%%%%%%%%%%%%%%%%%%%%%%%%%%%%%%%%%%%%%%%%%%%%%%%%%%%%%%%%%%%%%%%%%%

\bibliographystyle{plain} 
%\bibliography{/Users/paule/Documents/TeX/input/bibTeX/lpz}

%\bibliography{/Users/lamberti/Documents/ufficio/ricerca/lavori/Dirk/shpdrv/lpz}

\bibliography{lpz}

%%%%%%%%%%%%%%%%%%%%%%%%%%%%%%%%%%%%%%%%%%%%%%%%%%%%%%%%%%%%%%%%%%%%%%%%%%%%%%%%%%%%%%%%%%%%%%%%%%%%%

\section{Appendix}

\subsection{More Details on the Proof of Theorem~\ref{mainpro}}

Let $F$ be the bilinear map from the product space $C(\Gamma , {\mathcal{L}}(V,V'_{\sharp})    )\times \C(\Gamma , {\mathcal{L}}(V'_{\sharp},V)   )$ to $\C(\Gamma , {\mathcal{L}}(V,V)    )$ defined by
\begin{equation}
\label{comp}
F(f,g)=g\circ f
\end{equation}
for all $(f,g)\in  \C(\Gamma , {\mathcal{L}}(V,V'_{\sharp})    )\times \C(\Gamma , {\mathcal{L}}(V'_{\sharp},V)   )$, where formula \eqref{comp} has to be understood as follows:
$$
(g\circ f)(\zeta ) = g(\zeta )\circ f(\zeta )
$$
 for all $\zeta \in \Gamma$.  Since the composition operator defines a bilinear continuous map, it follows that $F(f,g)$ is a well-defined element of 
 $\C(\Gamma , {\mathcal{L}}(V,V)        ) $. Moreover
 $$
 \|  (g\circ f)(\zeta )   \|\le \| g(\zeta )    \|  \| f(\zeta ) \|\le \|  g\| \|  f\| 
 $$
 for all $\zeta \in \Gamma$, hence
 $$
 \|  g \circ f   \| \le \|  g\| \|  f\| 
 $$
which implies that  $F$ is a bilinear continuous map, in particular it is analytic.  

Let  $f_0\in \C(\Gamma , {\mathcal{L}}(V,V'_{\sharp})    )$ be fixed and assume that it has the following property: for all $\zeta \in \Gamma$ the operator $f_0(\zeta )$ is invertible. Since the inversion operator which takes any invertible operator to its inverse is continuous (actually, it is analytic), it follows that the function  $f^{-1}_0$ from $\Gamma $ to $ {\mathcal{L}}(V'_{\sharp },V)   $ defined by 
$$
f^{-1}_0(\zeta )  =(f_0(\zeta ))^{-1}
$$ 
for all $\zeta \in \Gamma$, is an element of $\C(\Gamma , {\mathcal{L}}(V'_{\sharp },V)     )$ and satisfies the equation 
$$F(f_0, f^{-1}_0)=\mathcal I$$ where $\mathcal I$ is the constant function from $\Gamma $ to ${\mathcal{L}}(V,V)  $ defined by  $\mathcal I(\zeta )=I$ for all $\zeta \in \Gamma$, and $I$ is the identity operator. We now apply the Implicit Function Theorem to the function $F$ at the point $(f_0,f_0^{-1})$. Note that the differential of $F$ with respect to the variable $g$,  computed at $\psi$ (the linearised variable) is  given by the formula
$
d_gF_{|(f,g)=(f_0,f_0^{-1})} [\psi ]=\psi\cdot f_0 $ which shows that $d_gF_{|(f,g)=(f_0,f_0^{-1})}$ is bijective from $\C(\Gamma , {\mathcal{L}}(V'_{\sharp },V)     )  $ to 
$\C(\Gamma , {\mathcal{L}}(V,V)     )  $.
It follows that there exist a neighborhood $\mathcal U$ of $f_0$ in $\C(\Gamma , B(\H_0))$, a neighborhood $\mathcal V$ of $f_0^{-1}$ in $\C(\Gamma , B(\H_0))$
and an analytic map $\mathcal R$
 from $\mathcal U$ to $\mathcal V$ 
such that 
\begin{equation}\label{dini}
\{ (f,g)\in \mathcal U\times \mathcal V:\  F(f,g)=\mathcal I \}=\{ \mathcal (f,R (f)):\ f\in \mathcal U   \}\, .
\end{equation}
Consider now the curve $f_{\chi}(\zeta )=\TAc -\zeta \Jc$ defined for any  $\chi\in\mathcal{X}$ with $\| \chi-\bar \chi\| <\delta $. 
 For all $\zeta \in \Gamma$ the operator 
$\TAc-\zeta \Jc$ is invertible, hence $F(\TAc-\zeta \Jc, (\TAc-\zeta \Jc   )^{-1} )=\mathcal I$.  It follows that once we fix $f_0=f_{\chi}$ for some $\chi$ with $\|\chi -\bar \chi \|<\delta$, we can consider the corresponding map $\mathcal R$  and deduce from \eqref{dini} that $ (\TAc-\zeta \Jc   )^{-1} =\mathcal R ( \TAc-\zeta \Jc  )  $. Thus $\chi \mapsto (\TAc-\zeta \Jc   )^{-1}$ is of class $\C^r$ being the composition of the  analytic map $\mathcal R$  and the map   $\TAc-\zeta \Jc$ which is     of class $\C^r$.  Again, by our regularity assumption, we conclude that also  $ (\TAc-\zeta \Jc   )^{-1}\circ \Jc$ has the same regularity with respect to $\chi$, hence by formula \eqref{res1} the map 
$ \chi \mapsto  (\Tc-\zeta I  )^{-1}$ which takes values in  $\C (\Gamma,   {\mathcal{L}}(\H_0, \H_0)  )  $ is of class $\C^r$ or analytic. 

Finally, since the integration  on $\Gamma$  defines a linear and continuous map with respect to the integrating function, it follows that $\int_{\Gamma}  (\Tc-\zeta I   )^{-1}  d\zeta  $ is of class $\C^r$  with respect to $\chi$ for  $\| \chi-\bar \chi \|<\delta $.

%%%%%%%%%%%%%%%%%%%%%%%%%%%%%%%%%%%%%%%%%%%%%%%%%%%%%%%%%%%%%%%%%%%%%%%%%%%%%%%%%%%%%%%%%%%%%%%%%%%%%

\vspace*{5mm}
\hrule
\vspace*{3mm}

%%%%%%%%%%%%%%%%%%%%%%%%%%%%%%%%%%%%%%%%%%%%%%%%%%%%%%%%%%%%%%%%%%%%%%%%%%%%%%%%%%%%%%%%%%%%%%%%%%%%%

\end{document}